\theoremstyle{definition}
\newtheorem{definition}{Definition}[section]
\newtheorem{prop}[definition]{Proposition}
\newtheorem{lemma}[definition]{Lemma}
\newtheorem{theorem}[definition]{Theorem}
\newtheorem{cor}[definition]{Corollary}
\newtheorem{remark}[definition]{Remark}
\newtheorem{example}[definition]{Example}
\def\N{\mathbb{N}}
\def\R{\mathbb{R}}
\def\tsimkappa{\sim_{\kappa}}
\DeclareMathOperator{\Acyc}{Acyc}
\DeclareMathOperator{\C}{C}
\DeclareMathOperator{\FC}{FC}
\DeclareMathOperator{\CFC}{CFC}
\DeclareMathOperator{\GL}{GL}
\DeclareMathOperator{\Fib}{Fib}
\DeclareMathOperator{\supp}{supp}
\def\<{\langle}
\def\>{\rangle}
\newcommand{\w}{\mathsf{w}}
\def\qed{$\hfill\Box$}
\def\e{\vec{e}}
\def\r{\vec{r}}
\def\u{\mathsf{u}}
\def\w{\mathsf{w}}
\def\x{\mathbf{x}}
\def\y{\mathbf{y}}
\def\z{\vec{z}}
\def\0{\vec{0}}
\newcommand{\smc}[1]{\textrm{\textsc{#1}}}
\newcommand{\sym}{\smc{Sym}}
\newcommand{\COMMENT}[1]{}
\newcounter{comment}
\begin{document}
%%==============

\title[CFC Elements of Coxeter Groups]{On the Cyclically Fully
  Commutative Elements of Coxeter Groups}
  
\author{T.~Boothby} \address{Department of Mathematics \\ Simon Fraser
  University \\ Burnaby, BC V5A 1S6} \email{tboothby@sfa.ca}
\author{J.~Burkert} \address{Department of Mathematics \\ Harvey Mudd
  College \\ Claremont, CA 91711} \email{jeffrey.burkert@gmail.com}
\author{M.~Eichwald} \address{Department of Mathematical Sciences
  \\ University of Montana \\ Missoula, MT 59812}
\email{morgan.eichwald@gmail.com} \author{D.C.~Ernst}
\address{Mathematics Department \\ Plymouth State University
  \\ Plymouth, NH 03264} \email{dcernst@plymouth.edu}
\author{R.M. Green} \address{Department of Mathematics \\ University
  of Colorado \\ Boulder, CO 80309} \email{rmg@euclid.colorado.edu}
\author{M. Macauley} \address{Department of Mathematical Sciences
  \\ Clemson University \\ Clemson, SC 29634}
\email{macaule@clemson.edu}

\thanks{Four of us (T.B., J.B., M.E., M.M.) gratefully acknowledge
  support from National Science Foundation Grant DMS-0754486, and from
  the University of Washington.}

\keywords{bands, CFC, conjugacy, Coxeter element, Coxeter group,
  cyclic words, fully commutative, logarithmic, Matsumoto's theorem,
  root automaton, torsion-free, Tutte polynomial} \subjclass[2010]{20F55,
  05A15, 20B10}

%% 20F55 Reflection and Coxeter groups

\begin{abstract}
  Let $W$ be an arbitrary Coxeter group. If two elements have
  expressions that are cyclic shifts of each other (as words), then
  they are conjugate (as group elements) in $W$. We say that $w$ is
  \emph{cyclically fully commutative} (CFC) if every cyclic shift of
  any reduced expression for $w$ is fully commutative (i.e., avoids
  long braid relations). These generalize Coxeter elements in that
  their reduced expressions can be described combinatorially by
  acyclic directed graphs, and cyclically shifting corresponds to
  source-to-sink conversions. In this paper, we explore the
  combinatorics of the CFC elements and enumerate them in all Coxeter
  groups. Additionally, we characterize precisely which CFC elements
  have the property that powers of them remain fully commutative,
  via the presence of a simple combinatorial feature called a
  \emph{band}. This allows us to give necessary and sufficient
  conditions for a CFC element $w$ to be \emph{logarithmic}, that is,
  $\ell(w^k)=k\cdot \ell(w)$ for all $k\geq 1$, for a large class of
  Coxeter groups that includes all affine Weyl groups and simply-laced
  Coxeter groups. Finally, we give a simple non-CFC element that fails
  to be logarithmic under these conditions.
\end{abstract}

\maketitle

%% =================================================================

%%-----------------------------------------
\section{Introduction}
%%-----------------------------------------

A classic result of Coxeter groups, known as Matsumoto's
theorem~\cite{Matsumoto:64}, states that any two reduced expressions
of the same element differ by a sequence of braid relations. If two
elements have expressions that are cyclic shifts of each other (as
words), then they are conjugate (as group elements). We say that an
expression is cyclically reduced if every cyclic shift of it is
reduced, and ask the following question, where an affirmative answer
would be a ``cyclic version'' of Matsumoto's theorem.
\begin{quote}
\centering \emph{Do two cyclically reduced expressions of conjugate
  elements differ by a sequence of braid relations and cyclic shifts}?
\end{quote}
While the answer to this question is, in general, ``no,'' it seems to
``often be true,'' and understanding when the answer is ``yes'' is a
central focus of a broad ongoing research project of the last three
authors. It was recently shown to hold for all Coxeter
elements~\cite{Speyer:09,Eriksson:09}, though the result was not
stated in this manner. Key to this was establishing necessary and
sufficient conditions for a Coxeter element $w\in W$ to be
\emph{logarithmic}, that is, for $\ell(w^k)=k\cdot \ell(w)$ to hold
for all $k\geq 1$. Trying to understand which elements in a Coxeter
group are logarithmic motivated this work. Here, we introduce and
study a class of elements that generalize the Coxeter elements, in
that they share certain key combinatorial properties.

A Coxeter element is a special case of a \emph{fully commutative} (FC)
element~\cite{Stembridge:96}, which is any element with the property
that any two reduced expressions are equivalent by only short braid
relations (i.e., iterated commutations of commuting generators). In
this paper, we introduce the \emph{cyclically fully commutative} (CFC)
elements. These are the elements for which every cyclic shift of any
reduced expression is a reduced expression of an FC element. If we
write a reduced expression for a cyclically reduced element in a
circle, thereby allowing braid relations to ``wrap around the end of
the word,'' the CFC elements are those where only short braid
relations can be applied. In this light, the CFC elements are the
``cyclic version'' of the FC elements. In particular, the cyclic
version of Matsumoto's theorem for the CFC elements asks when two
reduced expressions for conjugate elements $w$ and $w'$ are equivalent
via only short braid relations and cyclic shifts. As with Coxeter
elements, the first step in attacking this problem is to find
necessary and sufficient conditions for a CFC element to be
logarithmic.

This paper is organized as follows. After necessary background
material on Coxeter groups is presented in
Section~\ref{sec:coxetergroups}, we introduce the CFC elements in
Section~\ref{sec:cfc}. We motivate them as a natural generalization of
Coxeter elements, in the sense that like Coxeter elements, they can be
associated with canonical acyclic directed graphs, and a cyclic shift
(i.e., conjugation by a generator) of a reduced expression corresponds
on the graph level to converting a source into a sink. In
Section~\ref{sec:cfc-properties}, we prove a number of combinatorial
properties of CFC elements, and introduce the concept of a
\emph{band}, which tells us precisely when powers of a CFC element
remain fully commutative (Theorem~\ref{thm:bands}). In
Section~\ref{sec:cfc-enumeration}, we enumerate the CFC elements in
all Coxeter groups, and we give a complete characterization of the CFC
elements in groups that contain only finitely many. In
Section~\ref{sec:rootaut}, we formalize the root automaton of a
Coxeter group in a new way. We then use it to prove a new result on
reducibility, which we utilize in Section~\ref{sec:affineweyl} to
establish necessary and sufficient conditions for CFC elements to be
logarithmic, as long as they have no ``large bands''
(Theorem~\ref{thm:largebands}). We conclude that in any Coxeter group
without ``large odd endpoints'' (a class of groups includes all affine
Weyl groups and simply-laced Coxeter groups) a CFC element is
logarithmic if and only if it is \emph{torsion-free}
(Corollary~\ref{cor:baby-RHD}). The CFC assumption is indeed crucial
for being logarithmic; as we conclude with a simple counterexample in
$\widetilde{C}_2$ by dropping only the CFC condition.

%%===============================================
\section{Coxeter groups}\label{sec:coxetergroups}
%%===============================================

A \emph{Coxeter group} is a group $W$ with a distinguished set of
generating involutions $S$ with presentation
\[
\<s_1,\dots,s_n\mid (s_is_j)^{m_{i,j}}=1\>\,,
\]
where $m_{i,j}:=m(s_{i},s_{j})=1$ if and only if $s_i=s_j$.  The
exponents $m(s,t)$ are called \emph{bond strengths}, and it is
well-known that $m(s,t)=|st|$. We define $m(s,t)$ to be $\infty$ if
there is no exponent $k>0$ such that $(st)^{k}=1$.  A Coxeter group is
\emph{simply-laced} if each $m(s,t)\leq 3$. If $S=\{s_1,\dots,s_n\}$,
the pair $(W,S)$ is called a \emph{Coxeter system} of \emph{rank}
$n$. A Coxeter system can be encoded by a unique \emph{Coxeter graph}
$\Gamma$ having vertex set $S$ and edges $\{s,t\}$ for each
$m(s,t)\geq 3$. Moreover, each edge is labeled with its corresponding
bond strength, although typically the labels of $3$ are omitted
because they are the most common. If $\Gamma$ is connected, then $W$
is called \emph{irreducible}.

Let $S^*$ denote the free monoid over $S$. If a word
$\w=s_{x_1}s_{x_2}\cdots s_{x_m}\in S^*$ is equal to $w$ when
considered as an element of $W$, we say that $\w$ is an
\emph{expression} for $w$. (Expressions will be written in {\sf sans
  serif} font for clarity.) If furthermore, $m$ is minimal, we say
that $\w$ is a \emph{reduced expression} for $w$, and we call $m$ the
\emph{length} of $w$, denoted $\ell(w)$. If every cyclic shift of $\w$
is a reduced expression for some element in $W$, then we say that $\w$
is \emph{cyclically reduced}. A group element $w\in W$ is cyclically
reduced if every reduced expression for $w$ is cyclically reduced.

The \emph{left descent set} of $w\in W$ is the set $D_L(w)=\{s\in
S\mid \ell(sw)<\ell(w)\}$, and the \emph{right descent set} is defined
analogously as $D_R(w)=\{s\in S\mid \ell(ws)<\ell(w)\}$. If $s\in
D_L(w)$ (respectively, $D_R(w)$), then $s$ is said to be
\emph{initial} (respectively, \emph{terminal}). It is well-known that
if $s\in S$, then $\ell(sw)=\ell(w)\pm 1$, and so $\ell(w^k)\leq
k\cdot \ell(w)$. If equality holds for all $k\in\N$, we say that $w$ is
\emph{logarithmic}.

For each integer $m\geq 0$ and distinct generators $s,t\in S$, define
\[
\<s,t\>_m=\underbrace{stst\cdots}_{m}\in S^*\,.
\]
The relation $\<s,t\>_{m(s,t)}=\<t,s\>_{m(s,t)}$ is called a
\emph{braid relation}, and is additionally called a \emph{short braid
  relation} if $m(s,t)=2$. (Some authors call
$\<s,t\>_{m(s,t)}=\<t,s\>_{m(s,t)}$ a short braid relation if
$m(s,t)=3$, and a commutation relation if $m(s,t)=2$.) The short braid
relations generate an equivalence relation on $S^*$, and the resulting
equivalence classes are called \emph{commutation classes}. If two
reduced expressions are in the same commutation class, we say they are
\emph{commutation equivalent}.  An element $w\in W$ is \emph{fully
  commutative} (FC) if all of its reduced expressions are commutation
equivalent, and we denote the set of FC elements by $\FC(W)$.  For
consistency, we say that an expression $\w \in S^*$ is FC if it is a
reduced expression for some $w\in\FC(W)$. If $\w$ is not FC, then it
is commutation equivalent to a word $\w'$ for which either $ss$ or
$\<s,t\>_{m(s,t)}$ appears as a consecutive subword, with $m(s,t)\geq
3$ (this is not immediately obvious; see
Proposition~\ref{prop:badchain}).

The braid relations generate a coarser equivalence relation on
$S^*$. Matsumoto's theorem~\cite[Theorem 1.2.2]{Geck:00} says that an
equivalence class containing a reduced expression must consist
entirely of reduced expressions, and that the set of all such
equivalence classes under this coarser relation is in 1--1
correspondence with the elements of $W$.
\begin{theorem}[Matsumoto's theorem]
  In a Coxeter group $W$, any two reduced expressions for the same
  group element differ by braid relations. \hfill\qed
\end{theorem}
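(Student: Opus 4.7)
The plan is to prove the theorem by strong induction on $\ell(w)$; the cases $\ell(w) \leq 1$ are immediate since $w$ has at most one reduced expression. So assume $\ell(w) = m \geq 2$ and the result holds for all shorter elements, and take two reduced expressions $\mathsf{w} = s_{i_1}\cdots s_{i_m}$ and $\mathsf{w}' = s_{j_1}\cdots s_{j_m}$ for $w$. If $s_{i_1} = s_{j_1}$, left-multiply by this common initial generator to obtain two reduced expressions for a shorter element; apply the inductive hypothesis to connect their tails by a sequence of braid moves, then prepend the generator to recover a chain of braid moves between $\mathsf{w}$ and $\mathsf{w}'$.

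The substantive case is when $s := s_{i_1} \neq s_{j_1} =: t$, so both $s,t \in D_L(w)$. The key auxiliary statement I would establish first is a \emph{two-descent lemma}: whenever $s,t \in D_L(w)$ with $m(s,t) < \infty$, the element $w$ admits a reduced expression of the form $\<s,t\>_{m(s,t)}\,\mathsf{v}$, and equivalently a reduced expression $\<t,s\>_{m(s,t)}\,\mathsf{v}$ with the same tail $\mathsf{v}$. Granting this, I apply the inductive hypothesis twice: $\mathsf{w}$ and $\<s,t\>_{m(s,t)}\,\mathsf{v}$ share the leading letter $s$, so after deleting it the resulting shorter expressions are braid-equivalent by induction; analogously $\mathsf{w}'$ and $\<t,s\>_{m(s,t)}\,\mathsf{v}$ are braid-equivalent. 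A single long braid relation connects $\<s,t\>_{m(s,t)}\,\mathsf{v}$ to $\<t,s\>_{m(s,t)}\,\mathsf{v}$, and concatenating the three chains connects $\mathsf{w}$ to $\mathsf{w}'$. One also verifies that the case $m(s,t) = \infty$ cannot arise here, since it would force $\langle s,t\rangle w$ to contain elements of unbounded length bounded by $\ell(w)$.

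The main obstacle, and the real content of the theorem, is the two-descent lemma. The cleanest route is through the \emph{Exchange Condition}: if $s \in D_L(w)$ and $s_{i_1}\cdots s_{i_m}$ is reduced, then $sw = s_{i_1}\cdots \widehat{s_{i_k}}\cdots s_{i_m}$ for some index $k$. I would prove the Exchange Condition from the geometric representation of $W$ on $\R^n$: simple reflections act on the Tits root system by permuting positive roots with exactly one controlled sign change, and $\ell(w)$ equals the number of positive roots that $w$ sends to negative ones. Iterating the Exchange Condition when both $s$ and $t$ lie in $D_L(w)$ builds up an alternating prefix one letter at a time until it reaches length $m(s,t)$, at which point the two alternating words $\<s,t\>_{m(s,t)}$ and $\<t,s\>_{m(s,t)}$ represent the same group element and the remaining tail $\mathsf{v}$ is forced to coincide. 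Once this geometric backbone is installed, the induction above assembles Matsumoto's theorem without further effort.
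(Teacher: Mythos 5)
The paper does not actually prove this statement: Matsumoto's theorem is quoted as classical background, with the proof deferred to the literature (Matsumoto's original paper and Geck--Pfeiffer, Theorem 1.2.2). So there is no ``paper proof'' to match; what you have written is a proof sketch of the standard Tits-style argument, and your top-level structure is the right one. The outer induction on $\ell(w)$, the reduction to the case of distinct initial letters $s\neq t$, the use of a reduced expression of $w$ beginning with $\langle s,t\rangle_{m(s,t)}$, and the assembly of the three chains of braid moves are all exactly how the classical proof goes, and the exclusion of $m(s,t)=\infty$ is handled correctly in spirit (the dihedral part of $w$ would have to contain arbitrarily long prefixes of $w$).

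The one genuine soft spot is the step you yourself identify as the real content: the two-descent lemma. Your claim that ``iterating the Exchange Condition builds up an alternating prefix one letter at a time until it reaches length $m(s,t)$'' does not work as literally stated. The exchange condition does give the first two letters: from $s,t\in D_L(w)$ and a reduced word for $w$ starting with $s$, exchange applied to $t$ shows $t\in D_L(sw)$, so $w=st\,w_2$ reduced. But the iteration then stalls, because to continue you would need two distinct descents of the quotient $w_2=tsw$ (or of $sw$), and you only know one; nothing in the exchange condition alone forces $s\in D_L(tsw)$ when $m(s,t)\geq 3$. The standard repair is either (a) the parabolic coset decomposition: every coset $W_{\{s,t\}}w$ has a unique minimal-length representative $v$, and $\ell(uv)=\ell(u)+\ell(v)$ for all $u\in W_{\{s,t\}}$; writing $w=uv$ this way, the hypotheses $s,t\in D_L(w)$ transfer to $u$, forcing $u$ to be the longest element of the dihedral subgroup, which both forces $m(s,t)<\infty$ and yields the two alternating reduced prefixes with common tail $\mathsf{v}$; or (b) a root-counting argument: since $w^{-1}\alpha_s$ and $w^{-1}\alpha_t$ are both negative, $w^{-1}$ sends every positive root of the rank-two subsystem spanned by $\alpha_s,\alpha_t$ to a negative root, so the $m(s,t)$ (or infinitely many) such roots all lie in a set of size $\ell(w)$, and the dihedral factor splits off. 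Either of these is compatible with the geometric backbone you set up (length $=$ number of inversions), so the proof is completable along your lines, but as written the crucial lemma is asserted rather than proved.
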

Now, consider an additional equivalence relation $\tsimkappa$,
generated by cyclic shifts of words, i.e.,
\begin{equation}
  \label{eq:shift}
  s_{x_1}s_{x_2}\cdots s_{x_m}\longmapsto s_{x_2}s_{x_3}\cdots
  s_{x_m}s_{x_1}\,.
\end{equation}
The resulting equivalence classes were studied in~\cite{Macauley:11c}
and are in general, finer than conjugacy classes, but they often
coincide. Determining conditions for when $\kappa$-equivalence and
conjugacy agree would lead to a ``cyclic version'' of Matsumoto's
theorem for some class of elements, and is one of the long-term
research goals of the last three authors.
\begin{definition}
  Let $W$ be a Coxeter group. We say that a conjugacy class $C$
  satisfies the \emph{cyclic version of Matsumoto's theorem} if any two
  cyclically reduced expressions of elements in $C$ differ by braid relations
  and cyclic shifts.
\end{definition}
One only needs to look at type $A_n$ (the symmetric group
$\sym_{n+1}$) to find an example of where the cyclic version of
Matsumoto's theorem fails. Any two simple generators in $A_n$ are
conjugate, e.g., $s_1s_2(s_1)s_2s_1=s_2$. However, for longer words,
such examples appear to be less common, and we would like to
characterize them.

The \emph{support} of an expression $\w\in S^*$ is simply the set of
generators that appear in it. As a consequence of Matsumoto's theorem,
it is also well-defined to speak of the support of a group element
$w\in W$, as the set of generators appearing in any reduced expression
for $w$. We denote this set by $\supp(w)$, and let $W_{\supp(w)}$ be
the (standard parabolic) subgroup of $W$ that it generates. If
$W_{\supp(w)}=W$ (i.e., $\supp(w)=S$), we say that $w$ has \emph{full
  support}. If $W_{\supp(w)}$ has no finite factors, or equivalently,
if every connected component of $\Gamma_{\supp(w)}$ (i.e., the
subgraph of $\Gamma$ induced by the support of $w$) describes an
infinite Coxeter group, then we say that $w$ is
\emph{torsion-free}. The following result is straightforward.
\begin{prop}\label{prop:logarithmic}
  Let $W$ be a Coxeter group. If $w\in W$ is logarithmic, then $w$ is
  cyclically reduced and torsion-free.
\end{prop}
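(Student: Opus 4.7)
The plan is to prove the two contrapositives independently: a non-cyclically-reduced $w$ will fail the logarithmic identity already at $k=2$, and a $w$ with a finite factor in $W_{\supp(w)}$ will fail it at a suitable power.

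Suppose first that $w$ is not cyclically reduced, so some reduced expression $\w = s_{x_1}\cdots s_{x_m}$ admits an index $j$ for which the cyclic shift $s_{x_{j+1}}\cdots s_{x_m}s_{x_1}\cdots s_{x_j}$ is not reduced. Setting $\mathsf{v} = s_{x_1}\cdots s_{x_j}$ and $\mathsf{v}' = s_{x_{j+1}}\cdots s_{x_m}$, and letting $v,v'\in W$ be the elements they represent, the reducedness of $\w = \mathsf{v}\mathsf{v}'$ combined with subadditivity of $\ell$ forces both $\mathsf{v}$ and $\mathsf{v}'$ to be reduced, whence $\ell(w) = \ell(v)+\ell(v')$; and the non-reducedness of $\mathsf{v}'\mathsf{v}$ gives $\ell(v'v) < \ell(w)$. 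Regrouping factors,
\[
w^k \;=\; (vv')^k \;=\; v\,(v'v)^{k-1}\,v',
\]
so for every $k \geq 2$,
\[
\ell(w^k) \;\leq\; \ell(v) + (k-1)\ell(v'v) + \ell(v') \;=\; \ell(w) + (k-1)\ell(v'v) \;<\; k\,\ell(w),
\]
contradicting the logarithmic hypothesis.

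Suppose next that $w$ is not torsion-free, so some union $S_1$ of connected components of $\Gamma_{\supp(w)}$ generates a finite parabolic $W_1$. Put $S_2 = \supp(w)\setminus S_1$ and $W_2 = \langle S_2\rangle$. Because $S_1$ is a union of full components, every element of $S_1$ commutes with every element of $S_2$, so $W_{\supp(w)}$ decomposes as an internal direct product $W_1\times W_2$. Applying only short braid relations to a reduced expression for $w$, the $S_1$-letters may be pushed past the $S_2$-letters to yield a factorization $w = w_1 w_2$ with $w_1\in W_1$, $w_2 \in W_2$, $w_1 w_2 = w_2 w_1$, and $\ell(w) = \ell(w_1)+\ell(w_2)$. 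Since every generator in $S_1$ must appear in every reduced expression for $w$ by definition of $\supp(w)$, we have $\ell(w_1) \geq |S_1| \geq 1$, and $w_1$ has some finite order $d$ in the finite group $W_1$. Then
\[
w^{dk} \;=\; w_1^{dk}\,w_2^{dk} \;=\; w_2^{dk},
\]
giving $\ell(w^{dk}) \leq dk\,\ell(w_2) < dk\,\ell(w)$ for every $k \geq 1$, again contradicting the logarithmic hypothesis.

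Neither step is deep; the two points that deserve care are the factorization $\w = \mathsf{v}\mathsf{v}'$ with each piece a reduced expression in its own right (immediate from $\w$ reduced and subadditivity of $\ell$), and the internal direct-product decomposition $W_{\supp(w)} = W_1\times W_2$ yielding $\ell(w)=\ell(w_1)+\ell(w_2)$, which is the standard fact that disjoint components of a Coxeter diagram generate pointwise-commuting standard parabolics intersecting trivially.
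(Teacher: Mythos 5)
Your proposal is correct and follows essentially the same route as the paper: for the cyclically reduced part, the same factorization $w=vv'$ with $\ell(v'v)<\ell(w)$ and regrouping of $w^k$ (the paper just does $k=2$), and for the torsion-free part, the same splitting $w=w_1w_2$ along a finite factor of $W_{\supp(w)}$ and raising to a multiple of the order of $w_1$. The extra care you take with $\ell(w)=\ell(w_1)+\ell(w_2)$ and $\ell(w_1)\geq 1$ is exactly what the paper's terser computation implicitly uses, so there is nothing to add.
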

\begin{proof}
  If $w$ is not cyclically reduced, then there exists a sequence of
  cyclic shifts of some reduced expression of $w$ that results in a
  non-reduced expression.  In this case, there exists $w_{1}, w_{2}\in
  W$ such that $w=w_{1}w_{2}$ (reduced) while $\ell(w_{2}w_{1})<
  \ell(w)$.  This implies that
  \[
  \ell(w^2)=\ell(w_1w_2w_1w_2)\leq\ell(w_1)+\ell(w_2w_1)+\ell(w_2)<2\ell(w)\,,
  \]
  and hence $w$ is not logarithmic. If $w$ is not torsion-free, then we
  can write $w=w_1w_2$ with every generator in $w_1$ commuting with
  every generator in $w_2$, and $0<|w_1|=k<\infty$. Now,
  \[
  \ell(w^k)=\ell(w_1^kw_2^k)=\ell(w_2^k)<k\cdot \ell(w)\,,
  \]
  and so $w$ is not logarithmic. %\qed
\end{proof}
We ask when the converse of Proposition~\ref{prop:logarithmic}
holds. In 2009, it was shown to hold for Coxeter
elements~\cite{Speyer:09}, and in this paper, we show that it holds
for all CFC elements that lack a certain combinatorial feature called
a ``large band.'' As a corollary, we can conclude that in any group
without ``large odd endpoints,'' a CFC element is logarithmic if and
only if it is torsion free. This class of groups includes all affine Weyl
groups and simply-laced Coxeter groups. Additionally, we give a simple
counterexample when the CFC condition is dropped.

%%------------------------------------------------------------------------
\section{Coxeter and cyclically fully commutative elements}\label{sec:cfc}
%%------------------------------------------------------------------------

A common example of an FC element is a \emph{Coxeter element}, which
is an element for which every generator appears exactly once in each
reduced expression.  The set of Coxeter elements of $W$ is denoted by
$\C(W)$.  As mentioned at the end of the previous section, the converse of Proposition~\ref{prop:logarithmic} holds for
Coxeter elements, and this follows easily from a recent result
in~\cite{Speyer:09} together with the simple fact that Coxeter elements are
trivially cyclically reduced.
\begin{theorem}\label{thm:speyer}
  In any Coxeter group, a Coxeter element is logarithmic if and only
  if it is torsion-free.
\end{theorem}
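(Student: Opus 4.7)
The plan is to prove both implications separately. The forward direction, that logarithmic implies torsion-free, is already contained in Proposition~\ref{prop:logarithmic}, so no work is needed there; the entire content of the theorem lies in the converse. Note also that because a Coxeter element $c$ has each generator appearing exactly once in any reduced expression, every cyclic shift of such an expression is again a product in which each generator appears once, hence still reduced. Thus $c$ is automatically cyclically reduced, which is why the hypothesis ``torsion-free'' alone (rather than the stronger ``cyclically reduced and torsion-free'' from Proposition~\ref{prop:logarithmic}) is enough to entertain the converse.

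For the converse, suppose $c\in\C(W)$ is torsion-free. Since $c$ is a Coxeter element, $\supp(c)=S$, so torsion-freeness here means that every connected component of the Coxeter graph $\Gamma$ describes an infinite Coxeter group. I would decompose $W$ as a direct product $W=W_1\times\cdots\times W_m$ corresponding to the connected components $\Gamma_1,\dots,\Gamma_m$ of $\Gamma$, with each $W_i$ an irreducible infinite Coxeter group. Since generators in different components commute, short braid relations allow us to rewrite any reduced expression for $c$ as a product $c=c_1c_2\cdots c_m$ of pairwise commuting factors $c_i\in\C(W_i)$.

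The crucial ingredient is Speyer's result~\cite{Speyer:09}: in any infinite irreducible Coxeter group, every power of a Coxeter element is reduced. Applying this to each $c_i$ gives $\ell(c_i^k)=k\cdot\ell(c_i)$ for all $k\geq 1$. Because the $c_i$ live in pairwise commuting standard parabolic subgroups of $W$, lengths add across the factorization, so
\[
\ell(c^k)=\ell(c_1^k c_2^k\cdots c_m^k)=\sum_{i=1}^m\ell(c_i^k)=k\sum_{i=1}^m\ell(c_i)=k\cdot\ell(c),
\]
and $c$ is logarithmic, as desired.

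The only real obstacle is that Speyer's theorem is stated only in the infinite irreducible case, so one must perform the (routine) reduction to that case via the direct product decomposition above; the rest is bookkeeping with the standard fact that lengths are additive over commuting parabolic subgroups.
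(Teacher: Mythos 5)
Your proposal is correct and follows essentially the same route as the paper: the forward direction via Proposition~\ref{prop:logarithmic}, and the converse by splitting the torsion-free Coxeter element into Coxeter elements of the infinite irreducible components, applying Speyer's theorem to each, and using additivity of length over commuting factors.
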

\begin{proof}
  The forward direction is immediate from
  Proposition~\ref{prop:logarithmic}. For the converse, if $c\in\C(W)$
  is torsion-free, then $c=c_1c_2\cdots c_m$, where each $c_i$ is a
  Coxeter element of an infinite irreducible parabolic subgroup
  $W_{\supp(c_i)}$. Theorem 1 of \cite{Speyer:09} says that in an infinite
  irreducible Coxeter group, Coxeter elements are logarithmic, and it
  follows that for any $k\in\N$,
  \[
  \ell(c^k)=\ell(c_1^k\cdots c_m^k) =\ell(c_1^k)+\cdots+\ell(c_m^k)
  =k\cdot \ell(c_1)+\cdots+k\cdot \ell(c_m)=k\cdot \ell(c)\,,
  \]
  and hence $c$ is logarithmic. %\qed
\end{proof}
The proof of Theorem 1 of~\cite{Speyer:09} is combinatorial, and
relies on a natural bijection between the set $\C(W)$ of Coxeter
elements and the set $\Acyc(\Gamma)$ of acyclic orientations of the
Coxeter graph. Specifically, if $c\in\C(W)$, let $(\Gamma,c)$ denote
the graph where the edge $\{s_i,s_j\}$ is oriented as $(s_i,s_j)$ if
$s_i$ appears before $s_j$ in $c$. (Some authors reverse this
convention, orienting $\{s_i,s_j\}$ as $(s_i,s_j)$ if $s_i$ appears
after $s_j$ in $c$.)  The vertex $s_{x_i}$ is a source (respectively,
sink) of $(\Gamma,c)$ if and only if $s_{x_i}$ is initial
(respectively, terminal) in $c$. Conjugating a Coxeter element
$c=s_{x_1}\cdots s_{x_n}$ by $s_{x_1}$ cyclically shifts the word to
$s_{x_2}\cdots s_{x_n}s_{x_1}$, and on the level of acyclic
orientations, this corresponds to converting the source vertex
$s_{x_1}$ of $(\Gamma,c)$ into a sink, which takes the orientation
$(\Gamma,c)$ to $(\Gamma,s_{x_1}cs_{x_1})$. This generates an
equivalence relation $\tsimkappa$ on $\Acyc(\Gamma)$ and on $\C(W)$,
which has been studied recently in~\cite{Macauley:11c}. Two acyclic
orientations $(\Gamma,c)$ and $(\Gamma,c')$ are $\kappa$-equivalent if
and only if there is a sequence $x_1,\dots,x_k$ such that
$c'=s_{x_k}\cdots s_{x_1}cs_{x_1}\cdots s_{x_k}$ and $s_{x_{i+1}}$ is
a source vertex of $(\Gamma,s_{x_i}\cdots s_{x_1}cs_{x_1}\cdots
s_{x_i})$ for each $i=1,\dots,k-1$.  Thus, two Coxeter elements
$c,c'\in\C(W)$ are $\kappa$-equivalent if they differ by a sequence of
length-preserving conjugations, i.e., if they are conjugate by a word
$w=s_{x_1}\cdots s_{x_k}$ such that
\[
\ell(c)=\ell(s_{x_i}\cdots s_{x_1}cs_{x_1}\cdots s_{x_i})
\]
holds for each $i=1,\dots,k$. Though this is in general a stronger
condition than just conjugacy, the following recent result by
H.~Eriksson and K.~Eriksson shows that they are equivalent for Coxeter
elements, thus establishing the cyclic version of Matsumoto's theorem
for Coxeter elements.
\begin{theorem}[Eriksson--Eriksson \cite{Eriksson:09}]
  \label{thm:conjugacy}
  Let $W$ be a Coxeter group and $c,c'\in\C(W)$. Then $c$ and $c'$ are
  conjugate if and only if $c\tsimkappa c'$. %\qed
\end{theorem}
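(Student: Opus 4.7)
The forward implication is immediate from the definition of $\tsimkappa$. Each elementary step of $\kappa$-equivalence is a conjugation by an initial generator, so if $c \tsimkappa c'$ then $c' = s_{x_k} \cdots s_{x_1} c s_{x_1} \cdots s_{x_k}$ for some sequence, exhibiting the two elements as conjugate in $W$.

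For the reverse implication, my plan is to work through the bijection $\C(W) \longleftrightarrow \Acyc(\Gamma)$, where $\tsimkappa$ corresponds to source-to-sink flips of acyclic orientations. The problem is thus reduced to showing that if $(\Gamma,c)$ and $(\Gamma,c')$ correspond to conjugate Coxeter elements, they lie in the same source-to-sink equivalence class. I would induct on $\ell(w)$, where $w$ is a conjugator of minimum length with $c' = wcw^{-1}$. Writing $w = s_1 s_2 \cdots s_k$ in reduced form, the aim is to show that every intermediate conjugate $c_i := (s_i \cdots s_1) c (s_1 \cdots s_i)$ is itself a Coxeter element, and that passing from $c_{i-1}$ to $c_i$ is a single source-to-sink operation on the associated acyclic orientation.

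The key local lemma I would establish first is: \emph{if $c \in \C(W)$ and $scs \in \C(W)$ for some $s \in S$, then either $s \notin \supp(c)$ and $scs = c$, or $s$ is initial in $c$ and $scs$ is the image of $c$ under the source-to-sink conversion at $s$.} This follows from a direct reduced-word analysis: in a reduced expression for $c$, each generator of $\supp(c)$ appears exactly once, so the word $scs$ has $s$ occurring three times (or zero times) and can only collapse back to a Coxeter expression when the outer $s$'s can be absorbed via the action of making $s$ initial and then cycling it to the end. Granting this lemma and the claim that intermediate conjugates are Coxeter elements, the inductive step is straightforward: each $c_{i-1} \tsimkappa c_i$, so $c \tsimkappa c'$ by transitivity.

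The main obstacle is showing that intermediate conjugates $c_i$ really are Coxeter elements when $w$ is chosen minimal. A priori, conjugating a Coxeter element by a single generator can easily change its length, so in general we only know that the endpoints $c_0 = c$ and $c_k = c'$ have length equal to the rank. The resolution I would pursue uses the strong exchange condition: if some intermediate $c_i$ fails to be a Coxeter element, then the corresponding conjugation by $s_{i+1}$ must still return to a Coxeter expression eventually, and one shows this can only happen if a reflection can be removed from $w$ to yield a strictly shorter conjugator, contradicting minimality. This step is where the real combinatorial work lies; making it rigorous is essentially the content of the argument of Eriksson and Eriksson~\cite{Eriksson:09}, and I would invoke their result rather than re-deriving the detailed root-system bookkeeping. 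Once all intermediate conjugates are known to be Coxeter elements, the local lemma above chains them together into a $\tsimkappa$-sequence from $c$ to $c'$.
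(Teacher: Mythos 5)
The paper offers no proof of this statement at all: it is quoted directly from Eriksson and Eriksson \cite{Eriksson:09}, so the ``paper's proof'' is simply the citation. Read as a justification by citation, your write-up therefore matches the paper. Read as an actual proof, however, it has a gap located exactly at the theorem's core. The step you defer --- that a minimal-length conjugator $w$ with $c'=wcw^{-1}$ can be traversed so that every intermediate conjugate $c_i=(s_i\cdots s_1)c(s_1\cdots s_i)$ is again a Coxeter element, equivalently that conjugate Coxeter elements are linked by length-preserving conjugations --- is not an outsourceable technicality; it \emph{is} the content of the Eriksson--Eriksson theorem. The paper itself emphasizes this when it notes that $\tsimkappa$ is ``in general a stronger condition than just conjugacy,'' and the actual argument in \cite{Eriksson:09} does not run through a short strong-exchange manipulation: it hinges on torsion-free Coxeter elements being logarithmic (Speyer's theorem), which is precisely why the present paper develops the logarithmic property as the route to cyclic versions of Matsumoto's theorem. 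So your sketch, minus the citation, does not constitute a proof; with the citation, it is circular as an independent argument, though acceptable as the same citation-based treatment the paper uses.

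Two smaller points. Your ``local lemma'' is misstated: if $s\in\supp(c)$ and $scs\in\C(W)$, then $s$ may be \emph{terminal} rather than initial in $c$, in which case $scs$ arises from a sink-to-source flip; this is harmless for the chaining argument (it is the inverse move, so it stays within the same $\tsimkappa$-class), but the lemma as written is false without that case. The forward implication is fine as stated.
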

It is well-known (see~\cite{Stanley:73}) that
$|\!\Acyc(\Gamma)|=T_\Gamma(2,0)$, where $T_\Gamma$ is the Tutte
polynomial~\cite{Tutte:54} of $\Gamma$.  In~\cite{Macauley:08b}, it
was shown that for any undirected graph $\Gamma$, there are exactly
$T_\Gamma(1,0)$ $\kappa$-equivalence classes in
$\Acyc(\Gamma)$. Applying this to Theorem~\ref{thm:conjugacy}, we get
the following result.

\begin{cor}
  In any Coxeter group $W$, the $T_\Gamma(2,0)$ Coxeter elements fall
  into exactly $T_\Gamma(1,0)$ conjugacy classes, where $T_\Gamma$ is
  the Tutte polynomial. \hfill\qed
\end{cor}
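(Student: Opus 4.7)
The plan is to chain together the four ingredients that have just been set up in the text, namely: (i) the bijection $c \mapsto (\Gamma,c)$ between Coxeter elements $\C(W)$ and acyclic orientations $\Acyc(\Gamma)$; (ii) Stanley's formula $|\Acyc(\Gamma)|=T_\Gamma(2,0)$; (iii) the Macauley result that the number of $\kappa$-equivalence classes on $\Acyc(\Gamma)$ under source-to-sink conversions equals $T_\Gamma(1,0)$; and (iv) the Eriksson--Eriksson theorem (Theorem~\ref{thm:conjugacy}) identifying conjugacy with $\kappa$-equivalence on $\C(W)$.

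First, I would invoke (i) and (ii) directly: since $c \leftrightarrow (\Gamma,c)$ is a bijection between $\C(W)$ and $\Acyc(\Gamma)$, Stanley's theorem gives $|\C(W)| = |\Acyc(\Gamma)| = T_\Gamma(2,0)$, which accounts for the first count in the statement.

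Next, I would verify that the bijection is equivariant with respect to the two $\kappa$-equivalence relations. The paper already records that conjugating $c = s_{x_1}\cdots s_{x_n}$ by the initial generator $s_{x_1}$ yields $s_{x_1} c s_{x_1}$, whose associated orientation is obtained from $(\Gamma,c)$ by converting the source $s_{x_1}$ into a sink. Iterating this observation shows that $c \tsimkappa c'$ in $\C(W)$ if and only if $(\Gamma,c) \tsimkappa (\Gamma,c')$ in $\Acyc(\Gamma)$, so the bijection descends to a bijection on $\kappa$-equivalence classes. By (iii), the number of such classes on the graph side is $T_\Gamma(1,0)$.

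Finally, I would apply Theorem~\ref{thm:conjugacy} to replace $\kappa$-equivalence on $\C(W)$ with ordinary conjugacy. This identifies the conjugacy classes of Coxeter elements with the $\kappa$-classes in $\Acyc(\Gamma)$, giving the count $T_\Gamma(1,0)$ and completing the corollary. There is no real obstacle here: every ingredient is either quoted from the literature or explicitly stated in the preceding discussion, so the proof is essentially a short bookkeeping exercise combining the bijection, Stanley, Macauley, and Eriksson--Eriksson.
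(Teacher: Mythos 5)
Your proposal is correct and follows the same route as the paper, which obtains the corollary by combining the bijection $\C(W)\leftrightarrow\Acyc(\Gamma)$ with Stanley's count $T_\Gamma(2,0)$, the result of Macauley--Mortveit that $\Acyc(\Gamma)$ has exactly $T_\Gamma(1,0)$ $\kappa$-classes, and Theorem~\ref{thm:conjugacy} identifying $\kappa$-equivalence with conjugacy for Coxeter elements. Your explicit check that the bijection is equivariant for the source-to-sink operation is a detail the paper leaves implicit, but the argument is the same.
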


The proof of Theorem~\ref{thm:conjugacy} hinges on torsion-free
Coxeter elements being logarithmic, and as mentioned, the proof of
this involves combinatorial properties of the acyclic orientation
construction and source-to-sink equivalence relation. Thus, we are
motivated to extend these properties to a larger class of
elements. Indeed, the acyclic orientation construction above
generalizes to the FC elements. If $w\in\FC(W)$, then $(\Gamma,w)$ is
the graph where the vertices are the disjoint union of letters in any
reduced expression of $w$, and a directed edge is present for each
pair of noncommuting letters, with the orientation denoting which
comes first in $w$. Since $w\in\FC(W)$, the graph $(\Gamma,w)$ is
well-defined. Though the acyclic orientation construction extends from
$\C(W)$ to $\FC(W)$, the source-to-sink operation does not. The
problem arises because a cyclic shift of a reduced expression for an
FC element need not be FC. This motivates the following definition.
\begin{definition}
  An element $w\in W$ is \emph{cyclically fully commutative} (CFC) if
  every cyclic shift of every reduced expression for $w$ is a reduced
  expression for an FC element.
\end{definition}
We denote the set of CFC elements of $W$ by $\CFC(W)$. They are
precisely those whose reduced expressions, when written in a circle,
avoid $\<s,t\>_m$ subwords for $m=m(s,t)\geq 3$, and as such they are the
elements for which the source-to-sink operation extends in a
well-defined manner. However, acyclic directed graphs are not
convenient to capture this generalization -- they are much better
handled as periodic heaps~\cite{Green:07}.

\begin{example}\label{ex:ex1}
  Here are some examples and non-examples of CFC elements.  We will
  return to examples \ref{ex:affine-E6} and \ref{ex:affine-C4} at the
  end of Section~\ref{sec:affineweyl}.
 \begin{enumerate}[label=(\roman*)]
  \item Any Coxeter element is an example of a CFC element,
    because Coxeter elements are FC, and any cyclic
    shift of a Coxeter element is also a Coxeter element.
  \item Consider the Coxeter group of type $A_3$ with generators $s_1,
    s_2, s_3$ labeled so that $s_1$ and $s_3$ commute. The element
    $s_2 s_1 s_3 s_2$ is a reduced expression for an FC element
    $w$. However, $w$ is not cyclically reduced because the above
    expression has a cyclic shift $s_2 s_2 s_1 s_3$ that reduces to
    $s_{1}s_{3}$, and so $w$ is not CFC.
  \item The Coxeter group of type $\widetilde{A}_2$ has
    generators $s_1,s_2,s_3$ with $m(s_i,s_j)=3$ for $i\neq j$. The
    element $s_1s_3s_1s_2$ is cyclically reduced but not FC, because
    $s_1s_3s_1s_2=s_3s_1s_3s_2$. If we increase the bond strength
    $m(s_1,s_3)$ from $3$ to $\infty$, it becomes FC. However, it is
    still not CFC because conjugating it by $s_1$ yields the element
    $s_3s_1s_2s_1=s_3s_2s_1s_2$.
  \item \label{ex:affine-E6} Next, consider the affine Weyl group of
    type $\widetilde{E}_6$ (see Figure~\ref{fig:affine-E6}).
    \begin{figure}[ht]
      \centering
      \begin{tikzpicture}[scale=1.0]
        \draw[fill=black] \foreach \y in {3.75,4.5} {(3,\y) circle (2pt)};
        \draw[fill=black] \foreach \x in {1,2,...,5} {(\x,3) circle (2pt)};
        \draw {(1,3) node[label=below:$s_1$]{}
          (2,3) node[label=below:$s_2$]{}
          (3,3) node[label=below:$s_3$]{}
          (4,3) node[label=below:$s_4$]{}
          (5,3) node[label=below:$s_5$]{}
          (3,3.75) node[label=right:$s_6$]{}
          (3,4.5) node[label=right:$s_0$]{}
          [-] (1,3) -- (5,3)
          [-] (3,3) -- (3,4.5)
        };
      \end{tikzpicture}
      \caption{The Coxeter graph of type $\widetilde{E}_6$.}
      \label{fig:affine-E6}
    \end{figure}
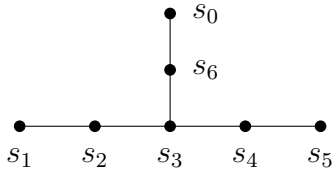
    The element $w=s_1s_3s_2s_4s_3s_5s_4s_6s_0s_3s_2s_6$ is a CFC
    element of $W(\widetilde{E}_6)$, and it turns out that $w$ is logarithmic.
  \item \label{ex:affine-C4} Now, consider the affine Weyl group of
    type $\widetilde{C}_{4}$ (see Figure~\ref{fig:affine-C4}).
    \begin{figure}[ht]
      \centering
      \begin{tikzpicture}[scale=1.0]
        \draw[fill=black] \foreach \x in {1,2,...,5} {(\x,3) circle (2pt)};
        \draw {(1,3) node[label=below:$s_0$]{}
          (2,3) node[label=below:$s_1$]{}
          (3,3) node[label=below:$s_2$]{}
          (4,3) node[label=below:$s_3$]{}
          (5,3) node[label=below:$s_4$]{}
          [-] (1,3) -- (5,3)
        };
        \draw (1.5,3) node[label=above:$4$]{};
        \draw (4.5,3) node[label=above:$4$]{};
      \end{tikzpicture}
      \caption{The Coxeter graph of type $\widetilde{C}_4$.}
      \label{fig:affine-C4}
    \end{figure}
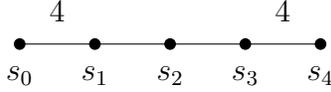
    Let $w_{1}=s_{0}s_{2}s_{4}s_{1}s_{3}$ and
    $w_{2}=s_{0}s_{1}s_{2}s_{3}s_{4}s_{3}s_{2}s_{1}$ be elements in
    $W(\widetilde{C}_{4})$.  It is quickly seen that both elements are
    CFC with full support, and as we shall be able to prove later,
    both $w_{1}$ and $w_{2}$ are logarithmic.
 \end{enumerate}
  
\end{example}

%%============================================================
\section{Properties of CFC elements}\label{sec:cfc-properties}
%%============================================================

In this section, we will prove a series of results establishing some
basic combinatorial properties of CFC elements. Of particular interest
are CFC elements whose powers are not FC, and we give a complete
characterization of these elements in any Coxeter group. Unless
otherwise stated, $(W,S)$ is assumed to be an arbitrary Coxeter
system. Recall that an expression $\w$ not being FC means that ``$\w$
is not a reduced expression for an FC element,'' i.e., it is either
non-reduced, or it is a reduced expression of a non-FC element. By
Matsumoto's theorem, if $\w\in S^*$ is a reduced expression for a
logarithmic element $w\in W$, then (the group element) $w^k$ is FC if
and only if (the expression) $\w^k$ is FC.

\begin{prop}
  \label{prop:shift}
  If $\w$ is a reduced expression of a non-CFC element of $W$, then
  some cyclic shift of $\w$ is not FC.
  \end{prop}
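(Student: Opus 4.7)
I would prove the contrapositive: assume every cyclic shift of $\w$ is FC, and show that $w$ is CFC, i.e., every cyclic shift of every reduced expression of $w$ is FC. Since $\w$ itself is FC (being its own trivial cyclic shift), $w$ lies in $\FC(W)$; hence by Matsumoto's theorem together with the definition of an FC element, any other reduced expression $\w'$ of $w$ is related to $\w$ by a chain of short braid relations (commutations). By induction on the length of such a chain, it suffices to show that the property ``every cyclic shift is FC'' is preserved under a single commutation.

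So let $\w_1 = \alpha t s \beta$ have all cyclic shifts FC, and let $\w_2 = \alpha s t \beta$ be obtained from $\w_1$ by the commutation $ts \leftrightarrow st$ (with $m(s,t)=2$). Parameterizing cyclic shifts by cut position $k \in \{0, \ldots, m-1\}$ (the shift at cut $k$ being the suffix of length $m-k$ followed by the prefix of length $k$), for every $k \neq |\alpha|+1$ the cyclic shift of $\w_2$ at $k$ differs from the cyclic shift of $\w_1$ at $k$ by a single commutation applied strictly in the interior or at one end of the expression. Since applying a commutation to a reduced expression of an FC element yields another reduced expression of the same FC element, FC-ness transfers, handling all these cases.

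The delicate case is $k = |\alpha|+1$, where the cut falls strictly between the swapped commuting letters. Here the $\w_1$-shift is $s \beta \alpha t$ (FC by hypothesis) while the $\w_2$-shift is $t \beta \alpha s$; the letters $s$ and $t$ have been swapped at opposite endpoints, which is not realizable by any interior commutation. To attack this case, I would first exploit the already-handled cuts $k = |\alpha|$ and $k = |\alpha|+2$, which give that both $s t \beta \alpha$ and $\beta \alpha s t$ are FC. Setting $y := s t \beta \alpha$, left-descent removal ($s \in D_L(y)$) yields that $sy = t \beta \alpha$ is FC, and symmetrically $s \beta \alpha$ is FC. It remains to show (i) that $t \beta \alpha s$ is reduced, equivalently $s \notin D_R(t \beta \alpha)$, and (ii) that its group element is FC.

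The main obstacle is step (ii): even once reducedness is established, right-multiplication of an FC element by a simple reflection can easily produce a non-FC element (for instance $s_1 s_2$ is FC in type $A_2$ but $s_1 s_2 s_1$ is reduced yet not FC), so one cannot conclude by descent combinatorics alone. The expected tool is the periodic-heap viewpoint of~\cite{Green:07}: the cyclic arrangements of $\w_1$ and $\w_2$ give the same periodic heap because $s$ and $t$ commute, and the swap $s \leftrightarrow t$ across the cyclic boundary is a symmetry of that heap preserving both reducedness and FC-status of linear representatives. Alternatively, one can rule out any bad subword $\langle s_i, s_j \rangle_{m_{ij}}$ with $m_{ij} \geq 3$ in any word commutation-equivalent to $t \beta \alpha s$ by combining Proposition~\ref{prop:badchain} with the already-known FC-ness of $s \beta \alpha t$, $t \beta \alpha$, $s \beta \alpha$, $s t \beta \alpha$, and $\beta \alpha s t$, obtaining a contradiction.
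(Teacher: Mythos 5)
Your overall strategy --- reduce to showing that the property ``every cyclic shift is FC'' is preserved under a single commutation, and observe that the only problematic cut is the one falling between the two swapped commuting letters --- is sound, and it is essentially the same skeleton as the paper's argument (the paper phrases it as a minimal-counterexample induction on the number of commutations relating $\w$ to the witnessing reduced expression, rather than your contrapositive chain, but the content is the same). The trouble is that the straddling case is not an edge case; it is the entire content of the proposition, and your proposal does not actually prove it. The ``periodic heap'' appeal begs the question: asserting that the cyclic arrangement of letters determines reducedness and FC-status of all linear cuts, and that swapping $s$ and $t$ across the cyclic boundary is a harmless symmetry, is precisely the cyclic invariance you are trying to establish; nothing in \cite{Green:07} is quoted that delivers it for arbitrary (not yet known to be CFC) elements. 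Your ``alternative'' route is likewise only a statement of the goal (``rule out any bad subword \ldots obtaining a contradiction'') with no mechanism for the contradiction, and even the preliminary step (i), reducedness of $t\beta\alpha s$, is left open. Knowing that $s\beta\alpha t$, $t\beta\alpha$, $s\beta\alpha$, $st\beta\alpha$ and $\beta\alpha st$ are FC does not by itself control what happens when a bad subword in some word commutation equivalent to $t\beta\alpha s$ uses both boundary letters.

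For comparison, the paper resolves exactly this case by a different maneuver: working from the non-FC side, it takes the offending expression $\u$ (commutation equivalent to one containing $ss$ or $\langle s,t\rangle_{m(s,t)}$), arranges via a WLOG (replacing $\w$ by $\w^{-1}$ if needed) that the letter involved in the last, cut-straddling commutation is one of the letters of that bad subword sitting just left of the cut, and then performs \emph{one additional} cyclic shift, so that the swapped pair ends up at the end of the word. The key observation is that the letter thereby moved from the front of $\u$ to the back is \emph{not} one of the two letters $s,t$ of the bad subword, so the bad subword survives; this produces a non-FC cyclic shift reachable with one fewer commutation, contradicting minimality. Some argument of this kind --- explicitly tracking how the offending $ss$ or $\langle s,t\rangle_{m(s,t)}$ subword interacts with the letters adjacent to the cut --- is what your write-up is missing, and without it the proof is incomplete at its crucial point.
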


\begin{proof}
If $\w$ is a reduced expression for a non-CFC element of $w\in W$,
then by definition, a sequence of $i$ cyclic shifts of some reduced
expression $\w'=s_{x_1}\cdots s_{x_m}$ for $w$ produces an expression
$\u=s_{x_{i+1}}\cdots s_{x_m}s_{x_1}\cdots s_{x_i}$ that is either not
reduced, or is a reduced expression for a non-FC element. We may
assume that $w$ itself is FC, otherwise the result is trivial. Thus,
we can obtain $\w'$ from $\w$ via a sequence of $k$ commutations, and
we may take $k$ to be minimal. The result we seek amounts to proving
that $k=0$. By assumption, the expression $\u$ is equivalent via
commutations to one containing either (a) $ss$ or (b)
$\<s,t\>_{m(s,t)}$ as a consecutive subword, where $m(s,t)\geq 3$. For
sake of a contradiction, assume that $k>0$. If the $k$th commutation
(the one that yields $\w'$) does not involve a swap of the letters in
the $i$th and $(i+1)$th positions, then we can simply remove this
commutation from our sequence, because these two letters will be
consecutive in $\u$, and they can be transposed after the cyclic
shifts.  But this contradicts the minimality of $k$.  So, the $k$th
commutation occurs in positions $i$ and $i+1$, sending an expression
$\w''$ to $\w'$, that is,
\begin{equation*}
  \label{eq:kth-commutation}
  \w''=s_{x_1}\cdots s_{x_{i-1}}s_{x_{i+1}}s_{x_{i}}s_{x_{i+2}}\cdots
  s_{x_{m}}\longmapsto s_{x_1}\cdots
  s_{x_{i-1}}s_{x_i}s_{x_{i+1}}s_{x_{i+2}}\cdots s_{x_{m}}=\w'\,.
\end{equation*}
Similarly, if this commutation does not involve one of the generators
in either case (a) or (b), then omitting this commutation before
cyclically shifting still yields an expression that is not FC.  Again,
this contradicts the minimality of $k$, so it must be the case that
the $k$th commutation involves $s$ in case (a) or, without loss of
generality, $s$ in case (b). Moreover, we may assume without loss of
generality that $s_{x_i}=s$, which is in the $(i+1)$th position of
$\w''$ (otherwise, we could have considered $\w^{-1}$, which is
reduced if and only if $\w$ is reduced). Now, apply $i+1$ cyclic
shifts to $\w''$, which yields the element
\[
s_{x_{i+2}}\cdots s_{x_m}s_{x_1}\cdots s_{x_{i-1}}s_{x_{i+1}}s_{x_i}
=s_{x_{i+2}}\cdots s_{x_m}s_{x_1}\cdots s_{x_{i-1}}s_{x_i}s_{x_{i+1}}\in W\,.
\]
Note that this second expression is a single cyclic shift of $\u$. Since $\u$ is commutation equivalent to an expression containing either $ss$ or $\<s,t\>_{m(s,t)}$ as a subword, moving $s_{x_{i+1}}$ (which cannot be $s$ or $t$) from the front of $\u$ to the back does not destroy this property. Thus, we can obtain an expression that is not FC from $\w$ by applying $k-1$ commutations before cyclically shifting, contradicting the minimality of $k$ and completing the proof. %\qed
\end{proof}

\begin{prop}\label{prop:badchain}
  Let $\w$ be an expression that is not FC. Then $\w$ is commutation
  equivalent to an expression of the form $\w_1 \w_2 \w_3$, where
  either $\w_2 = ss$ for some $s \in S$, or $\w_2=\<s,t\>_{m(s,t)}$
  for $m(s,t)\geq 3$.
\end{prop}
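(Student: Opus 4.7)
The plan is to induct on $\ell(\w)$. The base cases $\ell(\w)\le 1$ are vacuous, since a word of length at most one is automatically reduced and FC. For the inductive step, I split on whether $\w$ is reduced, handling the two distinct ways $\w$ can fail to be FC separately.

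First suppose $\w$ is reduced but represents a non-FC group element $w \in W$. By the definition of FC, there exists a reduced expression $\w'$ for $w$ that is not commutation equivalent to $\w$. Matsumoto's theorem supplies a chain of braid relations $\w = \w^{(0)}, \w^{(1)}, \ldots, \w^{(k)} = \w'$. Not every step in this chain can be a short braid relation, for otherwise $\w$ and $\w'$ would be commutation equivalent. Let $i$ be the smallest index for which $\w^{(i)} \to \w^{(i+1)}$ is a long braid relation based on generators $s,t$ with $m(s,t)\ge 3$. By minimality of $i$, all earlier steps are commutations, so $\w$ is commutation equivalent to $\w^{(i)}$. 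Since a long braid move on $s,t$ was applicable at $\w^{(i)}$, that expression must contain $\<s,t\>_{m(s,t)}$ as a consecutive subword, and the required decomposition is exhibited.

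Now suppose $\w = s_1 s_2 \cdots s_n$ is not reduced. Let $k$ be the smallest index for which the prefix $s_1 \cdots s_k$ is not reduced; then $u := s_1 \cdots s_{k-1}$ is reduced and $s_k \in D_R(u)$. Standard descent theory supplies a reduced expression for the group element $u$ ending in $s_k$, say $t_1 \cdots t_{k-2} s_k$. If $u$ is FC, then all its reduced expressions are commutation equivalent, so the commutations that carry $s_1 \cdots s_{k-1}$ to $t_1 \cdots t_{k-2} s_k$ (which act entirely on the prefix) produce an expression commutation equivalent to $\w$ containing the consecutive subword $s_k s_k$, namely $t_1 \cdots t_{k-2} s_k s_k s_{k+1} \cdots s_n$. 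Otherwise $u$ is not FC, and since $\ell(u) < \ell(\w)$ we invoke the inductive hypothesis to write $u$ as commutation equivalent to $\u_1 \u_2 \u_3$ of the required form; appending $s_k \cdots s_n$ to the right of $\u_3$ extends the decomposition to all of $\w$.

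The main point requiring care is that in the non-reduced case, the commutations applied to the prefix do not interact with the suffix $s_k \cdots s_n$, so the commutation equivalence on the shorter word lifts to one on $\w$. The other delicate step is the first case, where the choice of a chain of braid relations combined with Matsumoto's theorem forces the minimal-index non-commutation step to occur in an expression commutation equivalent to $\w$; once this is observed, the long-braid subword is extracted immediately.
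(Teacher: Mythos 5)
Your proof is correct. Note, however, that the paper itself offers no argument for this proposition: its ``proof'' is a one-line citation to Stembridge's Proposition 3.3, so your write-up is a genuinely different (self-contained) route rather than a reproduction of the paper's. Your two-case structure is sound: when $\w$ is reduced but the element is not FC, you take a Matsumoto chain to a reduced expression $\w'$ not commutation equivalent to $\w$ (such a $\w'$ exists by transitivity of commutation equivalence, a point worth stating) and extract the first long braid move, whose applicability forces a consecutive $\<s,t\>_{m(s,t)}$ in an expression commutation equivalent to $\w$; when $\w$ is not reduced, the first non-reduced prefix gives $s_k\in D_R(u)$ for the reduced prefix $u$, and the dichotomy ``$u$ FC'' (producing $s_ks_k$ via commutation equivalence of the prefix, which indeed lifts to $\w$ since commutation equivalence is a congruence on $S^*$) versus ``$u$ not FC'' (induction on word length --- be explicit that the induction is on the length of the expression, not $\ell(w)$; in fact here you could simply reapply the first case to the shorter reduced prefix and dispense with induction altogether) covers everything. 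What each approach buys: the citation keeps the paper short and leans on Stembridge's heap machinery, which yields this statement alongside stronger structural facts about FC elements; your argument is elementary and self-contained, using only Matsumoto's theorem and the standard descent/exchange property, and makes transparent exactly where the two subword types $ss$ and $\<s,t\>_{m(s,t)}$ come from (non-reducedness and genuine long braid moves, respectively).
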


\begin{proof}
  This is a restatement of Stembridge's \cite[Proposition
    3.3]{Stembridge:96}. We remark that $\w_{1}$ or $\w_{3}$ could be
  empty. %\qed
\end{proof}

\begin{lemma}
  \label{lem:w^2}
  Let $w\in W$ be logarithmic. If $w^2$ is FC (respectively, CFC),
  then $w^k$ is FC (respectively, CFC) for all $k>2$.
\end{lemma}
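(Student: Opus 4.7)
The plan is induction on $k$; the base case $k=2$ is the hypothesis. Fix $k\geq 3$, assume $w^j$ is FC for all $2\leq j<k$ (and note $w$ itself is FC, since otherwise $\w^2$ would inherit a bad consecutive subword), and suppose for contradiction $w^k$ is not FC. Since $w$ is logarithmic, $\w^k$ is reduced, so by the remark preceding Proposition~\ref{prop:shift} the non-FC-ness of $w^k$ transfers to $\w^k$. Proposition~\ref{prop:badchain} then gives a commutation-equivalent expression containing either $ss$ or $\<s,t\>_m$ with $m=m(s,t)\geq 3$ as a consecutive subword; the former would force $\w^k$ to cancel, contradicting logarithmicity. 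Thus the heap of $\w^k$ carries a convex chain $C=\{a_1<\cdots<a_m\}$ with alternating $s,t$ labels.

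The first structural step is to pin $C$ to all $k$ copies of $\w$. Letting $c_i\in[1,k]$ denote the copy containing $a_i$: if $c_{i+1}\geq c_i+2$ for some $i$, then any intermediate copy contributes an $s$-labeled element heap-strictly-between $a_i$ and $a_{i+1}$ (by reading-order together with the non-commutation of $s$ with both $s$ and $t$), violating convexity. Hence the $c_i$ increase with jumps of at most $1$. Since the sub-heap on copies $[c_1,c_m]$ is the heap of $\w^{c_m-c_1+1}$ and convexity of $C$ descends to sub-heaps, $\w^{c_m-c_1+1}$ is not FC; the inductive hypothesis together with $\w$ itself being FC then forces $c_m-c_1+1=k$, so $c_1=1$ and $c_m=k$.

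With full span, every $s$- or $t$-labeled element of an interior copy $c\in[2,k-1]$ lies heap-strictly-between $\min C$ in copy $1$ and $\max C$ in copy $k$, hence is forced into $C$ by convexity. Demanding alternation of labels along $C$ within interior copies and across copy boundaries forces the subword $L_\w$ of $s,t$-letters in $\w$ to be purely alternating with distinct endpoints; thus $L_\w=(st)^{p/2}$ (up to swapping $s,t$) with $p>0$ even. Case split on the remaining letters of $\w$. If some $u\in\supp(\w)\setminus\{s,t\}$ fails to commute with (say) $s$, a chaining argument through an $s$-element of a neighboring copy exhibits a $u$-labeled heap element strictly between $\min C$ and $\max C$, contradicting convexity. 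Otherwise every other letter of $\w$ commutes with both $s$ and $t$, so $w=w_1 w_2$ with commuting factors, $w_1=(st)^{p/2}\in W_{\{s,t\}}$ and $\ell(w)=\ell(w_1)+\ell(w_2)$. The existence of the $\<s,t\>_m$ pattern forces $m<\infty$, so $w_1$ has some finite order $N\geq 2$; then $\ell(w^N)\leq N\ell(w_2)<N\ell(w)$, contradicting logarithmicity.

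The CFC version reduces to the FC version by cyclic conjugation. Writing $\w=\w_a\w_b$ with $|\w_a|=r$, the cyclic shift of $\w^k$ by $r$ positions equals $(\w_b\w_a)^k$, the $k$th power of the reduced expression $\w_b\w_a$ for the conjugate $w_a^{-1}ww_a$; this conjugate is again logarithmic since conjugation preserves length and order. The $w^2$ CFC hypothesis says $(\w_b\w_a)^2$ is FC, so the FC version applied to $w_a^{-1}ww_a$ yields $(\w_b\w_a)^k$ FC. Varying $r$ shows every cyclic shift of $\w^k$ is FC, giving CFC of $w^k$. I expect the main obstacle to be the chaining step in the first sub-case of the third paragraph: when $u$ commutes with $t$ but not $s$ and the extremes $i_1=p$ or $i_k=1$ block the most natural chains, one must route the $u$-element through an $s$-element in an adjacent copy and carefully verify that this $s$-element really does land heap-between $\min C$ and $\max C$.
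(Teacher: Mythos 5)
Your argument is correct in substance, but it takes a much longer route than the paper. After reducing (WLOG) to $W$ irreducible of rank $>2$ and $w$ of full support, the paper makes a single observation: some $u\in\supp(w)$ fails to commute with $s$ or with $t$, and every copy of $\w$ contains $u$, so the consecutive $\<s,t\>_{m(s,t)}$ block supplied by Proposition~\ref{prop:badchain} can only involve letters from at most two \emph{consecutive} copies of $\w$ in $\w^k$; hence $\w^2$ is already not FC. That proves the contrapositive in one step, with no induction and no need to locate where the chain begins and ends; the CFC half is then the contrapositive of your last paragraph (a non-FC cyclic shift of $\w^k$ sits inside $\w^{k+1}$). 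You instead pin the convex chain to all $k$ copies by induction, extract the structure $L_\w=(st)^{p/2}$, and rule the configuration out by splitting on whether some $u\notin\{s,t\}$ fails to commute with $s$ or $t$ (convexity contradiction) or all other letters commute with both (contradiction with logarithmicity via a finite dihedral factor, in the spirit of the proof of Proposition~\ref{prop:logarithmic}). A genuine plus of your version is that it never needs the irreducible/full-support reduction, since your second case absorbs it.

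Two spots need repair. The chaining step you flag in your first sub-case is fiddly but fillable: all $s,t$-labelled letters of $\w^k$ are pairwise comparable, so $C$ is an interval of them, and using that $L_\w$ has distinct endpoints one can always wedge an occurrence of $u$ (in copy $2$ or copy $3$) between an $s$-letter of $C$ below and one above, routing through an adjacent copy exactly as you anticipate; note, though, that the paper's $u$-argument would let you stop far earlier, since it shows the chain cannot even span three copies. More seriously, your justification that the cyclic-shift conjugate $w_a^{-1}ww_a$ is logarithmic --- ``conjugation preserves length and order'' --- is not a valid reason: conjugation preserves neither length nor the lengths of powers in general. The claim is true and easy to fix: $(\w_b\w_a)^k$ is a consecutive factor of $\w^{k+1}$, which is reduced because $w$ is logarithmic, so $(\w_b\w_a)^k$ is reduced for every $k$. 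Finally, make explicit that ``every cyclic shift of the single reduced expression $\w^k$ is FC'' yields CFC of the element $w^k$ via the contrapositive of Proposition~\ref{prop:shift}.
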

\begin{proof}
  Assume without loss of generality that $W$ is irreducible and $w$
  has full support. If $W$ has rank $2$, then $w=(st)^j$ and
  $m(s,t)=\infty$, in which case the result is trivial. Thus, we may
  assume that $W$ has rank $n>2$, and we will prove the
  contrapositive. Let $\w$ be a reduced expression for $w$, and
  suppose that $\w^k$ is not FC (it is reduced because $w$ is
  logarithmic). By Proposition~\ref{prop:badchain}, $\w^k$ is
  commutation equivalent to some $\w_1\w_2\w_3$ where
  $\w_2=\<s,t\>_{m(s,t)}$ with $m(s,t)\geq 3$. Since there is some
  $u\in\supp(w)$ that does not commute with both $s$ and $t$, the
  letters in $\w_2$ can only have come from at most two consecutive
  copies of $\w$ in $\w^k$. Thus, $w^2\not\in\FC(W)$.

  If $w^k\not\in\CFC(W)$, then by Proposition~\ref{prop:shift}, some
  cyclic shift of $\w^k$ is not FC. Since every cyclic
  shift of $\w^k$ is a subword of $\w^{k+1}$, this means that
  $\w^{k+1}$ is not FC. From what we just proved, it follows that
  $w^2\not\in\FC(W)$, and hence $w^2\not\in\CFC(W)$. %\qed
\end{proof}
Observe that the assumption that $w$ is logarithmic is indeed
necessary -- without it, the element $w=s_1s_2$ in $I_2(m)$ for $m\geq
5$ would serve as a counterexample.

\begin{lemma}
  \label{lem:ss}
  Let $W$ be an irreducible Coxeter group of rank $n\geq 2$.  If $\w$
  is a reduced expression for $w\in\CFC(W)$ with full support, then
  $\w^k$ is not commutation equivalent to an expression with $ss$ as a
  subword, for any $s\in S$.
\end{lemma}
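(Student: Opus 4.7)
The plan is to argue by contradiction: assume that for some $k \geq 1$ and some $s \in S$, the expression $\w^k$ is commutation equivalent to an expression containing $ss$ as a consecutive subword. The starting observation is that commutations never interchange two occurrences of the same generator, so the $ss$ in the equivalent expression must pair up two occurrences of $s$ that are consecutive in the original left-to-right ordering of $\w^k$, at positions $q_j < q_{j+1}$ say; moreover, every letter at a position strictly between $q_j$ and $q_{j+1}$ must have been commuted past one of these two $s$'s, and therefore commutes with $s$. Since $w$ has full support and $n \geq 2$, $s$ appears in $\w$ at some positions $p_1 < \cdots < p_r$ with $r \geq 1$.

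I then split into two cases according to whether $q_j$ and $q_{j+1}$ lie in the same copy of $\w$ inside $\w^k$, or in two consecutive copies. In the first case necessarily $r \geq 2$, and the substring of $\w$ strictly between two consecutive occurrences of $s$ commutes letter by letter with $s$. Thus $\w$ itself is commutation equivalent to an expression of the form $\w_1 \, ss \, \w_2$, contradicting that $\w$ is a reduced expression for the FC element $w$ (and in particular, contradicting Proposition~\ref{prop:badchain} applied to a reduced expression).

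In the second case, the substring between $q_j$ and $q_{j+1}$ equals $\u := \w[p_r+1{:}|\w|] \cdot \w[1{:}p_1-1]$, the suffix of $\w$ after the last $s$ concatenated with the prefix before the first $s$, and every letter of $\u$ commutes with $s$. If $r=1$, then $\u$ has support $S \setminus \{s\}$, so $s$ commutes with every other generator and is an isolated vertex of the Coxeter graph, contradicting irreducibility together with $n \geq 2$. If $r \geq 2$, I exhibit the cyclic shift $\w'' := \w[p_1+1{:}|\w|] \cdot \w[1{:}p_1]$ of $\w$; by construction $\w''$ ends in $s$, its next-to-last $s$ sits at position $p_r - p_1$, and the block of letters between these two $s$'s is exactly $\u$. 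Since $\u$ commutes with $s$, $\w''$ is commutation equivalent to an expression ending in $ss$, hence is not reduced; but every cyclic shift of $\w$ must be reduced because $w$ is cyclically reduced (a consequence of $w \in \CFC(W)$), and this is the desired contradiction.

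The main obstacle is the bookkeeping in the $r \geq 2$ subcase of the second case: one must pinpoint the precise cyclic shift of $\w$ whose commutation class exposes a forbidden $ss$, and verify it really is a cyclic shift of $\w$. Each case also leverages a different facet of CFC: the first case and the $r \geq 2$ part of the second use full commutativity and cyclic reducedness respectively, while the $r=1$ part is a purely structural contradiction via irreducibility and $n \geq 2$.
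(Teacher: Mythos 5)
Your proof is correct and follows essentially the same route as the paper's: the two occurrences of $s$ that become adjacent must be consecutive with all intervening letters commuting with $s$, and the three contradictions you derive (non-reducedness of $\w$ itself, a non-reduced cyclic shift violating CFC when $s$ occurs at least twice, and $s$ commuting with all of $S\setminus\{s\}$ against irreducibility when $s$ occurs once) match the paper's case analysis, with your split on the number of occurrences of $s$ corresponding exactly to the paper's comparison of $\ell(\u_1)$ and $\ell(\u_2)$. Your treatment is somewhat more explicit about the bookkeeping, but no genuinely different idea is involved.
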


\begin{proof}
  For sake of contradiction, suppose that $\w^k$ is commutation
  equivalent to an expression with $ss$ as a subword. Since $\w$ is
  CFC, these two $s$'s must have come from different copies of $\w$ in
  $\w^k$; we may assume consecutive. Thus, we may write
  \[
  \w^2=(\u_1s\w_1)(\u_2s\w_2)\,,\qquad\w=\u_1s\w_1=\u_2s\w_2\,, 
  \]
  where the word $s\w_1\u_2s$ is also commutation equivalent to an
  expression with $ss$ as a subword. There are two cases to
  consider. If $\ell(\u_1)>\ell(\u_2)$, then $s\w_1\u_2s$ is a subword
  of some cyclic shift of $\w$. However, this is impossible because
  $\w$ is CFC. Thus, $\ell(\u_1)\leq\ell(\u_2)$. In this case, some
  cyclic shift of $\w$ is contained in $s\w_1\u_2s$ as a subword, and
  since $\w$ has full support, every generator appears in this
  subword. However, in order for commutations to make the two $s$'s
  consecutive, $s$ must commute with every generator in $\w_1\u_2$,
  which is the required contradiction. %\qed
\end{proof}

There is an analogous result to Lemma~\ref{lem:w^2} when $w$ is not
logarithmic. However, care is needed in distinguishing between the
expression $\w^2$ being FC, and the actual element $w^2$ being FC.

\begin{lemma}
  \label{lem:non-log}
  Let $W$ be an irreducible Coxeter group of rank $n> 2$.  If $\w$ is
  a reduced expression for a non-logarithmic element $w\in\CFC(W)$
  with full support, then $\w^2\not\in\FC(W)$.
\end{lemma}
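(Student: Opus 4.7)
The plan is to combine the ingredients used in the proofs of Lemma~\ref{lem:w^2} and Lemma~\ref{lem:ss}, but without ever invoking that $w$ is logarithmic; the hypothesis $n>2$ will play the role of excluding the obvious rank-$2$ counterexample $s_1s_2\in I_2(m)$ with $m\geq 5$.

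First I would use the failure of logarithmicity to produce a power of $\w$ that is not FC. Since $w$ is not logarithmic, there is some $k\geq 2$ with $\ell(w^k)<k\cdot\ell(w)$, so the concatenation $\w^k$ is not a reduced word and hence is not FC (non-reduced words are non-FC by the convention of the paper). Proposition~\ref{prop:badchain} then supplies a commutation-equivalent rewriting $\w^k\sim\w_1\w_2\w_3$ in which $\w_2=ss$ for some $s\in S$, or $\w_2=\<s,t\>_{m(s,t)}$ for some distinct $s,t\in S$ with $m(s,t)\geq 3$.

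Next, because $w\in\CFC(W)$ has full support in an irreducible Coxeter group of rank $\geq 2$ — exactly the hypotheses of Lemma~\ref{lem:ss} — the $ss$ case is ruled out, leaving $\w_2=\<s,t\>_{m(s,t)}$ with $m(s,t)\geq 3$. At this point I would run the second half of the proof of Lemma~\ref{lem:w^2} essentially verbatim: because $n>2$, $W$ is irreducible, and $\supp(w)=S$, some generator $u\in S$ fails to commute with at least one of $s$ and $t$, and such a $u$ appears in every copy of $\w$ inside $\w^k$. Tracing the letters of $\w_2$ back to the copies of $\w$ in which they originally sat, if those copies spanned indices $a,\ldots,b$ with $b\geq a+2$, then an entire intermediate copy of $\w$ would lie strictly between the leftmost and rightmost contributors to $\w_2$, and its occurrence of $u$ would have to cross $\w_2$ via commutations — impossible, since commutations preserve the relative order of non-commuting generators. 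Hence the letters of $\w_2$ originate in at most two consecutive copies of $\w$, and restricting the relevant commutations to those two copies exhibits $\w^2$ as commutation equivalent to an expression containing $\w_2$ as a consecutive subword. Therefore $\w^2\notin\FC(W)$.

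The main obstacle, exactly as in Lemma~\ref{lem:w^2}, is the bookkeeping that justifies both ``at most two consecutive copies'' and ``the commutations can be restricted to those two copies''. The cleanest way to formalize this is via the periodic heaps of~\cite{Green:07}, or equivalently by marking the letters of each copy of $\w$ and noting that a commutation move in $\w^k$ either swaps two marked letters (inducing a commutation on the marked subword) or leaves the marked subword invariant. The assumption $n>2$ is essential to produce the required generator $u$: in rank $2$, the element $s_1s_2\in I_2(m)$ for $m\geq 5$ is CFC, has full support, and is non-logarithmic, yet $\w^2=s_1s_2s_1s_2$ remains FC because $\<s_1,s_2\>_m$ has length $m>4$.
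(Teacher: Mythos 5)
Your proposal is correct and follows essentially the same route as the paper's proof: pick $k$ with $\ell(w^k)<k\cdot\ell(w)$, apply Proposition~\ref{prop:badchain}, exclude the $\w_2=ss$ case via Lemma~\ref{lem:ss}, and use irreducibility, rank $n>2$, and full support to produce a generator $u$ forcing the letters of $\w_2$ to come from at most two consecutive copies of $\w$, so that $\w^2\not\in\FC(W)$. The extra remarks on bookkeeping (marked letters/heaps) only elaborate a step the paper also asserts without further detail.
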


\begin{proof}
  Pick $k$ so that $\ell(w^k)<k\cdot\ell(w)$. By
  Proposition~\ref{prop:badchain}, $\w^k$ is commutation equivalent to
  some $\w_1\w_2\w_3$ where either $\w_2=ss$, or
  $\w_2=\<s,t\>_{m(s,t)}$ with $m(s,t)\geq 3$. However, the former is
  impossible by Lemma~\ref{lem:ss}. Moreover, there is another
  generator $u\in S$ appearing in $\w$ that does not commute with both
  $s$ and $t$. Therefore, the letters in $\w_2$ can only have come
  from at most two consecutive copies of $\w$ in $\w^k$. Thus,
  $\w^2\not\in\FC(W)$. %\qed
\end{proof}

\begin{prop}
  \label{prop:w^k}
  Let $W$ be an irreducible Coxeter group of rank $n> 2$.  If $\w$ is
  a reduced expression for $w\in\CFC(W)$ with full support and
  $\w^2\in\FC(W)$, then $\w^k\in\CFC(W)$ for all
  $k\in\N$.\footnote{The obvious necessary condition that
    $\w^2\in\FC(W)$ was inadvertently omitted in the journal version.}
\end{prop}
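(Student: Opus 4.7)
The plan is to combine Lemma \ref{lem:non-log} and Lemma \ref{lem:w^2} to show that every power $\w^m$ is a reduced FC expression, then deduce CFC-ness of $w^k$ by realizing each cyclic shift of $\w^k$ as a contiguous subword of $\w^{k+1}$ and invoking Proposition \ref{prop:shift}.

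First I would verify that $w$ is logarithmic. If it were not, Lemma \ref{lem:non-log}---whose hypotheses (irreducibility, rank $>2$, and $w\in\CFC(W)$ with full support) are all in force---would give $\w^2\notin\FC(W)$, contradicting our standing assumption. Hence $w$ is logarithmic, so $\w^m$ is reduced for every $m\ge 1$. Applying the FC half of Lemma \ref{lem:w^2} to the logarithmic $w$ with $w^2\in\FC(W)$ then yields $w^m\in\FC(W)$ for every $m\ge 2$; combined with reducedness, $\w^m$ is a reduced expression for an FC element for every $m\ge 1$.

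Now fix $k\ge 1$ and an arbitrary cyclic shift $\u$ of $\w^k$. Because $\w^k$ is literally unchanged when cyclically shifted by any multiple of $\ell(w)$, I may assume $\u$ is the shift by $i$ positions for some $0\le i<\ell(w)$. Such an $\u$ is precisely the contiguous subword of $\w^{k+1}=\w\cdot\w^k$ of length $k\cdot\ell(w)$ beginning at position $i+1$. Since $\w^{k+1}\in\FC(W)$, the subword $\u$ is itself FC: it is reduced because contiguous subwords of reduced expressions are reduced, and if the element it represented were not FC, Proposition \ref{prop:badchain} would commutation-equivalently exhibit $\<s,t\>_{m(s,t)}$ with $m(s,t)\ge 3$ inside $\u$ (the $ss$ case being ruled out by reducedness); applying those same commutations inside $\w^{k+1}$ would then display the same bad factor there, contradicting $\w^{k+1}\in\FC(W)$. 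Thus every cyclic shift of $\w^k$ is FC, and Proposition \ref{prop:shift} (in its contrapositive form) forces $w^k\in\CFC(W)$.

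The single conceptual point worth isolating is the ``heredity'' step in the last paragraph: contiguous subwords of an FC expression are again FC. That, together with the observation that $\w^k$ is invariant under cyclic shifts by $\ell(w)$, is what lets one leap from ``$\w^{k+1}$ is FC'' to ``every cyclic shift of $\w^k$ is FC.'' Everything else is a straightforward assembly of Lemmas \ref{lem:non-log} and \ref{lem:w^2} with Propositions \ref{prop:badchain} and \ref{prop:shift}, so no deeper obstacle is anticipated.
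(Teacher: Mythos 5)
Your proof is correct and uses essentially the same ingredients as the paper's: Lemma~\ref{lem:non-log} to get logarithmicity, Lemma~\ref{lem:w^2} to control powers, the observation that cyclic shifts of a power of $\w$ embed as contiguous subwords of the next power, and Proposition~\ref{prop:shift} to pass from FC cyclic shifts to CFC. The only difference is organizational: you argue directly (using only the FC half of Lemma~\ref{lem:w^2} and applying the subword embedding at level $k$), whereas the paper argues by contradiction, first reducing to $k=2$ via the CFC half of Lemma~\ref{lem:w^2} and then contradicting $\w^2\in\FC(W)$ through $\w^3$.
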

\begin{proof}
  Let $\w$ be a reduced expression for $w\in\CFC(W)$. Since
  $\w^2\in\FC(W)$, Lemma~\ref{lem:non-log} tells us that $w$ is
  logarithmic. Suppose for sake of contradiction, that
  $\w^k\not\in\CFC(W)$ for some $k\geq 2$. By Lemma~\ref{lem:w^2}, we
  know that $\w^2\not\in\CFC(W)$, and by Proposition~\ref{prop:shift},
  some cyclic shift of $\w^2$ is not FC. Every cyclic shift of $\w^2$
  is a subword of $\w^3$, thus $\w^3\not\in\FC(W)$. Applying
  Lemma~\ref{lem:w^2} again gives $\w^2\not\in\FC(W)$, the desired
  contradiction. %\qed
\end{proof}

If $\w$ is a reduced expression of a CFC element and $\w^k$ is FC for
all $k$, then $w$ is clearly logarithmic. Thus, we want to understand
which CFC elements have the property that powers of their reduced
expressions are not FC. Theorem~\ref{thm:bands} gives necessary and
sufficient conditions for this to happen, but first we need more
terminology.  If a vertex $s$ in $\Gamma$ has degree $1$, call it an
\emph{endpoint}. An endpoint vertex (or generator) $s$ has a unique
$t\in S$ for which $m(s,t)\geq 3$, and we call $m(s,t)$ the
\emph{weight} of the endpoint. If this weight is greater than $3$, we
say that the endpoint is \emph{large}. In the remainder of this paper,
we will pay particular attention to ``large odd endpoints,'' that is,
endpoints $s\in S$ for which $m(s,t)$ is odd and at least $5$. (We
will say that $m(s,t)=\infty$ is large but not odd.)  As we shall see,
groups with large odd endpoints have CFC elements with a feature
called a ``large band,'' and these elements have properties not shared
by other CFC elements.

\begin{definition}
  Let $w\in\CFC(W)$ and say that $(W',S')$ is the Coxeter system
  generated by $\supp(w)$. We say that $w$ has an \emph{$st$-band} if
  for some reduced expression $\w$ and distinct generators $s,t\in
  S'$, exactly one of which is an odd endpoint of $(W',S')$, the
  following two conditions hold:
  \begin{enumerate}
  \item some cyclic shift of $\w$ is commutation equivalent to a
    reduced expression containing $\<s,t\>_{m(s,t)-1}$ as a subword;
  \item neither $s$ nor $t$ appears elsewhere in $\w$.
  \end{enumerate}
  We analogously define an \emph{$ts$-band} (i.e., some cyclic shift
  of $\w$ is commutation equivalent to a reduced expression containing
  $\<t,s\>_{m(s,t)-1}$ as a subword).  If we do not care to specify
  whether $s$ or $t$ comes first, then we will simply say that $w$ has
  a \emph{band}. An $st$-band is called \emph{small} if $m(s,t)=3$,
  and \emph{large} otherwise.
\end{definition}

\begin{remark}
  \label{rem:symmetry}
  Note that $w$ has an $st$-band if and only if $w^{-1}$ has a
  $ts$-band. If $w$ has a band, then we may assume, without loss of
  generality, that $w$ has an $st$-band, where $s$ is the odd
  endpoint.
\end{remark}

The following result highlights the importance of bands, and is
essential for establishing our main results on CFC elements.
\begin{theorem}
  \label{thm:bands}
  Let $W$ be an irreducible Coxeter group of rank $n> 2$ and let $\w$
  be a reduced expression for $w\in\CFC(W)$ with full support.  Then
  $\w^k$ is FC for all $k\in\N$ if and only if $w$ has no bands.
\end{theorem}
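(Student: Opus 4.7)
My plan is to prove the two directions of the equivalence separately. For the forward direction ``$w$ has a band $\Rightarrow$ some $\w^k$ is not FC,'' I will give an explicit construction with $k=3$. For the converse, I will reduce via Proposition~\ref{prop:w^k} to analyzing $\w^2$, and extract the band from the resulting braid subword. Throughout I use that ``$\w^k$ is FC for all $k$'' is a property of $w$ (not of the chosen reduced expression), because all reduced expressions of an FC element are commutation equivalent.

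For the forward direction, Remark~\ref{rem:symmetry} lets me assume $w$ has an $st$-band with $s$ an odd endpoint; then $m=m(s,t)$ is odd with $m\geq 3$, and $s$ commutes with every generator in $S\setminus\{t\}$. Pick a reduced expression decomposed as $\w=\w_1\w_2$ so that $\w_2\w_1\sim \u_1\<s,t\>_{m-1}\u_2$ with $\u_1,\u_2$ containing no $s$ or $t$. In $\w^3=\w_1(\w_2\w_1)(\w_2\w_1)\w_2$, apply the commutations inside each middle $\w_2\w_1$ block independently (they are disjoint contiguous subwords, so the two applications do not interfere) to obtain
\[
\w^3\sim \w_1\,\u_1\<s,t\>_{m-1}\u_2\,\u_1\<s,t\>_{m-1}\u_2\,\w_2.
\]
Because $s$ commutes with every letter of $\u_2\u_1$, I commute the leading $s$ of the second $\<s,t\>_{m-1}$ leftward through $\u_2\u_1$ to abut the first braid block, forming $\<s,t\>_{m-1}\cdot s=\<s,t\>_m$ as a consecutive subword. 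By Proposition~\ref{prop:badchain}, $\w^3\notin\FC$, so the ``all $k$'' claim fails.

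For the converse, suppose some $\w^k\notin\FC$. Proposition~\ref{prop:w^k} forces $\w^2\notin\FC$, and then Proposition~\ref{prop:badchain} together with Lemma~\ref{lem:ss} yields $\w^2\sim \y_1\<s,t\>_m\y_2$ for some $s,t\in S$ with $m=m(s,t)\geq 3$. Since $\w\in\FC$, the $m$ alternating letters cannot all come from a single copy of $\w$ (else commutations within that copy realize $\<s,t\>_m$ inside $\w$), and since $s,t$ do not commute they preserve their relative order; hence these $m$ letters form a suffix of the $\{s,t\}$-letters of the first $\w$ concatenated with a prefix of those of the second $\w$. I then inspect the non-$\{s,t\}$ letters that must be commuted out from between these $m$ occurrences: each such letter must commute with $s$ or with $t$. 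Combined with $w$ having full support in an irreducible group of rank $>2$, these constraints force $m$ to be odd, force $\w$ to contain exactly $m-1$ letters in $\{s,t\}$, and force one of $s,t$ to be an endpoint of $\Gamma$ whose unique neighbor is the other generator. The cyclic shift of $\w$ corresponding to this $\{s,t\}$-suffix/prefix split is then commutation equivalent to $\u_1\<s,t\>_{m-1}\u_2$ with $\u_1,\u_2$ free of $s,t$, exhibiting the band.

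The main obstacle is precisely this last structural analysis. One must rule out configurations where neither $s$ nor $t$ is an endpoint of $\Gamma$, where the bond strength is even, or where $\w$ has more or fewer than $m-1$ letters in $\{s,t\}$. Each would either contradict the CFC hypothesis (via a cyclic shift of $\w$ that would itself contain a forbidden $ss$ or $\<s,t\>_m$ pattern), or obstruct the commutations needed to realize $\<s,t\>_m$ inside $\w^2$, since some in-between non-$\{s,t\}$ generator would fail to commute with either $s$ or $t$ and would remain wedged between two alternating letters. The irreducibility of $W$ and the full support of $w$ guarantee that the required endpoint really exists in $\Gamma$.
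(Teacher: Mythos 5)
Your forward implication (band $\Rightarrow$ some $\w^k$ not FC) is correct and is essentially the paper's argument: embed two copies of the shifted word in $\w^3$ and slide the endpoint $s$ through $\u_2\u_1$ to complete $\<s,t\>_{m}$. Your reduction of the other implication to $\w^2$ via Proposition~\ref{prop:w^k} is also sound (the paper does the same reduction through Lemmas~\ref{lem:w^2} and~\ref{lem:non-log}, on which Proposition~\ref{prop:w^k} rests), and the use of Proposition~\ref{prop:badchain} with Lemma~\ref{lem:ss} to get $\w^2\sim\y_1\<s,t\>_{m}\y_2$ matches the paper. One small slip: you cite Proposition~\ref{prop:badchain} for the implication ``contains $\<s,t\>_m$ up to commutation $\Rightarrow$ not FC,'' which is the converse of that proposition; the implication you need is standard (Stembridge's criterion) but should be invoked as such.

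The genuine gap is in the structural analysis that turns the braid subword of $\w^2$ into a band, which is the heart of the theorem and which you assert rather than prove. Saying that each intermediate non-$\{s,t\}$ letter ``must commute with $s$ or with $t$'' is too weak to force either generator to be an endpoint, and it does not yield oddness of $m$, nor that $\w$ has exactly $m-1$ letters from $\{s,t\}$, nor band condition (2). The paper's proof runs through a specific mechanism that is absent from your outline: because $w$ is CFC, two of the occurrences of $s$ in the braid block must be the \emph{same} letter of $\w$, contributed by both copies (otherwise the entire block would be carried inside a single cyclic shift of $\w$, contradicting the CFC hypothesis). Writing $\w=\w'_1 s\w'_2$ with that repeated letter, every letter of $\w'_2\w'_1$ is $s$, $t$, or commutes with $s$; full support plus irreducibility of rank $n>2$ then forces $s$ to be an endpoint with unique neighbor $t$. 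Oddness of $m$ follows by applying the same argument to $\w^{-1}$: if $m$ were even, $t$ would also be an endpoint, forcing rank $2$. A further application of the same idea (assuming the terminal letter of the block is not the repeated $s$) shows the repeated $s$ is both the initial and terminal letter of the block, from which one reads off that a cyclic shift of $\w$ is commutation equivalent to a word containing $\<s,t\>_{m-1}$ and that $s,t$ occur nowhere else in $\w$. Without supplying these steps (or a genuine substitute), the direction ``$\w^2$ not FC $\Rightarrow$ $w$ has a band'' is not established, so the proposal as written is incomplete exactly where the theorem is hardest.
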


\begin{proof}
  Suppose that $\w^k$ is not FC for some $k>2$. If $w$ is logarithmic,
  then Lemma~\ref{lem:w^2} tells us that $\w^2$ is not FC. However,
  even if $w$ is not logarithmic, we can still conclude that $\w^2$ is
  not FC, by Lemma~\ref{lem:non-log}. Thus, to prove the theorem, it
  suffices to show that $\w^2$ is not FC if and only if $w$ has a
  band.
 
  First, suppose $\w^2$ is not FC. We will prove that $w$ has a band
  by establishing the following properties:
  \begin{enumerate}[label=(\roman*)]
  \item \label{thm:even has odd} $W$ has an odd endpoint $s$ (say
    $m(s,t)\geq 3$) for which the word $\w^2$ is commutation
    equivalent to an expression of the form
    $\w_1\<s,t\>_{m(s,t)}\w_3$;
  \item \label{thm:even almost non-FC} some cyclic shift of $\w$ is
    commutation equivalent to a reduced expression containing
    $\<s,t\>_{m(s,t)-1}$ or $\<t,s\>_{m(s,t)-1}$ as a subword;
  \item \label{thm:even no extra s or t} neither $s$ nor $t$ appears
    elsewhere in $\w$.
  \end{enumerate}
  Since $\w^2$ is not FC, Proposition~\ref{prop:badchain} implies that
  $\w^2$ is commutation equivalent to an expression of the form $\w_1
  \w_2 \w_3$ in which $\w_2=\<s,t\>_{m(s,t)}$. (Note that $\w_2=ss$ is
  forbidden by Lemma~\ref{lem:ss}.) To prove~\ref{thm:even has odd},
  we will first show that $s$ must be an endpoint, and then show that
  $m(s,t)$ must be odd.

  First, we claim that because $w$ is CFC, two occurrences of $s$ in
  $\w_2$ must correspond to the same letter of $\w$. To see why,
  consider the subword of $\w^2$ from the original position of the
  initial $s$ in $\w_2$ to the original position of the final letter
  (which is either $s$ or $t$). Clearly, the instances of $s$ and $t$
  in this subword must alternate. If no two occurrences of $s$
  correspond to the same letter of $\w$, then this subword is a
  subword of a cyclic shift of $\w$, contradicting the assumption that
  $w$ is CFC, and establishing our claim. In particular, we can write
  $\w^2=(\w'_1s\w'_2)(\w'_1s\w'_2)$, where both instances of $s$ occur
  in $\w_{2}$ and the first instance of $s$ is the initial letter of
  $\w_{2}$. This implies that the letters in $\w'_{2}$ and $\w'_{1}$
  are either other occurrences of $s$ or $t$, or commute with $s$.
  Since $\w=\w'_1s\w'_2$ and has full support and $W$ is irreducible,
  it must be the case that $s$ commutes with every other generator of
  $S$ except $t$, and so $s$ is an endpoint.

  It remains to show that $m(s,t)$ is odd. For sake of a
  contradiction, suppose otherwise, so that $\w_{2}$ ends in $t$.  The
  argument in the previous paragraph using $\w^{-1}$ in place of $\w$
  and $t$ in place of $s$ implies that $t$ must be an endpoint as
  well.  However, we assumed that $W$ is irreducible, and hence $W$
  has rank 2.  This contradicts our assumption that $W$ has rank
  $n\geq 3$, and therefore, $m(s,t)$ is odd.

  To prove \ref{thm:even almost non-FC}, we first prove that the
  instance of $s$ sandwiched between $\w'_{1}$ and $\w'_{2}$ in
  $\w'_{1}s\w'_{2}$ is also the terminal letter of $\w_{2}$.  Towards
  a contradiction, suppose otherwise.  That is, assume that
  $\w^{2}=(\w'_{1}s\u_{1}s\u_{2})(\w'_{1}s\u_{1}s\u_{2})$, where the
  fourth instance of $s$ is the terminal letter of $\w_{2}$.  Then it
  must be the case that every letter between the initial and terminal
  $s$ in $\w_2$ is either $s$, $t$, or a generator that commutes with
  both $s$ and $t$. However, this includes the supports of $\w'_1$,
  $\u_1$ and $\u_2$, and since $\w=\w'_1s\u_1s\u_2$, we conclude that
  every letter in $\w$ is either $s$, $t$, or commutes with $s$ and
  $t$. Again, this contradicts the assumption of $W$ being irreducible
  and of rank $n\geq 3$, so it follows that the two instances of $s$ in
  $(\w'_1s\w'_2)(\w'_1s\w'_2)$ are the initial and terminal letters of
  $\w_{2}$, respectively.  Now, \ref{thm:even almost non-FC} follows
  from the observations that $s\w'_2\w'_1$ is a cyclic shift of $\w$,
  and every $t$ occurring in $\w_{2}$ must occur in
  $\w'_{2}\w'_{1}$. Finally, \ref{thm:even no extra s or t} follows
  from the easy observation that every letter of $\w$ is contained in
  the word $s\w'_2\w'_1s$, which has precisely $m(s,t)$ letters from
  the set $\{s,t\}$. Together, (i), (ii), and (iii) imply that $w$
  has an $st$-band.
  
  We now turn to the converse. Let $w$ be a CFC element with full
  support and a band.  By Remark~\ref{rem:symmetry}, we may assume,
  without loss of generality, that $w$ has an $st$-band, where $s$ is
  the endpoint.  That is, some cyclic shift of $\w$ is commutation
  equivalent to an expression containing $\<s,t\>_{m(s,t)-1}$ as a
  subword. Suppose that $\w=\w_1\w_2$ and the cyclic shift $\w_2\w_1$
  is commutation equivalent to a word $\u=\u_1\<s,t\>_{m(s,t)-1}\u_3$,
  with $\{s,t\}\cap\supp(\u_1\u_3)=\emptyset$. Clearly, $\u^2$ is not
  FC, and so $(\w_2\w_1)^2$ is not FC either. However, $(\w_2\w_1)^2$
  is a subword of $\w^3$, and so $\w^3$ is not FC and hence not
  CFC. By Proposition~\ref{prop:w^k}, $\w^2$ is not FC. %\qed
\end{proof}

\begin{lemma}
  \label{lem:obv}
  Let $W$ be an irreducible Coxeter group with graph $\Gamma$ and let
  $w\in\CFC(W)$.  Let $s, t \in S$ satisfy $m(s, t)\geq 3$, and let
  $\Gamma'$ be the graph obtained from $\Gamma$ by removing the edge
  $\{s,t\}$. Suppose that $\w$ is a reduced expression for $w$ in
  which $t$ occurs exactly once, and that $\Gamma'$ is disconnected.
  Let $\w'$ be the expression obtained from $\w$ by deleting all
  occurrences of generators corresponding to the connected component
  $\Gamma'_s$ of $\Gamma'$ containing $s$.  Then $\w'$ is a reduced
  expression for a CFC element of $W$.
\end{lemma}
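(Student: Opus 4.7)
The plan is to use commutations to factor $\w$ around the single occurrence of $t$, exhibiting $\w'$ as a clean middle block, and then to transport any potential failure of the CFC property for $\w'$ back to $\w$, which would contradict $w\in\CFC(W)$. Set $X=\Gamma'_s$ and $Y=S\setminus X$, so that $s\in X$ and $t\in Y$. The only edge of $\Gamma$ joining $X$ and $Y$ is $\{s,t\}$, because any other such edge would have survived in $\Gamma'$ and placed its $Y$-endpoint in the same component as $s$. Consequently every generator in $Y\setminus\{t\}$ commutes with every generator in $X$. Writing $\w=\u_0 t\u_1$ (valid since $t$ occurs exactly once) and using these commutations to push all $X$-generators in $\u_0$ to the left and all $X$-generators in $\u_1$ to the right yields $\w\equiv\x_0\y_0 t\y_1\x_1$, where $\y_0 t\y_1$ is precisely the expression $\w'$.

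From here, reducedness of $\w'$ is immediate: any strictly shorter expression equivalent to $\w'$ would, after prepending $\x_0$ and appending $\x_1$, yield a strictly shorter expression for $w$, contradicting that $\w$ is reduced. For the CFC property, fix an arbitrary cyclic shift $\w'_c$ of $\w'$, and let $\w_c$ be a cyclic shift of $\w$ whose $Y$-subsequence is exactly $\w'_c$; such a shift exists, obtained by rotating $\w$ so that its first letter is the leading $Y$-generator of $\w'_c$. Since $t$ still appears exactly once in $\w_c$, the factoring argument above applies to give $\w_c\equiv\x_0^c\w'_c\x_1^c$ in $W$, and the same reducedness argument shows $\w'_c$ is reduced. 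Because $w\in\CFC(W)$, the expression $\w_c$ is FC. If $\w'_c$ were not FC, then by Proposition~\ref{prop:badchain} there is an expression $\w'_1\<u,v\>_{m(u,v)}\w'_3$ commutation equivalent to $\w'_c$, with $u,v\in Y$ and $m(u,v)\geq 3$. All of the swaps involved are between adjacent commuting $Y$-generators, so they remain valid in $W$ and can be performed inside $\w_c$ at the same positions, producing $\w_c\equiv\x_0^c\w'_1\<u,v\>_{m(u,v)}\w'_3\x_1^c$ and thereby contradicting that $\w_c$ is FC.

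The single-$t$ hypothesis is what drives the whole argument: $t$ is the unique generator of $Y$ that fails to commute with some member of $X$ (namely $s$), so a single occurrence of $t$ is exactly the condition under which the $X$- and $Y$-letters decouple freely under commutation. The main subtlety is therefore the accounting step of verifying that any commutation carried out between $Y$-generators inside $\w'_c$ can be performed at the corresponding positions inside $\w_c$; since such swaps are purely local and involve no $X$-generator, this is automatic once the factorization $\w_c\equiv\x_0^c\w'_c\x_1^c$ is in hand.
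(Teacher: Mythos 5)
Your proof is correct, and it rests on the same underlying mechanism as the paper's: since $\{s,t\}$ is the only edge of $\Gamma$ joining $\Gamma'_s$ to its complement, every deleted generator commutes with every retained generator except $t$, and the unique occurrence of $t$ lets bad configurations be transported between $\w'$ and $\w$. The organization, however, is genuinely different. The paper argues by contradiction directly on $\w'$: using Propositions~\ref{prop:shift} and~\ref{prop:badchain} it produces, after commutations and cyclic shifts, a subword $aa$ or $\<a,b\>_{m(a,b)}$, and then lifts this back into $\w$ by a case analysis in which the single-$t$ hypothesis kills the case $a=t$ and reduces the braid case to the exceptional pattern $ata$ with $m(a,t)=3$, which is handled by commuting the deleted letters out of the way. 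You instead prove a clean factorization: each cyclic shift $\w_c$ of $\w$, chosen to match a given cyclic shift $\w'_c$ of $\w'$, is commutation equivalent to $\x_0^c\,\w'_c\,\x_1^c$, from which both the reducedness of $\w'_c$ and the lifting of any long-braid subword are immediate, with no case analysis at all. This buys a tidier argument; the paper's route avoids having to select the matching cyclic shift and set up the factorization. Two small points you should make explicit, neither of which is a gap since both ingredients appear earlier in the paper: when invoking Proposition~\ref{prop:badchain} you silently discard the $uu$ alternative, which is legitimate only because you have already shown $\w'_c$ is reduced and commutation equivalence preserves length, so say so; and you verify that every cyclic shift of the particular expression $\w'$ is reduced and FC, whereas the CFC definition quantifies over all reduced expressions of the element, so the final step needs Proposition~\ref{prop:shift} (exactly as in the paper's proof) and should cite it.
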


\begin{proof}
  Suppose for a contradiction that $\w'$ is not a reduced expression
  for a CFC element. Then either $\w'$ is not a reduced expression, or
  $\w'$ is a reduced expression for a non-CFC element. In the former
  case, $\w'$ is commutation equivalent to an expression $\w''$
  containing either (a) a subword of the form $aa$, or (b) a subword
  of the form $\<a,b\>_{m(a,b)}$ with $m(a, b) \geq 3$. In the latter
  case, Proposition~\ref{prop:shift} implies that $\w'$ can be
  cyclically shifted to yield a non-FC expression. By
  Proposition~\ref{prop:badchain}, this expression is commutation
  equivalent to one with a subword equal to either $aa$ or
  $\<a,b\>_{m(a,b)}$ as in cases (a) and (b) above. Regardless, by
  applying a sequence of commutations or cyclic shifts to $\w'$, we
  can obtain a word $\w''$ containing either $aa$ or
  $\<a,b\>_{m(a,b)}$ (but \emph{not} $\<b,a\>_{m(a,b)}$).

  Since $\w$ does not contain such a subword, it follows in case (a)
  that $a=t$, which is a contradiction because $\w$ contains a unique
  occurrence of $t$.  A similar contradiction arises in case (b),
  except possibly if $b=t$ and $m(a,b)=3$. However, in this case, $a$
  commutes with all generators in $\Gamma'_s$, and so $\w$ would be
  commutation equivalent to an expression with subword of the form
  $aba$.  This contradicts the hypothesis that $w$ is FC, completing
  the proof. \phantom{go right please} %\qed
\end{proof}

Lemma~\ref{lem:obv} has an important corollary -- if a CFC element has
a small band, then the corresponding endpoint can be removed to
create a shorter CFC element.

\begin{cor}
  \label{cor:remove-s}
  Let $\w$ be a reduced expression for $w\in\CFC(W)$.  If $w$ has a 
  small band, then removing the
  corresponding endpoint from $\w$ yields a reduced expression for a
  CFC element $w'$. Moreover, if $w$ has no large bands, then neither
  does $w'$.
\end{cor}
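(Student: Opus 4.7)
The plan is to derive the first assertion as a direct application of Lemma~\ref{lem:obv}, and then handle the second by contradiction, leveraging that the deleted endpoint $s$ commutes with every generator in $\supp(w)$ except $t$.

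For the first assertion, I would first observe that a small $st$-band with $s$ the odd endpoint forces $m(s,t)=3$, so the band subword $\<s,t\>_{m(s,t)-1}$ reduces to the two-letter word $st$; condition~(2) of the band definition then forces $s$ and $t$ each to appear exactly once in $\w$. Since $s$ is an endpoint of $\Gamma_{\supp(w)}$, removing the edge $\{s,t\}$ from the irreducible component of $\Gamma_{\supp(w)}$ containing $s$ isolates $s$. I would then apply Lemma~\ref{lem:obv} to the parabolic factor of $w$ corresponding to this component (with $t$ in the role of the uniquely-occurring generator) to conclude that deleting $s$ yields a reduced expression for a CFC element of that parabolic. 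Since the remaining parabolic factors of $w$ occupy disjoint, non-adjacent support and are unaffected by the deletion, the full expression $\w'$ is a reduced expression for a CFC element $w' \in W$.

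For the second assertion, I would assume for contradiction that $w'$ has a large band. By Remark~\ref{rem:symmetry}, we may take it to be an $ab$-band with $a$ the odd endpoint of $\Gamma_{\supp(w')}$ and $m := m(a,b) \geq 5$ odd. The subword $\<a,b\>_{m-1}$ contains $(m-1)/2 \geq 2$ copies of each of $a$ and $b$, forcing each of these generators to appear at least twice in $\w'$, and hence in $\w$. This rules out $a,b \in \{s,t\}$, since $s \notin \supp(w')$ and $t$ appears only once in $\w$. Moreover, $\Gamma_{\supp(w)}$ differs from $\Gamma_{\supp(w')}$ only by the addition of the vertex $s$ and its unique edge to $t$, and since $a \neq t$, the vertex $a$ retains its degree and bond strengths, so it remains an odd endpoint of $\Gamma_{\supp(w)}$.

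The remaining---and main---obstacle is to transfer band condition~(1) from $w'$ to $w$. I would fix a cyclic shift of $\w'$ commutation equivalent to an expression $\u_1 \<a,b\>_{m-1} \u_3$ with $\{a,b\} \cap \supp(\u_1 \u_3) = \emptyset$. Every cyclic shift of $\w'$ arises from a cyclic shift of $\w$ by deleting the unique letter $s$, so the corresponding cyclic shift of $\w$ is the same expression with $s$ reinserted at some position. Since $s$ commutes with every generator of $\supp(w)$ other than $t$, and in particular with $a$ and $b$, one can commute this $s$ out of the subword $\<a,b\>_{m-1}$ into $\u_1$ or $\u_3$, producing a commutation-equivalent reduced expression in which $\<a,b\>_{m-1}$ appears contiguously. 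Since $s \notin \{a,b\}$, band condition~(2) also persists from $\w'$ to $\w$, so $w$ has a large $ab$-band---contradicting the hypothesis.
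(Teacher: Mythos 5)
Your proposal is correct and follows essentially the same route as the paper: the first assertion is obtained by applying Lemma~\ref{lem:obv} with the component of $s$ (after deleting the edge $\{s,t\}$) being the singleton $\{s\}$, and the second by observing that a large band of $w'$ cannot involve $s$ or $t$ (since $t$ occurs only once) and that any other band lifts back to $w$ because $s$ commutes with every generator except $t$. Your write-up is in fact somewhat more detailed than the paper's (which asserts the lifting step tersely), but the underlying argument is the same.
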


\begin{proof}
  Suppose that $\w$ has a small $st$-band where $s$ is the endpoint.
  By definition, $s$ and $t$ occur uniquely in $\w$. Deleting the edge
  $\{s,t\}$ disconnects the Coxeter graph, and the connected component
  containing $s$ is $\Gamma'_s=\{s\}$. We may now apply
  Lemma~\ref{lem:obv}, to conclude that the word $\w'$ formed from
  deleting the (unique) instance of $s$ is CFC in $W$.

  If $w$ has no large bands, the only way that $w'$ could have a
  large band is if it involved $t$. That is, it would have to be a
  $tu$-band or a $ut$-band for some $u$ where $m(t,u)\geq 5$. However, this
  impossible because $t$ occurs uniquely in $w$, and hence in
  $w'$. %\qed
\end{proof}

It is important to note that Corollary~\ref{cor:remove-s} does not
generalize to large bands. For example, suppose that $s$
is an endpoint with $m(s,t)=3$ and $\w=\w_1st\w_2$ (reduced) is a CFC element
with a small $st$-band. By Corollary~\ref{cor:remove-s}, we can infer
that $\w_1t\w_2$ is CFC. In contrast, suppose that $m(s,t)=5$ and $w$
has a large $st$-band, e.g., $\w=\w_1stst\w_2$ (reduced). Now, it is \emph{not}
necessarily the case that $\w_1t\w_2$, or even $\w_1st\w_2$, is
CFC. Indeed, it may happen that the last letter of $\w_1$ and the
first letter of $\w_2$ are both a common generator $u$ with
$m(t,u)=3$. This peculiar quirk has far-reaching implications -- in
Section~\ref{sec:affineweyl}, will use this deletion property
inductively to give a complete characterization of the logarithmic
CFC elements with no large bands.

%%==============================================================
\section{Enumeration of CFC elements}\label{sec:cfc-enumeration}
%%==============================================================

In this section, we will enumerate the CFC elements in all Coxeter
groups. In the groups that contain finitely many, we will also
completely determine the structure of the CFC elements. Once again,
there is a dichotomy between the groups without large odd endpoints
and those with, as the latter class of groups contain CFC elements
with large bands. In~\cite{Stembridge:96}, J.~Stembridge classified
the Coxeter groups that contain finitely many FC elements, calling
them the \emph{FC-finite groups}. In a similar vein, the
\emph{CFC-finite groups} can be defined as the Coxeter groups that
contain only finitely many CFC elements. Our next result shows that a
group is CFC-finite if and only if it is FC-finite. The Coxeter graphs
of these (irreducible) groups are shown in Figure~\ref{fig:cfcfg}, and
they comprise seven infinite families. (The vertex labeled $s_0$ is
called the \emph{branch vertex}, and will be defined later.)

\begin{theorem}\label{thm:cfcfinite}
  The irreducible CFC-finite Coxeter groups are $A_n$ ($n\geq 1$),
  $B_n$ ($n\geq 2$), $D_n$ ($n\geq 4$), $E_n$ ($n\geq 6$), $F_n$
  ($n\geq 4$), $H_n$ ($n\geq 3$), and $I_2(m)$ ($5\leq
  m<\infty$). Thus, a Coxeter group is CFC-finite if and only if it is
  FC-finite.
\end{theorem}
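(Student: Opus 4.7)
The forward containment is immediate: by definition every cyclic shift of a CFC element is FC, so $\CFC(W) \subseteq \FC(W)$, and hence FC-finiteness trivially implies CFC-finiteness. Combined with Stembridge's classification~\cite{Stembridge:96} of FC-finite irreducible Coxeter groups as precisely the types $A_n, B_n, D_n, E_n, F_n, H_n, I_2(m)$ (for $m < \infty$), this shows that each group listed in the theorem is CFC-finite.

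For the converse, my plan is to show that every irreducible Coxeter group $W$ whose type is not on the list contains infinitely many CFC elements. The unifying strategy is to exhibit in each such $W$ a single CFC element $w$ of full support with $\w^2 \in \FC(W)$. Once such a $w$ is produced, Proposition~\ref{prop:w^k} yields $\w^k \in \CFC(W)$ for all $k \geq 1$, and the contrapositive of Lemma~\ref{lem:non-log} shows $w$ is logarithmic, so the powers $\{w^k\}_{k \geq 1}$ have strictly increasing length and $W$ is CFC-infinite. By Theorem~\ref{thm:bands}, constructing $w$ is equivalent to constructing a full-support CFC element with no bands. When $\Gamma$ contains an edge $\{s,t\}$ of infinite bond strength I take $w = st$; when $\Gamma$ has no odd endpoints (for instance $\widetilde{A}_n$, which is a cycle with no endpoints at all, or $\widetilde{C}_n$, whose two endpoints each have bond strength $4$), a Coxeter element of $W$ has no bands by definition, so any Coxeter element works.

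The main obstacle is the remaining case, where $\Gamma$ is FC-infinite yet still has at least one odd endpoint; examples include $\widetilde{G}_2$, $\widetilde{B}_n$, $\widetilde{D}_n$, $\widetilde{E}_6$--$\widetilde{E}_8$, $\widetilde{F}_4$, and various hyperbolic types. In these groups every Coxeter element acquires a small band at each odd endpoint, so I would instead build a longer CFC element by hand, in the spirit of Example~\ref{ex:ex1}\ref{ex:affine-E6}, buffering each odd-endpoint generator $s$ with copies of its neighbours so that no cyclic shift of the reduced expression ever isolates $s$ with its partner $t$ as the subword $\<s,t\>_{m(s,t)-1}$. I would carry out the remaining verification by a short case analysis over the finite list of irreducible FC-infinite types with odd endpoints, handling each either by a direct explicit construction or by an inductive extension using the reverse of the deletion in Corollary~\ref{cor:remove-s}, appending one generator at a time while maintaining both the full-support and no-bands conditions.
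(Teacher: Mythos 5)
Your forward direction and overall skeleton (reduce to exhibiting, in each FC-infinite irreducible group, a full-support CFC element $w$ with $\w^2\in\FC(W)$, then invoke Proposition~\ref{prop:w^k} and Theorem~\ref{thm:bands}) are sound, and your cases of an infinite bond and of graphs with no odd endpoints go through. But the remaining case is exactly where the theorem's content lies, and your plan for it rests on a false premise: the irreducible FC-infinite Coxeter groups with an odd endpoint do \emph{not} form a finite list of types. FC-infiniteness is the generic situation -- any graph containing a cycle, two edges of weight $\geq 4$, a vertex of degree $\geq 4$ or two branch vertices, etc.\ (Stembridge's ten obstructions) gives an FC-infinite group, and infinitely many such graphs have odd endpoints; your enumeration ($\widetilde{G}_2$, $\widetilde{B}_n$, $\widetilde{D}_n$, $\widetilde{E}_6$--$\widetilde{E}_8$, $\widetilde{F}_4$, ``various hyperbolic types'') captures only a sliver of them. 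What is finite is Stembridge's list of \emph{minimal} obstructions, and if you reduce to those (legitimate, since a CFC element of a standard parabolic subgroup is CFC in $W$, so full support in $W$ is not actually needed), the task of producing suitable elements for each obstruction is precisely the nontrivial construction in \cite{Stembridge:96}; your ``buffering'' of odd endpoints is a plausible heuristic but is asserted, not proved, and as it stands the converse direction is a sketch rather than an argument.

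For comparison, the paper's proof avoids these constructions entirely: Stembridge already shows that whenever one of his ten conditions fails, there is a word $\w$ with $\w^k$ FC for all $k\in\N$, and the paper simply combines this with Proposition~\ref{prop:w^k} (every cyclic shift of $\w^k$ is a subword of $\w^{k+1}$, hence FC) to conclude that such a group has infinitely many CFC elements. If you want to keep your band-based strategy, you would either need to cite Stembridge's explicit words for each minimal obstruction (at which point your argument collapses into the paper's) or supply, for every minimal obstruction with an odd endpoint, a verified full-support CFC element of the corresponding parabolic subgroup with no bands -- a concrete, case-by-case construction that your proposal currently leaves open.
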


\begin{proof}
  The ``if'' direction is immediate since $\CFC(W)\subseteq\FC(W)$, so
  it suffices to show that every CFC-finite group is
  FC-finite. Stembridge classified the FC-finite groups
  in~\cite{Stembridge:96} by classifying their Coxeter graphs. In
  particular, he gave a list of ten forbidden properties that an
  FC-finite group cannot have. The list of FC-finite groups is
  precisely those that avoid all ten of these obstructions. The first
  five conditions are easy to state, and are listed below.
  \begin{enumerate}
  \item $\Gamma$ cannot contain a cycle.
  \item $\Gamma$ cannot contain an edge of weight $m(s,t)=\infty$.
  \item $\Gamma$ cannot contain more than one edge of weight greater than 3.
  \item $\Gamma$ cannot have a vertex of degree greater than 3, or more than
    one vertex of degree 3.
  \item $\Gamma$ cannot have both a vertex of degree 3 and an edge of
    weight greater than 3.
  \end{enumerate}
  The remaining five conditions all require the definition of a heap,
  and in the interest of space, will not be stated here. For each of
  the ten conditions, including the above five, Stembridge shows that
  if it fails, one can produce a word $\w\in W$ such that $\w^k$ is FC
  for all $k\in\N$. This, together with Proposition~\ref{prop:w^k},
  implies that if $W$ is CFC-finite, then it is FC-finite, and the
  result follows immediately.  %\qed
\end{proof}

\begin{figure}[ht]
\centering
\begin{tabular}{ll}
\begin{tikzpicture}[scale=1.0]%I_{2}(m)
\draw[fill=black] \foreach \x in {1,2} {(\x,0) circle (2pt)};
\draw {(.25,0) node{$I_{2}(m)$}
(1.5,0) node[label=above:$m$]{}
[-] (1,0) -- (2,0)
(2,0) node[label=below:$s_{0}$]{}};
\end{tikzpicture}
& \\
\begin{tikzpicture}[scale=1.0]%A_{n}
\draw[fill=black] \foreach \x in {1,2,...,6} {(\x,10) circle (2pt)};
\draw {(.5,10) node{$A_{n}$}
(4.5,10) node{$\cdots$}
[-] (1,10) -- (4,10)
[-] (5,10) -- (6,10)
(1,10) node[label=below:$s_{0}$]{}}; 
\end{tikzpicture}
&
\quad  \quad \begin{tikzpicture}[scale=1.0]%E_{n}
\draw[fill=black] \foreach \x in {1,2,...,6} {(\x,4.5) circle (2pt)};
\draw[fill=black] (3,5.5) circle (2pt);
\draw {(.5,4.5) node{$E_{n}$}
(4.5,4.5) node{$\cdots$}
[-] (1,4.5) -- (4,4.5)
[-] (5,4.5) -- (6,4.5)
[-] (3,4.5) -- (3,5.5)
(3,4.5) node[label=below:$s_{0}$]{}};
\end{tikzpicture}\\
\\
\begin{tikzpicture}[scale=1.0]%B_{n}
\draw [fill=black] \foreach \x in {1,2,...,6} {(\x,8.5) circle (2pt)};
\draw {(.5,8.5) node{$B_{n}$}
(1.5,8.5) node[label=above:$4$]{}
(4.5,8.5) node{$\cdots$}
[-] (1,8.5) -- (4,8.5)
[-] (5,8.5) -- (6,8.5)
(2,8.5) node[label=below:$s_{0}$]{}}; 
\end{tikzpicture}
&
\quad  \quad \begin{tikzpicture}[scale=1.0]%F_{n}
\draw[fill=black] \foreach \x in {1,2,...,6} {(\x,3) circle (2pt)};
\draw {(.5,3) node{$F_{n}$}
(2.5,3) node[label=above:$4$]{}
(4.5,3) node{$\cdots$}
[-] (1,3) -- (4,3)
[-] (5,3) -- (6,3)
(3,3) node[label=below:$s_{0}$]{}};
\end{tikzpicture}\\
\\
\begin{tikzpicture}[scale=1.0]
\draw[fill=black] \foreach \x in {1,2,...,6} {(\x,6.5) circle (2pt)};%D_{n}
\draw[fill=black] (2,7.5) circle (2pt);
\draw {(.5,6.5) node{$D_{n}$}
(4.5,6.5) node{$\cdots$}
[-] (1,6.5) -- (4,6.5)
[-] (5,6.5) -- (6,6.5)
[-] (2,6.5) -- (2,7.5)
(2,6.5) node[label=below:$s_{0}$]{}};
\end{tikzpicture}
&
\quad  \quad \begin{tikzpicture}[scale=1.0]
\draw[fill=black] \foreach \x in {1,2,...,6} {(\x,1.5) circle (2pt)};%H_{n}
\draw {(.5,1.5) node{$H_{n}$}
(1.5,1.5) node[label=above:$5$]{}
(4.5,1.5) node{$\cdots$}
[-] (1,1.5) -- (4,1.5)
[-] (5,1.5) -- (6,1.5)
(2,1.5) node[label=below:$s_{0}$]{}}; 
\end{tikzpicture}
\end{tabular}
\caption{Connected Coxeter graphs corresponding to CFC-finite groups.}
\label{fig:cfcfg}
\end{figure}
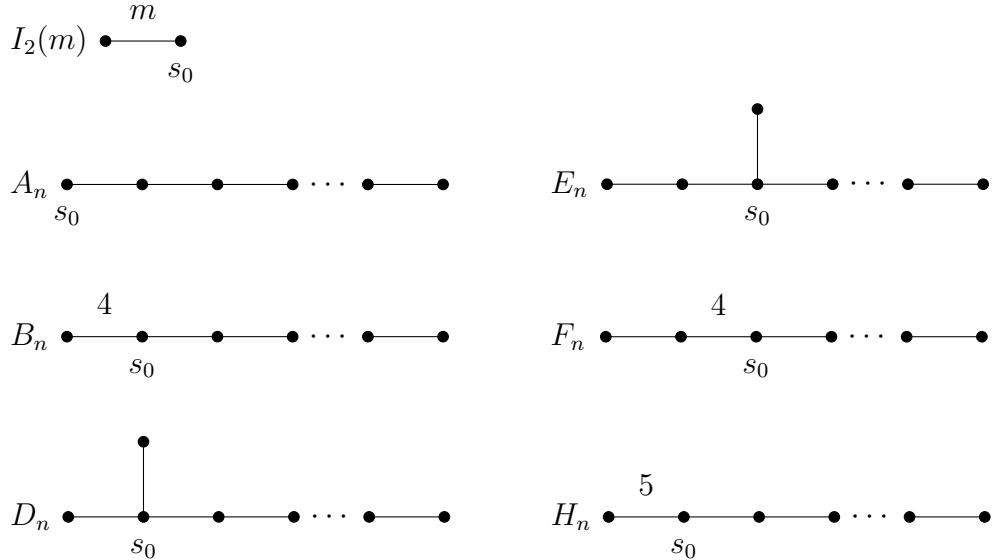

We now turn our attention to enumerating the CFC elements in the
CFC-finite groups. The following lemma is well-known, but we are not
aware of a suitable reference, so we provide a proof here.

\begin{lemma}
  \label{lemma:uniqua}
  Let $W$ be a Coxeter group of type $A_n$ and let $s$ be an endpoint
  generator of $A_n$. If $\w$ is a reduced expression for $w \in
  \FC(W)$, then $s$ occurs at most once in $\w$.
\end{lemma}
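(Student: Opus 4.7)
The plan is to induct on the rank $n$. The base cases $n=1$ and $n=2$ follow by direct enumeration: in $A_2$ the longest element $s_1s_2s_1 = s_2s_1s_2$ is not FC, so the reduced FC expressions are $\emptyset, s_1, s_2, s_1s_2, s_2s_1$, each of which uses $s_1$ at most once. For the inductive step, assume the result for $A_{n-1}$ with $n \geq 3$. Let $s$ be an endpoint of $A_n$; without loss of generality (passing to $\w^{-1}$ if necessary) we may take $s = s_1$. Let $\w$ be a reduced expression for $w \in \FC(A_n)$, and suppose, toward a contradiction, that $s_1$ appears at least twice in $\w$. Pick two consecutive occurrences of $s_1$ in $\w$ (meaning no $s_1$ between them) and factor $\w = \w_1\, s_1\, \w_2\, s_1\, \w_3$, where $\w_2$ contains no $s_1$.

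The key preliminary observation is that any contiguous subword $\w_2$ of a reduced FC expression $\w$ is itself a reduced expression for an FC element. Indeed, if $\w_2$ were non-reduced or commutation equivalent to an expression containing some $\<a,b\>_{m(a,b)}$ with $m(a,b) \geq 3$, then the same reduction or sequence of commutations (with $\w_1$ and $\w_3$ held fixed) would exhibit the same obstruction in $\w$, contradicting the hypothesis on $w$. Since $\w_2$ uses no $s_1$, it lies in the standard parabolic subgroup $\<s_2, \ldots, s_n\>$, which is of type $A_{n-1}$ and has $s_2$ as an endpoint. By the inductive hypothesis applied to this parabolic, $s_2$ occurs at most once in $\w_2$.

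It remains to derive a contradiction in the two resulting cases. If $s_2$ does not occur in $\w_2$, then every letter of $\w_2$ lies in $\{s_3, \ldots, s_n\}$, all of which commute with $s_1$ in $A_n$; commuting them outside gives an expression commutation equivalent to $\w$ with $s_1 s_1$ as a consecutive subword, contradicting the reducedness of $\w$. If $s_2$ occurs exactly once, write $\w_2 = \u_1\, s_2\, \u_2$ with $\u_1, \u_2$ supported on $\{s_3, \ldots, s_n\}$; these letters again commute with $s_1$, and pushing them out yields an expression commutation equivalent to $\w$ that contains $s_1 s_2 s_1$ as a consecutive subword. This violates $w \in \FC(A_n)$, completing the contradiction. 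The main conceptual step is the subword reduction — recognizing that $\w_2$ is a reduced FC expression in the smaller parabolic $A_{n-1}$ in which $s_2$ inherits the role of an endpoint — after which the endpoint's near-total commutativity with the rest of the graph does all the work.
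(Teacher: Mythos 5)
Your proof is correct, but it takes a genuinely different route from the paper's. The paper argues globally in one step: in type $A_n$ the longest element has the reduced expression $s_1(s_2s_1)(s_3s_2s_1)\cdots(s_ns_{n-1}\cdots s_1)$, which contains the endpoint generator $s_n$ exactly once; by the subword characterization of Bruhat order every $w\leq w_0$ has a reduced expression that is a subword of this one, hence with at most one $s_n$, and full commutativity then transfers that count to every reduced expression (commutations preserve letter multiplicities). You instead induct on the rank: factoring $\w=\w_1 s_1\w_2 s_1\w_3$ between consecutive occurrences of the endpoint, noting that the factor $\w_2$ is again a reduced FC expression lying in the type $A_{n-1}$ parabolic $\langle s_2,\dots,s_n\rangle$ with $s_2$ as an endpoint, and using the inductive bound on $s_2$ to force, after commutations, either an $s_1s_1$ or an $s_1s_2s_1$ factor, contradicting reducedness or full commutativity (the latter via the standard converse direction of Proposition~\ref{prop:badchain}, which the paper also uses freely). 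The paper's proof buys brevity and avoids induction, at the cost of invoking Bruhat order and the explicit form of $w_0$; yours buys a self-contained commutation-class/heap argument in the same style as Lemma~\ref{lem:ss} and Theorem~\ref{thm:bands}, making transparent exactly where the endpoint hypothesis enters. One small repair: your parenthetical justification for reducing to $s=s_1$ (``passing to $\w^{-1}$'') is not the right symmetry, since inverting the word does not exchange the two endpoints; invoke instead the diagram automorphism of $A_n$ swapping $s_i\leftrightarrow s_{n+1-i}$ (the paper's ``by symmetry''). This does not affect the substance of the argument.
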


\begin{proof}
  We may assume that $s$ occurs in $\w$, and by symmetry, we may assume
  that $s = s_n$.
  
  In type $A_n$, a well-known reduced expression for the longest element
  $w_0$ is
  \[
  s_1 (s_2 s_1) (s_3 s_2 s_1) \cdots (s_n s_{n-1} \cdots s_1).
  \]
  Every element of $w$ satisfies $w \leq w_0$ with respect to the
  Bruhat order, which means that any such $w$ may be written as a
  subexpression of the given expression.  In particular, any element
  $w$ has a reduced expression containing at most one occurrence of
  $s_n$.  This applies to the case where $w \in \FC(W)$, in which case
  one (and hence all) reduced expressions for $w$ contain at most one
  occurrence of $s_n$. %\qed
\end{proof}

\begin{lemma}
  \label{lemma:cfch}
  Let $W$ be a Coxeter group of type $H_n$.  Label the elements of $S$ as
  $s_1, s_2, \ldots, s_n$ in the obvious way such that $m(s_1, s_2) = 5$.  Let 
  $\w$ be a reduced expression for an element $w \in \CFC(H_n)$ having full
  support.  Then the following all hold:
 \begin{enumerate}[label=(\roman*)]
 \item[(i)] $\w$ contains precisely one occurrence of each generator 
   $s_i$ for $i \geq 3$;
 \item[(ii)] $\w$ contains precisely $j$ occurrences of each generator $s_1$
   and $s_2$, where $j \in \{1, 2\}$;
 \item[(iii)] if $\w$ is not a Coxeter element, then it has a large band.
\end{enumerate}
\end{lemma}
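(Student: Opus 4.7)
The plan is to prove (ii) by a heap analysis, to establish an auxiliary claim in type $A$, and then to deduce (i) and (iii) by cases on the value of $j$.

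For (ii), the letters of $\w$ labelled $s_1$ and $s_2$ form a chain in the heap (they do not commute), and this chain alternates cyclically. Indeed, if two $s_1$'s were adjacent in the $\{s_1, s_2\}$-subchain, the only letters of $\w$ potentially lying between them in the full heap would come from $\{s_3, \ldots, s_n\}$, all of which commute with $s_1$; commuting these aside would produce $s_1 s_1$ as a consecutive subword in some cyclic shift, contradicting reducedness. Cyclic alternation gives $j_1 = j_2$; denote this common value by $j$. If $j \geq 3$, any five consecutive letters of the cyclic subchain form a sub-chain $s_1 s_2 s_1 s_2 s_1$. The only generators comparable in the heap to the outer $s_1$'s are $s_1$ and $s_2$, and the remaining $\{s_1, s_2\}$-letters lie outside this sub-chain in heap order, so no letter outside the sub-chain lies in its heap interval and the sub-chain is convex. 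Commutations in a suitable cyclic shift therefore bring it into consecutive position, producing the forbidden braid $\<s_1, s_2\>_5$ and contradicting $w \in \CFC(W)$. Hence $j \leq 2$, and $j \geq 1$ by full support.

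The auxiliary claim is: \emph{in $A_m$ for $m \geq 1$, every CFC element with full support is a Coxeter element}. The base cases $m = 1, 2$ are immediate. For $m \geq 3$, since $A_m$ is finite some power $\w^k$ fails to be reduced, so Theorem~\ref{thm:bands} gives $\w$ a band; as every bond strength in $A_m$ equals $3$, the band is small, and Corollary~\ref{cor:remove-s} peels off the corresponding endpoint to produce a CFC element of $A_{m-1}$ with full support, which is a Coxeter element by induction.

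I then deduce (i) and (iii) by cases. If $j = 1$, then $s_2$ occurs exactly once, so Lemma~\ref{lem:obv} (with $s = s_1, t = s_2$) deletes the unique $s_1$ from $\w$ to produce a CFC element of the $A_{n-1}$-parabolic $\<s_2, \ldots, s_n\>$ with full support; by the auxiliary claim this is Coxeter, giving (i), and since $\w$ itself is then a Coxeter element, (iii) is vacuous. If $j = 2$, the four-letter subchain $s_1 s_2 s_1 s_2$ is convex by the same reasoning used for (ii), so some cyclic shift of $\w$ is commutation-equivalent to a reduced expression containing $\<s_1, s_2\>_4$ as a consecutive subword; because $s_1, s_2$ each occur exactly twice and both occurrences lie in this subword, and because $s_1$ is the odd endpoint with $m(s_1, s_2) = 5$, this is a large $s_1 s_2$-band, proving (iii). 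After cyclic shifting and further commutation I may write $\w = s_1 s_2 s_1 s_2 \u$, with $\u$ a reduced expression supported on $\{s_3, \ldots, s_n\}$; to finish (i), I verify that $\u$ is CFC in $A_{n-2}$ with full support and invoke the auxiliary claim.

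The main obstacle is this last verification. Given any cyclic shift $\u_2 \u_1$ of $\u = \u_1 \u_2$, the word $\u_2 s_1 s_2 s_1 s_2 \u_1$ is a cyclic shift of $\w$ and therefore FC. An offending braid $s_i s_{i+1} s_i$ with $i \geq 4$ in $\u_2 \u_1$ lifts directly, since $s_i$ and $s_{i+1}$ commute past the inserted band; the difficulty is a putative straddling $s_3 s_4 s_3$ braid, which does not lift immediately because $s_3$ does not commute with $s_2$. Here I would argue by a convexity analysis that the two $s_2$'s of the band pin the $s_3$-letters of $\u$ in the heap of $\w$ in such a way that any three $s_3$- and $s_4$-letters that can be commuted to adjacency must already be confined to a common side of the band --- so the braid is not actually straddling, and the lift argument applies.
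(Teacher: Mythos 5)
The most serious problem is the one you flag yourself at the end: in the $j=2$ case you never actually prove that the word $\u$ obtained by stripping the band $s_1s_2s_1s_2$ from (a cyclic shift of) $\w$ is a reduced expression for a CFC element of the $A_{n-2}$-parabolic, and your proposed fix --- a ``convexity analysis'' showing that a straddling $s_3s_4s_3$ braid cannot occur --- is only a sketch, not an argument. This is exactly the step the paper closes with tools you already had available: writing the relevant cyclic shift as $\u_1 s_1s_2s_1s_2\u_2$ with $\{s_1,s_2\}\cap\supp(\u_1\u_2)=\emptyset$, the word $\u_2\u_1$ is FC and lives in a type $A_{n-2}$ parabolic with $s_3$ as an endpoint, so Lemma~\ref{lemma:uniqua} forces $s_3$ to occur exactly once in $\w$; then Lemma~\ref{lem:obv}, applied to the pair $\{s_2,s_3\}$ (deleting the component $\{s_1,s_2\}$), shows that the stripped word is CFC, and repeated use of Corollary~\ref{cor:remove-s} together with finiteness of type $A$ yields (i). Without this, or a completed version of your pinning argument, the $j=2$ branch of (i) is unproven.

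Two further steps are weaker than you present them. First, your alternation argument for (ii) only excludes two consecutive $s_1$'s in the $\{s_1,s_2\}$-subchain: if two $s_2$'s were consecutive there, the letters between them may include $s_3$'s, which do \emph{not} commute with $s_2$, so they cannot simply be commuted aside; excluding this configuration needs an additional argument (for instance the forced chain of repetitions $s_3,s_4,\dots$ propagating to the endpoint $s_n$ and contradicting full commutativity), so the equality $j_1=j_2$ and hence cyclic alternation is not yet established --- and your bound $j\leq 2$ relies on that alternation. Second, the justification ``the only generators comparable in the heap to the outer $s_1$'s are $s_1$ and $s_2$'' is false as stated, since heap comparability is transitive and chains $s_1<s_2<s_3<\cdots$ certainly occur in FC heaps; what you actually need is that no letter other than $s_1$- and $s_2$-letters can be pinned between the relevant $s_2$'s, which again requires the same propagation argument. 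By contrast, the paper sidesteps all of this by inducting on $n$: CFC-finiteness plus Theorem~\ref{thm:bands} guarantees a band, a small $s_{n-1}s_n$-band is peeled off with Corollary~\ref{cor:remove-s}, and an $s_1s_2$-band is handled via Lemmas~\ref{lemma:uniqua} and~\ref{lem:obv} as above. Your $j=1$ case and your auxiliary type-$A$ claim are sound and close in spirit to the paper's argument, but as written the proof of (ii)'s alternation and the whole $j=2$ branch (hence (i) and (iii)) contain genuine gaps.
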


\begin{proof}
  We prove (i) and (ii) by induction on $n$. For both, the base case
  is $n=2$, which follows by a direct check of $W(I_2(5))$. We will
  prove (i) first, and will assume that $n>2$. From
  Theorem~\ref{thm:cfcfinite}, we know that $W$ has finitely many CFC
  elements.  It follows that for some $k\in \N$ (actually, $k=2$
  works, but this is unimportant), $\w^k$ is not FC, and so by
  Theorem~\ref{thm:bands}, $w$ has a band. Thus, $w$ has a reduced
  expression $\w$ that can be cyclically shifted to a word that is
  commutation equivalent to an expression $\u$ containing either
  $s_1s_2s_1s_2$ or $s_{n-1}s_n$ as a subword (by
  Remark~\ref{rem:symmetry}, we can disregard the other two cases:
  $s_2s_1s_2s_1$ and $s_ns_{n-1}$).

  First, suppose $\w$ has an $s_1s_2$-band, so
  $\u=\u_1s_1s_2s_1s_2\u_2$, and
  $\{s_1,s_2\}\cap\supp(\u_1\u_2)=\emptyset$. Since $\w$ is CFC,
  $\u_2\u_1$ is FC. This element sits inside a type $A_{n-2}$
  parabolic subgroup of $W$ of which $s_3$ is an endpoint. By
  Lemma~\ref{lemma:uniqua}, $s_3$ occurs uniquely in $\u_2\u_1$. Now
  consider the word $\u_1\u_2$. By Lemma~\ref{lem:obv} applied to $\w$
  and the pair of generators $\{s_2, s_3\}$, we see that $\u_1\u_2$ is
  CFC, and we already know that it contains a unique instance of
  $s_3$. By repeated applications of Corollary~\ref{cor:remove-s} and
  the fact that type $A$ is finite, we deduce that $\u_1\u_2$ contains
  precisely one occurrence of each generator in the set $\{s_3, s_4,
  \ldots, s_n\}$, and this proves (i).

  For (ii), assume again that $n>2$ and suppose that $w$ has no large
  band, meaning it must have an $s_{n-1}s_n$-band. We may use
  Corollary~\ref{cor:remove-s} to delete the (unique) occurrence of
  $s_n$ from $\w$ to obtain a CFC element of $W(H_{n-1})$ also having
  full support and no large band. The result now follows by
  induction. 

  For (iii), assume that $\w$ is CFC but not a Coxeter element, and
  $n>2$. By (i) and (ii), $s_1$ and $s_2$ must occur in $\w$ twice
  each, and $s_3$ can only occur once. Clearly, $\w$ is a cyclic shift
  of a CFC element beginning with $s_3$, and since this is the only
  occurence of $s_3$ (the only generator that does not commute with
  both $s_1$ and $s_2$), this element is commutation equivalent to one
  containing either $s_1s_2s_1s_2$ or $s_2s_1s_2s_1$ as a
  subword. Therefore, $w$ has a large band. %\qed
\end{proof}

Suppose $\Gamma$ is the Coxeter graph for an irreducible CFC-finite Coxeter
group. Define $\Gamma_0$ to be the type $A$ subgraph of $\Gamma$
consisting of (a) the generator $s_0$ as labeled in Figure~\ref{fig:cfcfg} and (b) everything to the right of it.  
We call $\Gamma_0$ the \emph{branch} of $\Gamma$ and refer to the distinguished vertex $s_0$ as the \emph{branch vertex}.

The FC elements in the FC-finite groups can be quite complicated to
describe (see~\cite{Stembridge:96,Stembridge:98}). In contrast, the
CFC elements have a very restricted form. The following result shows
that except in types $H_n$ and $I_2(m)$, they are just the Coxeter elements.
\begin{prop}
  \label{prop:CFCelementsfull}
  Let $W$ be an irreducible CFC-finite group.  Suppose that
  $w\in\CFC(W)$ has full support, and that some generator $s\in S$
  appears in $w$ more than once.  Then one of the following situations
  occurs.
  \begin{enumerate}[label=(\roman*)]
    \item $W = I_2(m)$ and $w = stst\cdots st$ has even length and 
    satisfies $0 \leq \ell(w) < m$, or
    \item $W = H_n$ for $n > 2$, and $w$ has a large band.
  \end{enumerate}
\end{prop}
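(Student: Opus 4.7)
The plan is to split by irreducible CFC-finite type, using the classification in Theorem~\ref{thm:cfcfinite}. The two conclusions (i) and (ii) cover exactly $W = I_2(m)$ and $W = H_n$ with $n > 2$, so the remaining task is to show that for $W \in \{A_n, B_n, D_n, E_n, F_n\}$ no such $w$ exists, i.e.\ that every full-support CFC element in those groups is a Coxeter element, contradicting the assumption that some generator repeats.

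For $W = I_2(m)$, I would argue directly from the structure of reduced words, which are alternating sequences $stst\cdots$ of length at most $m-1$. If $w$ has full support and some generator repeats then $\ell(w) \geq 3$; and if $\ell(w)$ were odd, then any reduced expression $\w$ would begin and end with the same letter, so one cyclic shift would yield a consecutive $ss$ and fail to be reduced, contradicting the CFC hypothesis. Thus $\ell(w)$ is even and $\w$ has the form $stst\cdots st$, which is case~(i). For $W = H_n$ with $n > 2$, case~(ii) is immediate: full support together with a repeated generator means $w$ is not a Coxeter element, and Lemma~\ref{lemma:cfch}(iii) directly supplies the required large band.

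For $W \in \{A_n, B_n, D_n, E_n, F_n\}$, I would induct on the rank $n$. The key observation is that every odd endpoint in each of these Coxeter graphs has weight exactly $3$: for $A$, $D$, $E$ all edge weights are $3$; for $B_n$ the endpoint $s_0$ has weight $4$ (not odd), so the unique odd endpoint has weight~$3$; and for $F_n$ the unique weight-$4$ edge is interior, so both endpoints are odd of weight~$3$. Consequently every band in these groups is small. In the inductive step, CFC-finiteness forces $\w^k$ to fail to be FC for some $k$, so Theorem~\ref{thm:bands} (applicable since $n > 2$) yields a band for $w$, necessarily small. Corollary~\ref{cor:remove-s} then lets me delete the corresponding endpoint $s$ from $\w$ to obtain a reduced expression $\w'$ for a CFC element $w'$ of the parabolic subgroup $W_{S \setminus \{s\}}$; since $\w$ has full support and the band definition guarantees that $s$ appears in $\w$ exactly once, $\w'$ has full support in $W_{S\setminus\{s\}}$, which is a connected irreducible CFC-finite group still in the allowed families (possibly dropping into the base cases $A_2$ or $B_2 = I_2(4)$, which I would dispatch by direct enumeration). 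By induction $\w'$ is a Coxeter element, and since $s$ occurs only once in $\w$ it follows that $\w$ itself is a Coxeter element of $W$, the desired contradiction.

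The main technical obstacle will be the bookkeeping to verify that endpoint deletion in each family always lands back in the allowed families $\{A_n, B_n, D_n, E_n, F_n\}$ (rather than escaping into $H$-type or into a non-CFC-finite group), so that the induction terminates at the rank-$2$ base cases; this reduces to a short case check against Figure~\ref{fig:cfcfg}. The rest of the argument is straightforward assembly of Lemma~\ref{lemma:cfch}, Theorem~\ref{thm:bands}, and Corollary~\ref{cor:remove-s}.
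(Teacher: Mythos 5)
Your proposal is correct and follows essentially the same route as the paper: handle $I_2(m)$ by direct inspection, handle $H_n$ via Lemma~\ref{lemma:cfch}, and for the remaining types induct on rank, using CFC-finiteness and Theorem~\ref{thm:bands} to produce a band (necessarily small, since these graphs have no large odd endpoints) and Corollary~\ref{cor:remove-s} to delete the endpoint and conclude $w$ is a Coxeter element. The only difference is that you spell out details the paper leaves implicit, such as checking that endpoint deletion stays within the families $A,B,D,E,F$ and that full support is preserved.
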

\begin{proof}
The proof is by induction on $|S|=n$, the case with $n = 1$ being
trivial. If $n = 2$, then $W = I_2(m)$.  In this case, it is easily
checked that the CFC elements are those of the form $w = stst\cdots
st$, where $s$ and $t$ are distinct generators, $\ell(w)$ is even, and
$0 \leq \ell(w) < m=m(s, t)$.
%This proves (i).

Suppose now that $n>2$. The case when $W=H_n$ follows from
Lemma~\ref{lemma:cfch}. For all other cases,
Theorem~\ref{thm:cfcfinite} tells us that $W$ has no large odd
endpoints. Let $\w$ be a reduced expression for $w$. Since $W$ is
CFC-finite, there exists $k \in \N$ such that $\w^k$ is not FC. In
this case, it follows by induction on rank and
Corollary~\ref{cor:remove-s} that $w$ is a Coxeter element, which is a
contradiction. %\qed
\end{proof}

\begin{remark}
If $w\in\CFC(W)$ with full support such that $W\neq I_{2}(m),H_{n}$, then $w$ must be a Coxeter element.
\end{remark}

Finally, we can drop the restriction that $w$ should have full support.

\begin{cor}
  \label{cor:CFCelements}
  Let $W$ be an irreducible CFC-finite group. 
  Suppose that $w\in\CFC(W)$, and that some
  generator $s\in S$ appears in $w$ more than once.  Then there exists a unique
  generator $t\in S$ with $m(s, t) \geq 5$.  Furthermore, the generators
  $s$ and $t$ occur $j$ times each, in alternating order (but not
  necessarily consecutively), where $2j < m(s, t)$.
\end{cor}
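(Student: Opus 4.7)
The plan is to reduce the corollary to Proposition~\ref{prop:CFCelementsfull} by restricting attention to the parabolic subgroup $W' = W_{\supp(w)}$. Because the FC and CFC properties depend only on the reduced expressions and braid relations involving letters in $\supp(w)$, we automatically have $w \in \CFC(W')$ with full support in $W'$. Decomposing $\Gamma_{\supp(w)}$ into connected components writes $W' = W_1 \times \cdots \times W_k$ as a direct product of irreducible parabolics, and correspondingly $w = w_1 \cdots w_k$ where the $w_i$ are pairwise commuting elements with full support in their respective $W_i$. A reduced expression $\w$ for $w$ can be arranged as a concatenation $\w_1 \cdots \w_k$ by commutations between the commuting factors, and any failure of $w_i$ to be CFC would propagate to $\w$ via such commutations (since cyclic shifts of $\w_i$ lift, up to the same sort of commutations, to cyclic shifts of $\w$); hence each $w_i \in \CFC(W_i)$.

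Let $W_i$ be the unique factor containing $s$. Since $s$ appears more than once in $w_i$, Proposition~\ref{prop:CFCelementsfull} forces either $W_i = I_2(m)$ with $w_i = stst\cdots$ of length $2j < m$ (the hypothesis $2j \geq 4$ forcing $m \geq 5$), or $W_i = H_n$ for some $n > 2$ with $w_i$ having a large band. In the first case, let $t$ be the other generator of $W_i$; then $m(s, t) = m \geq 5$ and $s, t$ occur $j$ times each in alternating order with $2j < m(s, t)$. In the second case, Lemma~\ref{lemma:cfch}(i) shows that each of $s_3, \ldots, s_n$ appears exactly once in $w_i$, so $s$ lies in $\{s_1, s_2\}$; letting $t$ be the other of these two gives $m(s, t) = 5$. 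Lemma~\ref{lemma:cfch}(ii) and the hypothesis that $s$ repeats then force $j = 2$, so $2j = 4 < 5 = m(s, t)$, and the alternating structure of $s, t$ in $w_i$ follows from Lemma~\ref{lemma:cfch}(iii) together with the fact that commutations cannot exchange $s$ and $t$.

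For the uniqueness of $t$ in $S$, I would inspect Figure~\ref{fig:cfcfg} and observe that the only irreducible CFC-finite Coxeter graphs containing a bond of strength $\geq 5$ are $H_n$ (with a single weight-$5$ edge) and $I_2(m)$ for $m \geq 5$ (its single edge); in both cases every vertex has at most one neighbor joined by an edge of weight $\geq 5$. Hence the $t$ obtained above is the unique generator of $S$ with $m(s, t) \geq 5$, not merely the unique one inside $\supp(w)$. The main obstacle in this plan is really the first step — formally justifying the decomposition $w = w_1 \cdots w_k$ with each $w_i \in \CFC(W_i)$, which requires carefully tracking how cyclic shifts of $\w$ correspond, modulo commutations between commuting factors, to cyclic shifts of individual $\w_i$; once that is in hand, the remainder is a routine case analysis driven by Proposition~\ref{prop:CFCelementsfull} and Lemma~\ref{lemma:cfch}.
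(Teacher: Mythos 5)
Your proposal is correct and takes essentially the same route as the paper: the published proof is a one-line reduction to Proposition~\ref{prop:CFCelementsfull} by passing to the parabolic subgroup on $\supp(w)$ and treating each connected component of its Coxeter graph separately, which is exactly your decomposition $w=w_1\cdots w_k$. The details you supply (each $w_i\in\CFC(W_i)$, the case analysis via Lemma~\ref{lemma:cfch}, and uniqueness of $t$ from the classification of CFC-finite graphs) are precisely the steps the paper leaves implicit.
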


\begin{proof}
  This follows from Proposition~\ref{prop:CFCelementsfull} by
  considering the parabolic subgroup corresponding to $\supp(w)$, and
  considering each connected component of the resulting Coxeter graph.
  \phantom{go right please} %\qed
\end{proof}

Corollary~\ref{cor:CFCelements} allows us to enumerate the CFC elements of
the CFC-finite groups. Let $W_n$ denote a rank-$n$ irreducible CFC-finite group
of a fixed type, where $n\geq 3$, and let $W_{n-1}$ be the parabolic
subgroup generated by all generators except the rightmost generator
of the branch of $W_n$.
\begin{cor}
  \label{cor:recurrence}
  Let $n\geq 4$. If $\alpha_n=|\CFC(W_n)|$, then $\alpha_n$ satisfies the
  recurrence
  \begin{equation}
    \label{eq:recurrence}
    \alpha_n=3\alpha_{n-1}-\alpha_{n-2}\;.
  \end{equation}
\end{cor}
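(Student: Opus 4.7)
The plan is to partition $\CFC(W_n)$ according to whether the rightmost branch generator $s := s_{(n)}$ appears in the support of $w$, and then to count the case ``$s \in \supp(w)$'' via a deletion-insertion correspondence with $\CFC(W_{n-1})$. Let $t$ denote the unique neighbor of $s$ in $\Gamma$; inspection of Figure~\ref{fig:cfcfg} confirms that $m(s,t) = 3$ in every CFC-finite irreducible type of rank at least three. Elements of $\CFC(W_n)$ with $s \notin \supp(w)$ are precisely those of $\CFC(W_{n-1})$, contributing $\alpha_{n-1}$. For the remaining elements, Corollary~\ref{cor:CFCelements} combined with $m(s,t) = 3 < 5$ forces $s$ to occur exactly once in any reduced expression of $w$.

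I would then introduce the deletion map $\phi$ on $\{w \in \CFC(W_n) : s \in \supp(w)\}$ that removes the single occurrence of $s$. When $t \in \supp(w)$, Lemma~\ref{lem:obv} applied to the pair $\{s,t\}$ delivers $\phi(w) \in \CFC(W_{n-1})$; the lemma applies because removing the edge $\{s,t\}$ disconnects $\Gamma$ (as $s$ is an endpoint) and because $t$ occurs at most once in $w$, the latter guaranteed by the hypothesis $n \geq 4$, which keeps $t$ away from the large bond present in type $H$. When $t \notin \supp(w)$, $s$ commutes with every other letter and $\phi$ is trivially well-defined. For the fibers, given $w' \in \CFC(W_{n-1})$ the preimages arise from inserting $s$ into a reduced expression of $w'$: if $t \notin \supp(w')$ then $s$ commutes with $w'$ and there is exactly one preimage, $sw' = w's$; if $t \in \supp(w')$ (hence occurring exactly once by the same reasoning), writing $w' = \w'_1 t \w'_2$ gives exactly two inequivalent insertions $\w'_1 s t \w'_2$ and $\w'_1 t s \w'_2$, which are distinct since $sw' \neq w's$ when $s$ and $t$ fail to commute. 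The crucial step is verifying that both preimages lie in $\CFC(W_n)$: each contains $s$ exactly once and $t$ at most once, so no long braid subword $\langle u,v\rangle_m$ with $m \geq 3$ involving $s$ can occur in any cyclic shift; any long braid obstruction not involving $s$ would, upon deleting the commuting letter $s$, yield a cyclic shift of a reduced expression for $w'$ that is not FC, contradicting $w' \in \CFC(W_{n-1})$ via Proposition~\ref{prop:shift}.

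Finally, let $\gamma_{n-1} := |\{w' \in \CFC(W_{n-1}) : t \in \supp(w')\}|$. The fiber analysis gives $\alpha_{n-1} + \gamma_{n-1}$ elements of $\CFC(W_n)$ with $s \in \supp(w)$, so $\alpha_n = 2\alpha_{n-1} + \gamma_{n-1}$. Since the CFC elements of $W_{n-1}$ with $t \notin \supp(w')$ are exactly those of the parabolic subgroup $W_{n-2} = W_{n-1} \setminus \{t\}$, we have $\gamma_{n-1} = \alpha_{n-1} - \alpha_{n-2}$, and hence $\alpha_n = 3\alpha_{n-1} - \alpha_{n-2}$ as required. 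The main obstacle is the CFC verification for the inserted preimages in the second step, which relies on the multiplicity constraints on $s$ and $t$ provided by Corollary~\ref{cor:CFCelements} together with the hypothesis $n \geq 4$.
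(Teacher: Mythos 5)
Your proposal is correct and follows essentially the same route as the paper: split on whether the rightmost branch generator $s$ lies in the support, use Proposition~\ref{prop:CFCelementsfull}/Corollary~\ref{cor:CFCelements} to force $s$ and its neighbor $t$ to occur at most once, and count the elements containing $s$ by the two insertion positions of $s$ relative to $t$, which is arithmetically the same inclusion--exclusion $2\alpha_{n-1}-\alpha_{n-2}$ as in the paper. Your fiber/deletion-map formulation, with the explicit check via Lemma~\ref{lem:obv} and Proposition~\ref{prop:shift} that the insertions are genuinely CFC, just spells out details the paper leaves implicit.
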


\begin{proof}
  The base cases can be easily checked by hand for each type. Every
  CFC element in $W_{n-1}$ is also CFC in $W_{n}$, and there are
  $\alpha_{n-1}$ of these. Let $s$ be the rightmost generator of the
  branch of $W_n$, and consider the CFC elements that contain $s$.  By
  Proposition~\ref{prop:CFCelementsfull}, $s$ and the unique generator
  $t$ such that $m(s,t)\geq 3$ occur at most once each.  This implies
  that every element can be written as $sw$ or $ws$ (both reduced),
  thus we need to compute the cardinality of
  \[
  \{sw\mid w\in\CFC(W_{n-1})\}\cup\{ws\mid w\in\CFC(W_{n-1})\}\;.
  \]
  Each of these two sets has size $\alpha_{n-1}$, and $sw=ws$ if and
  only if $s_{n-1}\not\in\supp(w)$. Thus, their intersection has size
  $|\CFC(W_{n-2})|=\alpha_{n-2}$, and their union has size
  $2\alpha_{n-1}-\alpha_{n-2}$. In summary, there are
  $2\alpha_{n-1}-\alpha_{n-2}$ CFC elements that contain $s$, and
  $\alpha_{n-1}$ CFC elements that do not, so
  $\alpha_n=3\alpha_{n-1}-\alpha_{n-2}$.  %\qed
\end{proof}

\begin{remark}
If one restricts attention to CFC elements with full support, then there
is a version of Corollary~\ref{cor:recurrence} for which the recurrence
relation is $\alpha_n = 2\alpha_{n-1}$ for sufficiently large $n$.
\end{remark}

By Corollary~\ref{cor:recurrence}, to enumerate the CFC elements in
$W_n$ for each type, we just need to count them in the smallest groups
of that family. We will denote the number of CFC elements in the
rank-$n$ Coxeter group of a given type by the corresponding lowercase
letter, e.g., $b_n=|\CFC(B_n)|$. Table~\ref{tbl:enumeration} contains
a summary of the results of each (non-dihedral) type, up to
$n=9$. It also lists the number of FC elements in each type, which was
obtained in~\cite{Stembridge:98}. It is interesting to note that the
enumeration of the FC elements is quite involved, and uses a variety
of formulas, recurrences, and generating functions. In contrast, the
CFC elements in these groups can all be described by the same simple
recurrence (except in type $I_2(m)$, which is even easier).

%%-------------------
\subsection{Type $A$}
%%-------------------

The elements of $A_1=\{1,s\}$ have orders 1 and 2, respectively, and
the set of CFC elements in $A_2 = I_2(3)$ is $\{1,s,t,st,ts\}$.  It
follows that $a_1=2$ and $a_2=5$. The odd-index Fibonacci numbers
satisfy the recurrence in~\eqref{eq:recurrence} as well as the initial
seeds (see~\cite[A048575]{Sloane}). Therefore, $a_n=\Fib_{2n-1}$,
where $\Fib_k$ denotes the $k$th Fibonacci number.
By Corollary~\ref{cor:CFCelements}, the CFC elements in $A_n$ are
precisely those that have no repeat generators. In the language
of~\cite{Tenner:07}, these are the Boolean permutations, and are
characterized by avoiding the patterns $321$ and $3412$. (A
permutation $\pi$ avoids $3412$ if there is no set $\{i,j,k,l\}$ with
$i<j<k<\ell$ and $\pi(k)<\pi(\ell)<\pi(i)<\pi(j)$.) The following
result is immediate.
\begin{cor}
  An element $w\in A_n$ is CFC if and only if $w$ is $321$- and
  $3412$-avoiding.
\end{cor}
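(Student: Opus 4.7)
The plan is to observe that in type $A_n$, all bond strengths satisfy $m(s,t)\in\{2,3\}$, so in particular no edge has $m(s,t)\geq 5$. Consequently, the hypothesis of Corollary~\ref{cor:CFCelements} that some generator $s$ appears in $w$ more than once can never be realized for $w\in\CFC(A_n)$: any such repetition would force the existence of a generator $t$ with $m(s,t)\geq 5$. Therefore $w\in\CFC(A_n)$ if and only if every generator appears \emph{at most} once in any (equivalently, some) reduced expression for $w$.

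Next I would connect this combinatorial condition to pattern avoidance. Elements of $A_n=\sym_{n+1}$ whose reduced expressions have no repeated generator are exactly the \emph{Boolean} permutations studied by Tenner~\cite{Tenner:07}. The main classification in that paper (which I will cite rather than reprove) states that a permutation $\pi\in\sym_{n+1}$ is Boolean if and only if $\pi$ avoids both $321$ and $3412$. The two directions are easy to motivate: if $\pi$ contains a $321$ pattern, then any reduced expression uses a nontrivial braid relation and hence some $s_i$ appears at least twice; the analogous statement for $3412$ comes from the fact that the corresponding minimal reduced expression $s_{i+1}s_is_{i+2}s_{i+1}$ forces a repeat of $s_{i+1}$. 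Conversely, an explicit description of the reduced expressions of $\{321,3412\}$-avoiders shows each generator occurs at most once.

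Stringing these two equivalences together yields the corollary: $w\in\CFC(A_n)$ iff no generator repeats in a reduced expression for $w$ iff $w$ avoids both $321$ and $3412$. There is essentially no obstacle here; the only subtlety is that Corollary~\ref{cor:CFCelements} is stated for elements with full support inside an irreducible parabolic, so I would briefly note that it still applies to arbitrary $w\in A_n$ by decomposing along the connected components of $\Gamma_{\supp(w)}$, each of which is again a type $A$ subdiagram. Thus the proof can be compressed to a one-line appeal to Corollary~\ref{cor:CFCelements} together with the characterization of Boolean permutations from~\cite{Tenner:07}.
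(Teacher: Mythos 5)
Your proposal is correct and follows essentially the same route as the paper: the paper likewise deduces from Corollary~\ref{cor:CFCelements} that CFC elements of $A_n$ have no repeated generators (since no bond strength reaches $5$), and then invokes Tenner's characterization of Boolean permutations as the $321$- and $3412$-avoiding ones. Your added remarks about passing to the components of $\Gamma_{\supp(w)}$ and the (easy) converse that repetition-free elements are Coxeter elements of their support, hence CFC, are exactly the implicit steps in the paper's one-line argument.
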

It is worth noting that $\Fib_{2n-1}$ also counts the $1324$-avoiding
\emph{circular} permutations on $[n+1]$
(see~\cite{Callan:02}). Roughly speaking, a circular permutation is a
circular arrangement of $\{1,\dots,n\}$ up to cyclic shift.  Though
$\Fib_{2n-1}$ counts the circular permutations that avoid $1324$,
these are set-wise not the same as the CFC elements in
$W(A_n)=\sym_{n+1}$. As a simple example, the permutation
$(2,3)=s_{2}\in W(A_3)$ does not avoid $1324$ since it equals $[1324]$
in 1-line notation, but is clearly CFC. Also, the element
$s_2s_3s_1s_2s_4s_3\in W(A_4)$ (or $(1,3,5,2,4)$ in cycle notation)
has no (circular) occurrence of $1324$, but is not CFC.

\begin{table}
  \centering\small
  \begin{tabular}{|lc|crrrrrrrr|} \hline
    & Type  & $n=1$ & 2 & 3 &  4 &   5  &   6 &   7  &    8 & 9  \\ \hline
    $\#\FC$ & $A$  & 2 & 5 & 14 & 42 & 132 & 429 & 1430 & 4862 & 16796  \\
    $\#\FC$ & $B$ & 2 & 7 & 24 & 83 & 293 & 1055 & 3860 & 14299 & 53481 \\
    $\#\FC$ & $F$ & 2 & 5 & 24 & 106 & 464 & 2003 & 8560 & 36333 & 153584 \\
    $\#\CFC$ & $A,B,F$ & 2 & 5 & 13 & 34 &  89 & 233 &  610 & 1597 & 4181 \\ 
    \hline
    $\#\FC$ & $D$  & 2 & 4 & 14 & 48 & 167 & 593 & 2144 & 7864 & 29171  \\
    $\#\CFC$ & $D$ & 2 & 4 & 13 & 35 & 92  & 241 &  631 & 1652  & 4325  \\ 
    \hline
    $\#\FC$ & $E$ &&& 10 & 42 & 167 & 662 & 2670 & 10846 & 44199  \\
    $\#\CFC$ & $E$ &&& 10 & 34 & 92 & 242 & 634 & 1660   &  4346  \\ \hline
    $\#\FC$ & $H$  & 2 & 9 & 44 & 195 & 804 & 3185 & 12368 & 47607 & 182720  \\
    $\#\CFC$ & $H$ & 2 & 7 & 21 &  56 & 147 &  385 & 1008 & 2639 & 6909  \\ 
    \hline
\end{tabular} \vspace*{2mm}
\caption{The number of FC and CFC elements in the CFC-finite groups,
by their rank $n$.
\label{tbl:enumeration}}
\end{table}

%%-------------------
\subsection{Type $B$}
%%-------------------

The two elements of $B_1$ have orders 1 and 2. In $B_2=I_2(4)$, the
elements $sts$ and $tst$ are not cyclically reduced. All remaining
elements other than the longest element are CFC, so we have $b_1=2$
and $b_2=5$.

%%-------------------
\subsection{Type $D$}
%%-------------------

The group $D_1$ is isomorphic to $A_1$, $D_2$ has two commuting Coxeter
generators, and $D_3$ is
isomorphic to $A_3$. Therefore, $d_1 = 2$, $d_2=4$ and $d_3=13$.

%%-------------------
\subsection{Type $E$}
%%-------------------

The groups $E_4$ and $E_5$ are isomorphic to $A_4$ and $D_5$,
respectively, and so $e_4=34$ and $e_5=92$. We note that if we define
$E_3$ by removing the branch vertex from the Coxeter graph of $E_4$,
leaving an edge and singleton vertex, then is is readily checked that
$e_3=10$, and so $e_5=3e_4-e_3$. 

%%-------------------
\subsection{Type $F$}
%%-------------------

The groups $F_2$ and $F_3$ are isomorphic to $A_2$ and $B_3$,
respectively, and so $f_2=5$ and $f_3=13$. As in Type $E$, if we
define $F_1$ as having a singleton Coxeter graph, then $f_1=2$, and
$f_3=3f_2-f_1$. Thus, these are also counted by the odd-indexed
Fibonacci numbers with a ``shifted'' seed, yielding $f_n=\Fib_{2n+1}$.

%%-------------------
\subsection{Type $H$}
%%-------------------

The group $H_1$ has order $2$, and in $H_2=I_2(5)$, the elements $sts$
and $tst$ are not cyclically reduced. All other elements except the
longest element are CFC, so $h_1=2$ and $h_2=7$.

%%=============================================
\section{The Root Automaton}\label{sec:rootaut}
%%=============================================

In order to prove our main result, Theorem~\ref{thm:largebands},
we will induct on the size of the generating set $S$.  A key part in
the inductive step is Lemma \ref{lemma:sinsert}, which shows that in
certain circumstances, one can insert occurrences of a new generator
into an existing reduced expression in such a way as to make a new
reduced expression.  To do this, we use the \emph{root automaton}.  This
technique is described in \cite[Chapters 4.6--4.9]{Bjorner:05}, and
has recently been used to tackle problems similar to ours by
H. Eriksson and K. Eriksson \cite{Eriksson:09}. We formalize it
differently, though, in a way that is useful for our purposes, and
should be of general interest in its own right.

For a Coxeter system $(W,S)$ on $n$ generators, let $V$ be an
$n$-dimensional real vector space with basis
$\{\vec{\alpha}_1,\dots,\vec{\alpha}_n\}$, and equip $V$ with a
symmetric bilinear form $B$ such that
$B(\vec{\alpha}_i,\vec{\alpha}_j)=-\cos(\pi/m_{i,j})$. The action of
$W$ on $V$ by
$s_i\colon\vec{v}\mapsto\vec{v}-2B(\vec{v},\vec{\alpha}_i)\vec{\alpha}_i$
is faithful and preserves $B$, and the elements of the set
$\Phi=\{w\vec{\alpha}_i\mid w\in W\}$ are called \emph{roots}. The map
\[
W\longrightarrow\GL(V)\,,\qquad
s_i\longmapsto\big(\vec{v}\stackrel{F_i}{\mapsto}
\vec{v}-2B(\vec{v},\vec{\alpha}_i)\vec{\alpha}_i\big)
\]
is called the standard geometric representation of $W$. Henceforth, we
will let $\vec{\alpha}_i=\e_i\in\R^n$, the standard unit basis vector,
hereby identifying roots of $W$ with vectors in $\R^n$. Partially
ordering the roots by $\leq$ componentwise yields the \emph{root
  poset} of $W$. For any $\z=(z_1,\dots,z_n)\in\R^n$, the action of
$W$ on $\Phi$ is given by
\begin{equation}
\label{eq:rootautomaton}
\z\stackrel{s_i}{\longmapsto} \z+\sum_{j=1}^n 2\cos(\pi/m_{i,j})
z_j\e_i\;.
\end{equation}
In summary, the action of $s_i$ flips the sign of the $i^{\rm th}$
entry and adds each neighboring entry $z_j$ weighted by
$2\cos(\pi/m_{i,j})$. It is convenient to view this as the image of
$s_i$ under the standard geometric representation $W\to\GL(\R^n)$,
which is a linear map $F_i\colon \R^n\to\R^n$ defined by
\begin{equation}
\label{eq:F_i}
F_i\colon (z_1,\dots,z_n)\longmapsto
(z_1,\dots,z_{i-1},z_i+\sum_{j=1}^n 2\cos(\pi/m_{i,j})
z_j,z_{i+1},\dots,z_n)\;.
\end{equation}
Similarly, for any $\w=s_{x_1}\cdots s_{x_k}\in S^*$, let
$F_{\w}=F_{s_{x_k}}\circ\cdots\circ F_{s_{x_1}}$. It is well-known that for
every root, all non-zero entries have the same sign, thus the root
poset consists of positive roots $\Phi^+$ and negative roots $\Phi^-$,
with $\Phi=\Phi^+\cup\Phi^-$. In 1993, Brink and Howlett proved that
Coxeter groups are automatic~\cite{Brink:93}, guaranteeing the
existence of an automaton for detecting reduced expressions (see
also~\cite{Bjorner:05,ErikssonH:94}). This \emph{root automaton} has
vertex set $\Phi$ and edge set $\{(\z,s_i\z)\mid\z\in\Phi,\,s_i\in
S\}$. For convenience, label each edge $(\z,s_i\z)$ with the
corresponding generator $s_i$. It is clear that upon disregarding
loops and edge orientations (all edges are bidirectional anyways), we
are left with the Hasse diagram of the root poset. We represent a word
$\w=s_{x_1}s_{x_2}\cdots s_{x_m}$ in the root automaton by starting at
the unit vector $\e_{x_1}\in\Phi^+$ and traversing the edges labeled
$s_{x_2},s_{x_3},\dots,s_{x_m}$ in sequence. Denote the root reached
in the root poset upon performing these steps by $\r(\w)$. The
sequence
\[
\e_{x_1}=\r(s_{x_1}),\r(s_{x_1}s_{x_2}),\dots,\r(s_{x_1}s_{x_2}\cdots
s_{x_m})=\r(\w)\, 
\]
is called the \emph{root sequence} of $\w$.  If
$\r(s_{x_1}s_{x_2}\cdots s_{x_i})$ is the first negative root in the
root sequence for $\w$, then a shorter expression for $\w$ can be
obtained by removing $s_{x_1}$ and $s_{x_i}$. By the exchange property
of Coxeter groups~(see \cite{Bjorner:05}), every non-reduced word
$\w\in S^*$ can be made into a reduced expression by iteratively
removing pairs of letters in this manner. Clearly, the word
$\w=s_{x_1}\cdots s_{x_m}\in S^*$ is reduced if and only if
$\r(s_{x_i}s_{x_{i+1}}\cdots s_{x_j})\in\Phi^+$ for all $i<j$.

We say that a Coxeter system $(W',S)$ \emph{dominates} $(W,S)$ if each
bond strength in $(W',S)$ is at least as large as the corresponding
bond strength in $(W,S)$.

\begin{lemma}
  \label{lem:monotone}
  Suppose $(W', S)$ dominates $(W,S)$ and let $\w$ be a reduced expression for 
  $w\in W$.  Then $\w$ is reduced in $W'$, as well.
\end{lemma}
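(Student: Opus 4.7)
The natural approach is to use the root automaton developed earlier in the section. Let $\w = s_{x_1}\cdots s_{x_m}$ be reduced in $W$, and let $\r_k = \r(s_{x_1}\cdots s_{x_k})$ and $\r'_k$ be the corresponding roots obtained by running the root automaton for $W$ and for $W'$, respectively. By the root automaton criterion for reducedness, it suffices to prove that $\r'_k \in \Phi^+$ inside the root system of $W'$ for every $k$.

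I would do this by establishing the stronger claim that $(\r'_k)_j \geq (\r_k)_j$ for all $k$ and all coordinates $j$, by induction on $k$. Since $(\r_k)_j \geq 0$ by the hypothesis that $\w$ is reduced in $W$, the claim immediately forces $\r'_k$ to have non-negative coordinates and hence to lie in the positive cone of $W'$. The base case $k=1$ is trivial because $\r_1 = \r'_1 = \e_{x_1}$. For the inductive step, applying $F_{s_i}$ with $s_i = s_{x_{k+1}}$ leaves the coordinates $j\neq i$ untouched, and in the $i$-th coordinate the quantity
\[
(\r'_{k+1})_i - (\r_{k+1})_i = -\bigl((\r'_k)_i-(\r_k)_i\bigr) + 2\sum_{l\neq i}\bigl[\cos(\pi/m'_{il})(\r'_k)_l - \cos(\pi/m_{il})(\r_k)_l\bigr]
\]
can be rewritten, using the identity $a'r' - ar = a'(r'-r) + (a'-a)r$ summand by summand, so that the summation reduces to a non-negative quantity: the factors $\cos(\pi/m'_{il}) - \cos(\pi/m_{il})$ are non-negative because $m'_{il}\geq m_{il}\geq 2$ forces $\cos(\pi/m'_{il}) \geq \cos(\pi/m_{il}) \geq 0$, while $(\r'_k)_l - (\r_k)_l\geq 0$ and $(\r_k)_l\geq 0$ come from the inductive hypothesis and from the reducedness of $\w$ in $W$.

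The main obstacle is that the negative contribution $-((\r'_k)_i - (\r_k)_i)$ need not be dominated by the non-negative sum using only the bare induction hypothesis. To resolve this I would carry an auxiliary invariant along the induction: immediately after each application of $s_i$, the deficit $(\r'_k)_i - (\r_k)_i$ is exactly the growth sum on the right-hand side above evaluated at the current state, and subsequent applications of other generators $s_l$ (with $l\neq i$) only enlarge that growth sum, because they increase the relevant coordinates in the same direction weighted by the non-negative cosine gaps while the inductive hypothesis is preserved on the untouched slots. This bookkeeping guarantees that whenever $s_i$ is re-applied the growth sum is at least as large as the accumulated deficit, so the new deficit remains non-negative, and the componentwise domination propagates to step $k+1$, completing the induction and with it the proof.
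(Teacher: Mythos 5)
Your inductive step is not actually established, and the device you introduce to rescue it is circular. The step requires, at the moment $s_i=s_{x_{k+1}}$ is applied, the inequality $2\sum_{l\neq i}\bigl[\cos(\pi/m'_{il})(\r'_k)_l-\cos(\pi/m_{il})(\r_k)_l\bigr]\geq (\r'_k)_i-(\r_k)_i$, and you propose to guarantee it by the bookkeeping claim that, once $s_i$ has created a deficit, subsequent applications of generators $s_l$ with $l\neq i$ ``only enlarge the growth sum.'' That claim is unjustified, and it is false in general: applying $s_l$ replaces the surplus $(\r'_k)_l-(\r_k)_l$ by ($l$'s own weighted neighbourhood surplus) minus that surplus, so the term $\cos(\pi/m'_{il})(\r'_k)_l-\cos(\pi/m_{il})(\r_k)_l$ can perfectly well decrease. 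Indeed, writing $c_{lj}=2\cos(\pi/m_{lj})$ and $c'_{lj}=2\cos(\pi/m'_{lj})$, and taking for instance $m'_{il}=m_{il}$, the growth sum fails to decrease at such a step only if $\sum_{j\neq l}\bigl(c'_{lj}(\r'_k)_j-c_{lj}(\r_k)_j\bigr)\geq 2\bigl((\r'_k)_l-(\r_k)_l\bigr)$, whereas the coordinatewise dominance you are inducting on only supplies (and only asks for) the same sum to exceed $(\r'_k)_l-(\r_k)_l$ once. So the ``auxiliary invariant'' is an inequality of exactly the shape you are trying to prove, but stronger; asserting that it propagates is a restatement of the difficulty, not a resolution of it. (A smaller slip of the same kind: right after $s_i$ is applied the deficit equals the growth sum \emph{minus} the previous deficit, not the growth sum exactly.) The dominance statement itself may well be true, but your write-up does not prove it.

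Two further points. First, even granting dominance of the prefix roots, positivity of $\r'_k$ for all $k$ does not certify that $\w$ is reduced in $W'$: the paper's criterion quantifies over \emph{all} contiguous subwords, i.e.\ $\r(s_{x_i}\cdots s_{x_j})\in\Phi^+$ for all $i<j$, and prefix positivity alone is strictly weaker (for $m(s,t)=3$ the non-reduced word $tss$ has all its prefix roots positive; only the subword $ss$ betrays it). This is repairable—apply your claimed dominance to every contiguous subword of $\w$, each of which is reduced in $W$—but as written the reduction in your first sentence is incorrect. Second, for comparison, the paper's proof is a one-line consequence of Matsumoto's theorem (really the Tits word property) and needs no geometry: if $\w$ were not reduced in $W'$, some sequence of braid moves of $W'$ would produce a factor $ss$; but a braid move with $m'(s,t)>m(s,t)$ requires an alternating factor $\langle s,t\rangle_{m'(s,t)}$, which cannot occur in any word obtained from a $W$-reduced word by braid moves with $m'(s,t)=m(s,t)$ (these are $W$-braid moves and preserve $W$-reducedness), and $W$-reduced words contain no $ss$—a contradiction. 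If you want a root-automaton proof instead, you will need to formulate and prove a genuinely stronger invariant than the local one sketched here.
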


\begin{proof}
  This is a consequence of Matsumoto's Theorem. %\qed
\end{proof}

The following lemma is reminiscent of \cite[Proposition 3.3]{Eriksson:09}.

\begin{lemma}\label{lemma:sinsert}
  Suppose that $W'$ is obtained from $W$ by adding a new generator $s$
  to $S$, setting $m(s,t)\geq 3$ for some $t\in S$, and $m(s,s')=2$
  for all $s'\neq t$. Let $\w_i$ be a reduced expression for $w_i\in
  W$, and suppose that $\w_1\w_2\cdots\w_n$ is reduced, and that each
  of $\w_2,\dots,\w_{n-1}$ contains at least one occurrence of $t$.
  Then $\w_1s\w_2s\w_3\cdots s\w_n$ is a reduced expression for an
  element of $W'$.
\end{lemma}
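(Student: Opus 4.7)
The plan is to apply the root automaton characterization stated above: the word $\u := \w_1 s \w_2 s \cdots s \w_n$ is reduced in $W'$ if and only if $\r(u_i u_{i+1} \cdots u_j) \in \Phi^+$ for every $1 \leq i < j \leq |\u|$. I would verify this by splitting on whether the contiguous subword $u_i \cdots u_j$ contains an occurrence of $s$. If it does not, then the subword lies in $\w_1 \w_2 \cdots \w_n$, which is reduced in $W$ by hypothesis; since contiguous subwords of reduced words are reduced, Lemma~\ref{lem:monotone} (using that $W'$ dominates $W$ on the common generators) promotes reducedness to $W'$, so $\r(u_i \cdots u_j) \in \Phi^+$.

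The heart of the proof is the case when the subword contains at least one $s$. Write it as $\u' = \v_0 s \v_1 s \cdots s \v_q$ with $q \geq 1$ and each $\v_k \in S^*$. A straightforward bookkeeping on positions shows that the interior pieces $\v_1,\ldots,\v_{q-1}$ are precisely the full middle $\w_j$'s with $2 \leq j \leq n-1$, so by hypothesis each contains at least one occurrence of $t$; moreover, $\v_0 \v_1 \cdots \v_q$ is a contiguous subword of $\w_1 \cdots \w_n$ and hence reduced in $W$. To show $\r(\u') \in \Phi^+$, I would track the coordinates of the root through the automaton. Since $\cos(\pi/m(s,s')) = 0$ for every $s' \in S \setminus \{t\}$, the operator $F_{s'}$ neither reads from nor writes to the $s$-coordinate, so the $s$-coordinate is touched only by $F_s$ via $v_s \mapsto -v_s + 2\cos(\pi/m(s,t)) \, v_t$. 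For the other coordinates, the operators $F_{s'}$ with $s' \in S \setminus \{t\}$ behave identically in $W'$ and in $W$, while $F_t$ in $W'$ differs from $F_t$ in $W$ by an additive $+2\cos(\pi/m(s,t)) \, v_s$ contribution to $v_t$, which is non-negative as long as $v_s$ stays non-negative.

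The combination of these two facts would let me conclude as follows. The $s$-coordinate never becomes negative, as I would verify by induction on $q$ using the explicit recursion $a_k = -a_{k-1} + 2\cos(\pi/m(s,t)) \, b_k$ between successive $F_s$ applications, where $a_k$ is the $s$-coordinate immediately after the $k$-th $F_s$ and $b_k$ the $t$-coordinate immediately before it. This keeps the $F_t$ correction non-negative and hence makes the $S$-coordinate trajectory in $W'$ dominate componentwise the corresponding trajectory in $W$; since the latter yields a positive root of $W$ by reducedness of $\v_0 \v_1 \cdots \v_q$ in $W$, the $W'$ trajectory yields a non-negative combination of simple roots of $W'$, i.e., an element of $\Phi^+$. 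The main obstacle is pinning down the coupled recursion between the $s$- and $t$-coordinates under successive $F_s$ and $F_t$ applications; this is precisely where the hypothesis that each interior $\v_k$ contains $t$ is needed, since without a $t$ sandwiched between two $F_s$ operations the second $F_s$ would return $v_s$ to its previous value and effectively mimic a cancellable $ss$ subword.
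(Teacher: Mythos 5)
Your setup is fine: reducing to the root-automaton criterion, noting that $s$-free contiguous subwords are reduced in $W$ and hence give positive roots, and isolating subwords of the form $\v_0 s\v_1 s\cdots s\v_q$ with $\v_0\cdots\v_q$ reduced in $W$ and each interior $\v_k$ containing $t$, is all correct. The gap is in the positivity argument for these subwords, and it sits exactly at the crux of the lemma. You assert that the $s$-coordinate stays non-negative ``by induction on $q$ using the explicit recursion $a_k=-a_{k-1}+2\cos(\pi/m(s,t))\,b_k$,'' but that recursion is just the definition of how $F_s$ acts; to conclude $a_k\geq 0$ you need the quantitative lower bound $2\cos(\pi/m(s,t))\,b_k\geq a_{k-1}$, and nothing in your outline produces it. Knowing $a_{k-1}\geq 0$ works against you here, and the only lower bound on $b_k$ your scheme could supply is the componentwise domination $b_k\geq b_k^{W}\geq 0$, which is far from $a_{k-1}/\!\bigl(2\cos(\pi/m(s,t))\bigr)$. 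Worse, the two halves of your argument presuppose each other: the domination of the $W$-trajectory requires $v_s\geq 0$ at every application of $F_t$, while non-negativity of $v_s$ at each $F_s$ requires a lower bound on $v_t$ that you hope to extract from domination. The inequality $2\cos(\pi/m(s,t))\,v_t\geq v_s$ just before each $F_s$ is precisely the statement that the root does not turn negative at that step, i.e.\ it is essentially the lemma itself, so invoking it via an unspecified induction is circular. Your closing remark about why a $t$ must sit between consecutive $s$'s explains why the hypothesis is necessary, but it is not an argument that it is sufficient. There is also a secondary defect: for subwords with $\v_0$ empty (those beginning with $s$), the $W'$-trajectory starts at $\alpha_s$, whose $S$-coordinates are all zero, so it does not dominate the $W$-trajectory of $\v_1\cdots\v_q$ at the start, and your final step (``the $S$-coordinates dominate a positive root of $W$'') has nothing to compare against.

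For contrast, the paper avoids any such coordinate-by-coordinate monotonicity claim. It first uses domination (Lemma~\ref{lem:monotone}) to reduce to $m(s,t)=3$, and then exploits linearity of the maps $F_{\w}$ together with the identity $\r(st)=\r(s)+\r(t)$ to peel the root $\r(\w_1 s\w_2 s\cdots s\w_n)$ apart into an explicit non-negative sum of roots of words, each of which is a contiguous subword of a word already known to be reduced (equation~\eqref{eq:basecase}), with an induction on $n$ handling the general case. That decomposition is where the hypothesis that each interior block contains $t$ does real work. If you want to rescue your approach, you would need an invariant strong enough to certify $2\cos(\pi/m(s,t))\,v_t\geq v_s$ before each $F_s$; in practice this amounts to tracking a decomposition of the current root like the paper's, not a simple componentwise comparison.
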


\begin{proof}
  It suffices to show that $\r(\w_1s\w_2s\w_3\cdots s\w_n)$
  is a positive root, and we will induct on $n$. Moreover, by
  Lemma~\ref{lem:monotone}, we only need to prove it for the case when
  $m(s,t)=3$.

  The base case is when $n=3$, because this guarantees at least one
  instance of $t$ in $\w_1s\w_2s\w_3$. First, observe that $s\w_2s$
  is reduced, because $s\not\in D_R(s\w_2)$. Also, note that
  $\r(\w_1s)=\r(\w_1)+c_1\e_s=\r(\w_1)+c_1\r(s)$, for some
  non-negative constant $c_1$. By linearity,
  \begin{align*}
    \r(\w_1s\w_2s\w_3)&=F_{\w_3}\circ F_s\circ F_{\w_2}[\r(\w_1s)]\\
    &=F_{\w_3}\circ F_s\circ F_{\w_2}[\r(\w_1)+c_1\r(s)]\\ 
    &=\r(\w_1\w_2s\w_3)+c_1\r(s\w_2s\w_3)\,.
  \end{align*}
  It suffices to show that both of these roots are positive, or
  equivalently, that the corresponding words are reduced. First off,
  $\w_1\w_2s\w_3$ is clearly reduced in the Coxeter group formed by
  setting $m(s,t)=2$, and so it is reduced in $W'$ by
  Lemma~\ref{lem:monotone}. We now turn our attention to
  $\r(s\w_2s\w_3)$.  Suppose that $\w_2=\u_0t\u_1t\u_2\cdots t\u_k$,
  with $t\not\in\supp(\u_i)$ for each $i$ (by assumption, $i\geq
  1$). Since $s$ is disjoint from all vertices in each $\u_i$, we have
  $\r(s\u_i)=\r(s)$. Thus, we may omit $\u_0$ from $\w_2$ when
  computing $\r(s\w_2s\w_3)$. Since $m(s,t)=3$, we have 
  $\r(st)=\r(t)+\r(s)$, and so
  \begin{align*}
    \r(s\w_2s)=\r(st\u_1t\u_2\cdots t\u_ks)&=F_{\u_1t\u_2\cdots t\u_k s}[\r(t)+\r(s)] \\
    &=\r(t\u_1t\u_2\cdots t\u_ks)+\r(s\u_1t\u_2\cdots t\u_ks) \\
    &=\r(t\u_1t\u_2\cdots t\u_ks)+\r(st\u_2\cdots t\u_ks)\,.
  \end{align*}
  Applying this same technique to $\r(st\u_2\cdots t\u_ks)$ yields
  \[
  \r(st\u_2\cdots t\u_ks)=F_{\u_2t\u_3\cdots t\u_k s}[\r(t)+\r(s)]
  =\r(t\u_2t\u_3\cdots t\u_ks)+\r(st\u_3\cdots t\u_ks)\,.
  \]
  We can continue this process and successively pick off roots of the form
  $\r(t\u_i\cdots t\u_ks)$ for $i=1,2,\dots$. At the last step,
  we get
  \[
  \r(st\u_ks)=F_{\u_ks}[\r(t)+\r(s)]=\r(t\u_ks)-\r(s)=[\r(t\u_k)+\r(s)]-\r(s)
  =\r(t\u_k)\,.
  \]
  Putting this together, we have
  \begin{align*}
    \r(s\w_2s)&=\r(s\u_0t\u_1\cdots t\u_ks)\\
    &=\r(st\u_1\cdots t\u_ks) \\
    &=[\r(t\u_1\cdots t\u_ks)+\cdots+\r(t\u_{k-1}t\u_ks)+\r(t\u_ks)]
    - \r(s) \\
    &=[\r(t\u_1\cdots t\u_ks)+\cdots+\r(t\u_{k-1}t\u_ks)]+\r(t\u_k)\,.
  \end{align*}
  Finally, we get $\r(s\w_2s\w_3)$ from this by applying the map
  $F_{\w_3}$ to each term, yielding
  \begin{equation}\label{eq:basecase}
  \r(s\w_2s\w_3)
  =[\r(t\u_1\cdots t\u_ks\w_3)+\cdots+\r(t\u_{k-1}t\u_ks\w_3)]+\r(t\u_k\w_3)\,.
  \end{equation}
  Each of the roots on the right-hand side of~\eqref{eq:basecase} are
  roots of expressions that are subwords of $\w_2s\w_3$ or $\w_2\w_3$,
  both of which are reduced. Thus, $\r(s\w_2s\w_3)$ is a positive
  root, and this establishes the base case.

  For the inductive step, we need to show that $\r(\w_1s\w_2\cdots
  s\w_n)$ is positive. By linearity,
  \begin{align*}
    \r(\w_1s\w_2s\w_3\cdots s\w_n) &=F_{\w_n}\circ F_s\circ\cdots\circ
    F_{\w_3}\circ F_s\circ F_{\w_2}[\r(\w_1)+c_1\r(s)]
    \\ &=\r(\w_1\w_2s\w_3\cdots s\w_n)+c_1\r(s\w_2s\w_3\cdots
    s\w_n)\,.
  \end{align*}
  The first root is positive by the induction hypothesis, so to prove
  the lemma, it suffices to show that $\r(s\w_2s\w_3\cdots s\w_n)$ is
  positive. Using~\eqref{eq:basecase}, we get
  \begin{align*}
 \r(s\w_2s\w_3\cdots s\w_n)&=F_{s\w_{4}\cdots s\w_n}[\r(s\w_2s\w_3)] \\
    &=[\r(t\u_1\cdots t\u_ks\w_3s\w_4\cdots s\w_n)+\cdots+\r(t\u_{k-1}t\u_ks\w_3s\w_4\cdots s\w_n)]\\
    &\quad +\r(t\u_k\w_3s\w_4\cdots s\w_n)\,.
  \end{align*}
  Each of these are roots of expressions that are subwords of either
  the word $\w_2s\w_3s\w_4\cdots s\w_n$ or of $\w_2\w_3s\w_4\cdots
  s\w_n$, both of which are reduced by the induction hypothesis. %\qed
\end{proof}

%%======================================================
\section{Logarithmic CFC elements}\label{sec:affineweyl}
%\section{The Logarithmic Property of CFC elements}\label{sec:affineweyl}
%%======================================================

Recall Theorem~\ref{thm:speyer}, which said that Coxeter elements are
logarithmic if and only if they are torsion-free. The following
theorem generalizes this to CFC elements without large bands.
\begin{theorem}
  \label{thm:largebands}
  Let $w$ be a CFC element of $W$ with no large bands. Then $w$ is
  logarithmic if and only if $w$ is torsion-free.
\end{theorem}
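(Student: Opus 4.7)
The forward direction is immediate from Proposition~\ref{prop:logarithmic}. For the converse, suppose $w$ is torsion-free; the goal is to show $\ell(w^k)=k\ell(w)$ for every $k\geq 1$. First I would reduce to the case where $W$ is irreducible, infinite, and $w$ has full support, by decomposing $w=w_1\cdots w_m$ along the connected components of $\Gamma_{\supp(w)}$: the factors commute pairwise, each inherits the CFC, torsion-free, and no-large-bands hypotheses, each lives in an infinite irreducible parabolic, and the lengths are additive. The rank-$2$ case $W=I_2(\infty)$ is trivial, since every CFC element is an alternating product with $m(s,t)=\infty$ and no braid relations or cancellations can ever apply.

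The main argument, for rank greater than $2$, is induction on $\ell(w)$. The base case $\ell(w)\leq|S|$ forces $w$ to be a Coxeter element (given full support), so Theorem~\ref{thm:speyer} applies. In the inductive step $\ell(w)>|S|$, at least one generator repeats. If $w$ has no bands, Theorem~\ref{thm:bands} gives $\w^k\in\FC(W)$ for all $k$, so $\w^k$ is reduced and $\ell(w^k)=k\ell(w)$. Otherwise the no-large-bands hypothesis and Remark~\ref{rem:symmetry} let me assume $w$ has a small $st$-band with $s$ the odd endpoint, $m(s,t)=3$, and $s,t$ each occurring exactly once in $\w$. Write $\w=\u_1 s\u_2$; Corollary~\ref{cor:remove-s} then says $\w'=\u_1\u_2$ is a reduced expression for a CFC element $w'$ with $\supp(w')=S\setminus\{s\}$ and no large bands.

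Next I would apply Lemma~\ref{lemma:sinsert}, with the lemma's $W$ and $W'$ playing the roles of $W_{S\setminus\{s\}}$ and our $W$. Take blocks $\w_1=\u_1$, $\w_i=\u_2\u_1$ for $2\leq i\leq k$, and $\w_{k+1}=\u_2$. Their concatenation is $\u_1(\u_2\u_1)^{k-1}\u_2=(\u_1\u_2)^k=(\w')^k$, and each middle block $\u_2\u_1$ contains $t$ because the band condition puts $t$ somewhere in $\u_1\u_2$. The lemma then lifts reducedness of $(\w')^k$ in $W_{S\setminus\{s\}}$ to reducedness of $\w^k=(\u_1 s\u_2)^k$ in $W$, giving $\ell(w^k)=k\ell(w)$. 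So it suffices to know that $w'$ is logarithmic in $W_{S\setminus\{s\}}$, which is irreducible since $s$ is an endpoint. If this parabolic is infinite, then $w'$ is torsion-free there and the inductive hypothesis on $\ell$ finishes the job.

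The main obstacle is ruling out the possibility that $W_{S\setminus\{s\}}$ is finite. I plan to handle this by contradiction using Proposition~\ref{prop:CFCelementsfull}: since $\ell(w')=\ell(w)-1>|S\setminus\{s\}|$, some generator of $w'$ repeats, so the parabolic would have to be $I_2(m)$ or $H_n$. The $I_2(m)$ alternative fails because the band condition forces $t$ to occur exactly once in $\w'$, whereas an alternating element of $I_2(m)$ of even length $\geq 4$ requires both generators to appear at least twice. The $H_n$ alternative fails by Lemma~\ref{lemma:cfch}(iii), which forces any non-Coxeter full-support CFC element of $H_n$ to have a large band, contradicting the preservation of the no-large-bands property under endpoint deletion recorded in Corollary~\ref{cor:remove-s}. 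This exhausts the finite possibilities, so $W_{S\setminus\{s\}}$ must be infinite and the induction closes.
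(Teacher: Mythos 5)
Your proposal is correct, and it rests on the same machinery as the paper's proof — Theorem~\ref{thm:bands} for the bandless case, Corollary~\ref{cor:remove-s} to delete the endpoint of a small band, Lemma~\ref{lemma:sinsert} to lift reducedness of $(\w')^k$ back to $\w^k$, and Theorem~\ref{thm:speyer} for Coxeter elements — but it is organized differently, and the difference is substantive. The paper inducts on $|S|$, so in its inductive step the parabolic $W'$ obtained by deleting the endpoint may genuinely be finite (this does occur, e.g.\ for a Coxeter element of $\widetilde{E}_8$, where deleting an endpoint leaves a finite group); the paper must then show that $w'$, and hence $w$, is a Coxeter element, which requires Corollary~\ref{cor:CFCelements} together with the separate treatment of the $H_3$/$H_4$ possibilities via the six graphs of Figure~\ref{fig:lastcase} and Lemma~\ref{lemma:cfch}. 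You instead induct on $\ell(w)$ and absorb all Coxeter elements into the base case, so in your inductive step $\ell(w')>|S\setminus\{s\}|$ forces a repeated generator in $w'$; if $W_{S\setminus\{s\}}$ were finite, Proposition~\ref{prop:CFCelementsfull} would make it either $I_2(m)$ (impossible, since $t$ occurs exactly once in $\w'$ while an even alternating word of length at least $4$ repeats both generators) or $H_n$ with $w'$ carrying a large band (impossible by the last clause of Corollary~\ref{cor:remove-s}). This disposes of the finite case outright and avoids the Figure~\ref{fig:lastcase} analysis entirely, at the cost only of checking that your block decomposition $\u_1,\ \u_2\u_1,\dots,\u_2\u_1,\ \u_2$ meets the hypotheses of Lemma~\ref{lemma:sinsert}, which it does since every middle block contains $t$. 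One small imprecision: in the rank-$2$ case, ``no cancellations can ever apply'' is not literally true for all alternating words (odd-length ones square down), but those are not CFC, and the torsion-free full-support CFC elements of the infinite dihedral group are even alternating words, for which logarithmicity is indeed immediate.
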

\begin{proof}
The forward direction is trivially handled by Proposition~\ref{prop:logarithmic}, so we will only consider the reverse direction. Moreover, it suffices to consider the case where $W$ is irreducible and $w$ has full support. This means that either $|S|\geq 3$, or $W$ is the free Coxeter group on $2$ generators (i.e., $m(s_1,s_2)=\infty$). The latter case is trivial and so we will ignore it and assume that $|S|\geq 3$.

Let $\w$ be a reduced expression for $w$. If $\w^{k}$ is FC for all $k$, then we are done.  Assume otherwise. By Theorem~\ref{thm:bands}, with the assumption that $w$ has no large bands, $w$ must have a small $st$-band for some $s,t\in S$, meaning the occurrences of $s$ and $t$ in $\w$ are both unique.  Assume without loss of generality that $s$ (and not $t$) is the endpoint, and let $W'$ be the parabolic subgroup of $W$ obtained by removing $s$. By Corollary~\ref{cor:remove-s}, deleting the unique occurrence of $s$ from $\w$ yields a reduced expression $\w'$ for a CFC element $w'$ of $W'$ that has no large bands. From here, we have two potential ways to show that $w$ is logarithmic. If $W'$ is infinite and $w'$ is a Coxeter element, then $w$ is a Coxeter element of $W$, and hence logarithmic by Theorem~\ref{thm:speyer}. Alternatively, if $w'$ is logarithmic, then $(\w')^k$ is reduced for all $k$, and so by Lemma~\ref{lemma:sinsert}, $\w^k$ is reduced as well.

We will proceed by induction on $|S|$.  For the base case, suppose
that $|S|=3$, meaning $W'$ is of type $I_2(m)$.  Since $t$ occurs
exactly once in $\w$, the remaining generator of $I_{2}(m)$ occurs
precisely once.  Thus, $w'$ is a Coxeter element, and we are done. 

For the inductive step, assume $|S|\geq 4$.  If $W'$ is infinite, then
by induction, $(\w')^k$ is reduced in $W'$, and so $w$ must be
logarithmic. Thus, suppose that $W'$ is finite. We have two cases. If
$W'$ has no large odd endpoints, then it follows from
Corollary~\ref{cor:CFCelements} that $w'$ is a Coxeter element.  Now,
suppose that $W'$ has a large odd endpoint. Since $W'$ is finite and
of rank at least $3$, it must be of type $H_3$ or $H_4$. In this case,
the only possibilities for the Coxeter graph of $W$ are shown in
Figure~\ref{fig:lastcase}.
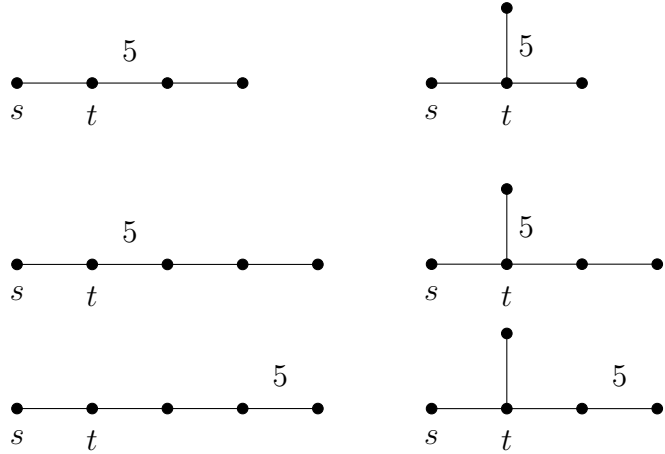
\begin{figure}[ht]
\centering
\begin{tabular}{ll}
 \begin{tikzpicture}[scale=1.0]
    \draw[fill=black] \foreach \x in {1,2,...,4} {(\x,3) circle (2pt)};
    \draw {(1,3) node[label=below:$s$]{}
      (2.5,3) node[label=above:$5$]{}
      (2,3) node[label=below:$t$]{}
      [-] (1,3) -- (4,3)};
  \end{tikzpicture}
&
\quad  \quad \begin{tikzpicture}[scale=1.0]
\draw[fill=black] \foreach \x in {1,2,3} {(\x,6.5) circle (2pt)};\draw[fill=black] (2,7.5) circle (2pt);
\draw{
(2,7) node[label=right:$\!\!5$]{}
%(2.5,6.5) node[label=above:$\!\!5$]{}
[-] (1,6.5) -- (3,6.5)
[-] (2,6.5) -- (2,7.5)
(1,6.5) node[label=below:$s$]{}
(2,6.5) node[label=below:$t$]{}};
\end{tikzpicture}\\
\\
\begin{tikzpicture}[scale=1.0]
    \draw[fill=black] \foreach \x in {1,2,...,5} {(\x,3) circle (2pt)};
    \draw {(1,3) node[label=below:$s$]{}
      (2.5,3) node[label=above:$5$]{}
      (2,3) node[label=below:$t$]{}
      [-] (1,3) -- (5,3)};
  \end{tikzpicture}
&
\quad  \quad 
\begin{tikzpicture}[scale=1.0]
\draw[fill=black] \foreach \x in {1,2,3,4} {(\x,6.5) circle (2pt)};\draw[fill=black] (2,7.5) circle (2pt);
\draw{
(2,7) node[label=right:$\!\!5$]{}
[-] (1,6.5) -- (4,6.5)
[-] (2,6.5) -- (2,7.5)
(1,6.5) node[label=below:$s$]{}
(2,6.5) node[label=below:$t$]{}};
\end{tikzpicture}
\\
\begin{tikzpicture}[scale=1.0]
    \draw[fill=black] \foreach \x in {1,2,...,5} {(\x,3) circle (2pt)};
    \draw {(1,3) node[label=below:$s$]{}
      (4.5,3) node[label=above:$5$]{}
      (2,3) node[label=below:$t$]{}
      [-] (1,3) -- (5,3)};
  \end{tikzpicture}
&
\quad  \quad 
\begin{tikzpicture}[scale=1.0]
\draw[fill=black] \foreach \x in {1,2,3,4} {(\x,6.5) circle (2pt)};\draw[fill=black] (2,7.5) circle (2pt);
\draw{
(3.5,6.5) node[label=above:$5$]{}
[-] (1,6.5) -- (4,6.5)
[-] (2,6.5) -- (2,7.5)
(1,6.5) node[label=below:$s$]{}
(2,6.5) node[label=below:$t$]{}};
\end{tikzpicture}\\
\end{tabular}
\caption{The last remaining obstructions to Theorem~\ref{thm:largebands}.}
\label{fig:lastcase}
\end{figure}
For each of these six Coxeter graphs, we may assume that $s$ and $t$
are the indicated vertices. (Note that any other choice would result
in either an isomorphic copy of $W'$ or an infinite group.) These six
graphs fall into two cases. In the top four graphs, $t$ is involved in
a strength $5$ bond, and so the uniqueness of the occurrence of $t$
forces $w'$ to be a Coxeter element (of $H_3$ or $H_4$) because we
have $j=1$ in Lemma~\ref{lemma:cfch}(ii). In the bottom two graphs, $t$ is
not involved in a strength 5 bond, so $w'$ has a large band if and only
if $w$ does, and by Lemma~\ref{lemma:cfch}(iii), $w'$ is a Coxeter
element. In either case, it follows that $w$ is also a Coxeter
element, and hence $w$ is logarithmic. %\qed
\end{proof}
\begin{cor}
  \label{cor:baby-RHD}
  Let $(W,S)$ be a Coxeter system without large odd endpoints. An
  element $w\in\CFC(W)$ is logarithmic if and only if it is
  torsion-free.
\end{cor}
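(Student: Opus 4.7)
The plan is: handle the forward direction via Proposition~\ref{prop:logarithmic}, and handle the reverse direction by reducing to Theorem~\ref{thm:largebands} through a torsion-free decomposition.

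For the reverse direction, I would assume $w \in \CFC(W)$ is torsion-free and decompose it as follows. Since $W_{\supp(w)}$ has no finite direct factors, $\Gamma_{\supp(w)}$ splits into connected components, each generating an infinite irreducible parabolic $W_i'$. Writing $w = w_1 \cdots w_k$ with $w_i \in W_i'$, the factors are pairwise commuting CFC elements, each of full support in its respective parabolic, and lengths add across them: $\ell(w^n) = \sum_{i=1}^k \ell(w_i^n)$. So it suffices to show each $w_i$ is logarithmic in $W_i'$.

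For each $w_i$, since it has full support in $W_i'$, the parabolic $W_{\supp(w_i)}$ equals $W_i'$, so any band of $w_i$ is controlled by an odd endpoint of $W_i'$ itself. The next step is to argue that $W_i'$ inherits the ``no large odd endpoints'' property from $W$; then $w_i$ has no large bands, and Theorem~\ref{thm:largebands} finishes the job.

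The main obstacle is this inheritance claim: an endpoint of $\Gamma_i$ might a priori come from a vertex of $\Gamma$ of higher degree whose remaining neighbors land outside $\supp(w_i)$, and such a vertex carrying an odd edge of weight $\geq 5$ need not violate the hypothesis on $W$ itself. For the classes of primary interest — affine Weyl groups and simply-laced Coxeter groups — this concern is vacuous because no odd bond of weight $\geq 5$ occurs anywhere in $\Gamma$, so none can occur in any subgraph $\Gamma_i$ either, and the descent of the hypothesis is automatic. Outside those families, one would either need to argue this descent directly or invoke additional structural information about the ``extra'' neighbors outside $\supp(w_i)$.
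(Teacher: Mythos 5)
Your argument is the same as the paper's: the forward implication is Proposition~\ref{prop:logarithmic}, and the converse is the reduction to the irreducible components of $W_{\supp(w)}$ followed by an application of Theorem~\ref{thm:largebands}. The step you leave open is exactly the step the paper settles by assertion: after ``we may assume $w$ has full support and $W$ is irreducible,'' the paper's proof simply states ``since $W$ has no large odd endpoints, $w$ has no large bands,'' which tacitly assumes that the no-large-odd-endpoints hypothesis passes from $\Gamma$ to the support subgraph (recall that bands are defined relative to the Coxeter system generated by $\supp(w)$). So you have not diverged from the paper; you have merely flagged, rather than suppressed, the one delicate point.

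Your hesitation is in fact well founded, because the descent is not automatic. Let $\Gamma$ be the path on $u,s_1,s_2,s_3,s_4,s_5$ with $m(s_1,s_2)=5$ and all other bond strengths $3$: its only endpoints are $u$ and $s_5$, both of weight $3$, so $W$ has no large odd endpoints. Yet $w=s_1s_2s_1s_2s_3s_4s_5$ is CFC (one checks directly that every cyclic shift is FC), its support generates an infinite parabolic of type $H_5$, so $w$ is torsion-free, and $s_1$ is a large odd endpoint of that support system; indeed $w$ has a large $s_1s_2$-band, so Theorem~\ref{thm:largebands} does not apply to it. For such configurations one needs either a separate argument for these supports or an affirmative answer to the question --- left open in the paper's concluding section --- of whether torsion-free CFC elements with large bands are logarithmic; the paper's one-sentence justification glosses over precisely the point you raise. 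For the classes the corollary is aimed at, affine Weyl groups and simply-laced groups, no bond of odd weight at least $5$ occurs anywhere in $\Gamma$, so the issue is vacuous and your argument, like the paper's, is complete there, exactly as you observe.
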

\begin{proof}
  The forward direction is handled by
  Proposition~\ref{prop:logarithmic}. For the converse, let $w$ be
  torsion-free with reduced expression $\w$. We may assume it has full
  support and $W$ is irreducible. Since $W$ has no large odd
  endpoints, $w$ has no large bands, and hence is logarithmic by
  Theorem~\ref{thm:largebands}. %\qed
\end{proof}

The class of Coxeter groups without large odd endpoints includes all
affine Weyl groups and simply-laced Coxeter groups. In fact, we can
say even more about CFC elements in affine Weyl groups. The following
corollary says that the only logarithmic CFC elements with bands in an
affine Weyl group are the Coxeter elements.

\begin{cor}
  \label{cor:affMUL2}
  Let $W$ be an affine Weyl group, and $\w$ a reduced expression for
  $w\in\CFC(W)$ with full support. Then $w$ is logarithmic and either
  \begin{enumerate}[label=(\roman*)]
  \item $w$ is a Coxeter element, or
  \item $\w^k\in\FC(W)$ for all $k \in \N$.
  \end{enumerate}
\end{cor}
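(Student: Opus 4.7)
The plan is to dispatch the two claims of the corollary separately. For logarithmicity, I use that $W$ is infinite and irreducible, so the full-support hypothesis forces $w$ to be torsion-free. Affine Weyl groups admit only bond strengths $3$, $4$, $6$, and $\infty$, none of which is large and odd, so $W$ contains no large odd endpoints. Applying Corollary~\ref{cor:baby-RHD} then yields that $w$ is logarithmic. For the dichotomy, I prove the contrapositive: if $\w^k \notin \FC(W)$ for some $k$, then $w$ is a Coxeter element.

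So suppose (ii) fails. By Theorem~\ref{thm:bands}, $w$ has a band, and by the absence of large odd endpoints in $W$ this band must be small. Without loss of generality, $w$ has an $st$-band with $s$ the endpoint, $m(s,t)=3$, and $s$, $t$ each appearing exactly once in $\w$. Corollary~\ref{cor:remove-s} then allows me to delete the unique occurrence of $s$ from $\w$, producing a reduced expression $\w'$ for some $w' \in \CFC(W')$ with no large bands, where $W' = W_{S\setminus\{s\}}$. A defining feature of affine Weyl groups is that every proper standard parabolic subgroup is finite, so $W'$ is finite; its bond strengths are inherited from $W$, hence they lie in $\{3,4,6\}$ and $W'$ still has no large odd endpoints.

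It remains to show that $w'$ is a Coxeter element of $W'$, for then $w$ (which has $s$ once and every generator of $W'$ occurring the same number of times in $\w'$ as in $\w$) will be a Coxeter element of $W$. I apply Corollary~\ref{cor:CFCelements} to each irreducible factor of $W'$. Because the bond strengths of $W'$ lie in $\{3,4,6\}$, the only way the corollary can permit a repeated generator is through a strength-$6$ bond, and this arises only when $W = \widetilde{G}_2$, in which case $W' \cong I_2(6)$ has its unique strength-$6$ bond joining $t$ to the remaining generator. Since $t$ occurs exactly once in $\w'$, the ``$j$ times each'' clause of Corollary~\ref{cor:CFCelements} forces $j=1$, so no generator of $W'$ actually repeats. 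Therefore $w'$ is a Coxeter element of $W'$, and the argument is complete. The rank-$2$ case $W = \widetilde{A}_1$ is handled separately: no odd endpoints exist, so $w$ cannot have a band, and (ii) automatically holds.

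The main obstacle is the lone exceptional case of $\widetilde{G}_2$, where a strength-$6$ bond survives after removing the endpoint; without the observation that $t$ is constrained to appear exactly once in $\w'$, Corollary~\ref{cor:CFCelements} would permit non-Coxeter CFC elements of $I_2(6)$ and undermine the reduction.
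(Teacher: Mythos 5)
Your proof is correct and takes essentially the same route as the paper: the published proof of Corollary~\ref{cor:affMUL2} simply reruns the argument of Theorem~\ref{thm:largebands} (any band is small, delete the endpoint via Corollary~\ref{cor:remove-s}, then invoke Corollary~\ref{cor:CFCelements}), using exactly your key fact that every proper parabolic subgroup of an affine Weyl group is finite. Your explicit handling of the $\widetilde{G}_2$ case, where $W'\cong I_2(6)$ and the unique occurrence of $t$ forces $w'$ to be a Coxeter element, corresponds to the rank-three base case already present in the paper's proof of Theorem~\ref{thm:largebands}.
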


\begin{proof}
  Since $W$ is an affine Weyl group, each $m(s, t)\in\{1, 2, 3, 4, 6,
  \infty\}$, which means that $W$ has no large odd endpoints, and none
  of its CFC elements have large bands. The proof of
  Theorem~\ref{thm:largebands} carries through, except that the only
  situation where (i) and (ii) do not occur is the case where it is
  possible to remove an element of $S$ and still be left with an
  infinite Coxeter group. The proof follows from a well-known (and
  easily checked) property of affine Weyl groups, which is that all of
  their proper parabolic subgroups are finite. %\qed
\end{proof}

\begin{example}\label{ex:ae}
  Here are some examples of CFC elements in affine Weyl groups, and
  what our results tell us about their properties.
 \begin{enumerate}[label=(\roman*)]
  \item Consider the affine Weyl group of type $\widetilde{A}_n$,
    for $n\geq 2$.  The corresponding Coxeter graph is an $(n+1)$-gon,
    all of whose edges have bond strength three.  Let $c$ be a Coxeter
    element of $W(\widetilde{A}_n)$.  Then $c$ is CFC, and is
    logarithmic by Theorem~\ref{thm:speyer}.  Since $\widetilde{A}_n$
    has no endpoints and $c$ has full support, $c$ cannot have any
    bands. By Theorem~\ref{thm:bands}, $c^k$ is FC for all $k$, and
    now we can use Proposition~\ref{prop:w^k} to deduce that $c^k$ is CFC
    for all $k$.
  \item Consider the affine Weyl group of type
    $\widetilde{E}_8$, or in other words, type $E_9$, and let $c$ be a
    Coxeter element of $W(\widetilde{E}_8)$.  Again, by
    Theorem~\ref{thm:speyer}, $c$ is logarithmic.  However,
    $\widetilde{E}_8$ is FC-finite, so it cannot be the case that
    $c^k$ is FC (and hence CFC) for all $k$. By Lemma~\ref{lem:w^2},
    $c^2$ is not FC, and by Theorem~\ref{thm:bands}, $c$ must have a
    band. 
  \item Recall from Example~\ref{ex:ex1}\ref{ex:affine-E6} that
    $w=s_1s_3s_2s_4s_3s_5s_4s_6s_0s_3s_2s_6$ is a CFC element in the
    affine Weyl group of type $\widetilde{E}_6$.  Though the Coxeter
    graph has three odd endpoints, $w$ has no bands, which is easily
    verified from the observation that each generator adjacent to an
    endpoint occurs twice in $w$. By Theorem~\ref{thm:bands}, $w^k$ is
    FC for all $k$, and by Proposition~\ref{prop:w^k}, $w^k$ is CFC
    for all $k$.
  \item As in Example~\ref{ex:ex1}\ref{ex:affine-C4}, let
    $w_{1}=s_{0}s_{2}s_{4}s_{1}s_{3}$ and
    $w_{2}=s_{0}s_{1}s_{2}s_{3}s_{4}s_{3}s_{2}s_{1}$ be elements in
    $W(\widetilde{C}_{4})$. Since $w_{1}$ and $w_{2}$ are CFC elements
    with full support, by Corollary~\ref{cor:baby-RHD}, both are
    logarithmic.  Moreover, since $W(\widetilde{C}_4)$ has no odd
    endpoints, CFC elements with full support in
    $W(\widetilde{C}_{4})$ have no bands, so powers of $w_{1}$ and
    $w_{2}$ remain FC (Theorem~\ref{thm:bands}), and CFC
    (Proposition~\ref{prop:w^k}).

\end{enumerate}
\end{example}

%%===================================================
\section{Conclusions and Future Work}\label{sec:conc}
%%===================================================

Our motivation for defining and studying the CFC elements arose from
recent work on Coxeter elements described in Section~\ref{sec:cfc}, in
which the source-to-sink operation arose. It seemed that certain
properties of Coxeter elements were not due to the fact that every
generator appears once, but rather that conjugation is described
combinatorially by this source-to-sink operation. Thus, CFC elements
seemed like the natural generalization, because they are the largest
class of elements for which the source-to-sink operation
extends. Indeed, we showed that for any CFC element $w$ (without large
bands), $w$ is logarithmic iff $w$ is torsion-free. This generalizes
Speyer's recent result that says the same for the special case of
Coxeter elements. If the source-to-sink operation is indeed crucial to
this logarithmic property, then there should be a simple example of a
cyclically reduced non-CFC element that fails to be logarithmic. The
following example of this was pointed out recently by
M.~Dyer~\cite{Dyer:11}, where $W$ is the affine Weyl group
$\widetilde{C}_2$, and $w$ the following non-CFC element:
\[
  \centering
  \begin{tikzpicture}[scale=1.0]
    \draw[fill=black] \foreach \x in {1,2,3} {(\x,3) circle (2pt)};
    \draw {(0,3) node{$\widetilde{C}_2$}
      (1,3) node[label=below:$s_0$]{}
      (1.5,3) node[label=above:$4$]{}
      (2.5,3) node[label=above:$4$]{}
      (2,3) node[label=below:$s_1$]{}
      (3,3) node[label=below:$s_2$]{}
      [-] (1,3) -- (3,3)
      (6,3) node[label=right:${w=s_0s_1s_0s_1s_2.}$]{}
    };
  \end{tikzpicture}
\]
Clearly, $w$ is cyclically reduced and torsion-free, but
\[
w^2=(s_0s_1s_0s_1s_0)(s_2s_1s_0s_1s_2)=
(s_1s_0s_1s_0s_0)(s_2s_1s_0s_1s_2)=(s_1s_0s_1)(s_2s_1s_0s_1s_2),
\]
and so $\ell(w^2)<2\ell(w)$. Obviously, such a counterexample works
for any $m(s_1,s_2)\geq 4$. Thus, being cyclically reduced and
torsion-free together are \emph{not} sufficient for a non-CFC element
to be logarithmic. So, what are the necessary and sufficient
conditions for an arbitrary element in a Coxeter group to be
logarithmic? In this paper, we formalized the root automaton of a
Coxeter group in a new way, and it led to a new technique for proving
reducibility. We expect this approach to be useful for other questions
about reducibility. However, new geometric tools would need to be
developed to attack this general question for non-CFC
elements. In~\cite{Krammer:09}, D.~Krammer defines the ``axis'' of an
element, which generalizes the property of being logarithmic (which
Krammer calls \emph{straight}). Krammer proves some results on the
axis, but does not use these to draw conclusions about combinatorial
properties of logarithmic elements. We do not know yet whether these
techniques will help, but it remains a possibility.

Another natural question is whether torsion-free CFC elements with
large bands are necessarily logarithmic. Consider the following sets of
elements shown below.
\[
\left\{\begin{array}{c} \mbox{Coxeter} \\ \mbox{elements}
\end{array}\right\}\;\,\subset\,\;
\left\{\begin{array}{c}
  \mbox{CFC elements} \\
  \mbox{w/o large bands}
\end{array}\right\}\;\,\subset\,\;
\left\{\begin{array}{c}
  \mbox{CFC elements}
\end{array}\right\}\;\,\subset\,\;
\left\{\begin{array}{c}
  \mbox{cyclically reduced} \\
    \mbox{elements}
\end{array}\right\}
\]
The source-to-sink operation holds for these first three sets, but
breaks down for the fourth. Being torsion-free implies being
logarithmic for elements in the first two sets, but not for elements
in the fourth. Is it also sufficient for elements in the third set?
If so, that would imply that in any Coxeter group, a CFC element is
logarithmic if and only if it is torsion-free (recall that in
Corollary~\ref{cor:baby-RHD}, we proved that this is true for all Coxeter
groups without large odd endpoints), and this would give even more
evidence that the combinatorics behind the source-to-sink operation is
governing the logarithmic property. It is tempting to conjecture this
for purely aesthetic reasons, and it may in fact be true. However, we
do not have any firm mathematical evidence.

As mentioned earlier, we expect that these results will be useful in
better understanding the conjugacy problem in Coxeter groups. Since
the logarithmic property was key to establishing the cyclic version of
Matsumoto's theorem (as mentioned in the introduction) for Coxeter
elements, we expect that it will be necessary for CFC elements. We
conjecture that the cyclic version of Matsumoto's theorem holds for at
least the CFC elements (and likely much more), and once again, the
combinatorial techniques involving the source-to-sink operation should
play a central role. But does it hold for general torsion-free
cyclically reduced elements? If there is a counter-example, it is
certainly not obvious. In the meantime, progress towards this goal
should lead to valuable new developments in the combinatorial
understanding of reducibility and conjugacy. Understanding any
obstacles to this conjecture would also be of considerable interest,
and even if it were shown to be false, understanding when it fails
(and proving a modified version) would surely bring new
insight.

%% =================================================================

%\medskip\noindent
\subsection*{Acknowledgments}
  We thank Sara Billey, Hugh Denoncourt, Matthew Dyer, and Nathaniel
  Thiem for helpful comments.

%\bibliography{../coxeter}
%\bibliography{./bibliography/coxeter}
%\bibliographystyle{abbrv}
\bibliographystyle{amsplain}

\end{document}